\documentclass{scrartcl}

\usepackage{lmodern}
\usepackage[T1]{fontenc}
\usepackage{microtype}
\usepackage[english]{babel}
\usepackage{csquotes}

\usepackage[backend=biber,style=alphabetic,url=false,giveninits=true,sorting=anyt]{biblatex}
\usepackage{amsmath,amssymb,amsthm}
\usepackage{mathtools,thmtools}
\usepackage{mathrsfs}

\usepackage{hyperref}
\hypersetup{
colorlinks=true,
linkcolor=blue!50!black,
citecolor=green!40!black,
urlcolor=blue!50!black
}
\usepackage[shortlabels]{enumitem}
\usepackage[nameinlink]{cleveref}
\crefname{equation}{}{}
\crefname{enumi}{}{}

\usepackage[svgnames]{xcolor}

\newcommand{\cQ}{\mathscr{Q}}
\newcommand{\cE}{\mathscr{E}}
\newcommand{\X}{\mathbf{X}}
\newcommand{\N}{\mathbb{N}}
\newcommand{\R}{\mathbb{R}}

\DeclareMathOperator{\lin}{lin}

\newcommand{\eps}{\varepsilon}
\newcommand{\genrel}{\mathrel{\mathcal R}}

\DeclarePairedDelimiter\abs{\lvert}{\rvert}
\DeclarePairedDelimiter\norm{\lVert}{\rVert}
\DeclarePairedDelimiterX{\pair}[2]{\langle}{\rangle}{#1, #2}

\DeclareMathOperator{\clspn}{\overline{span}}
\DeclareMathOperator{\argmin}{arg\,min}

\DeclareMathOperator{\dom}{dom}
\DeclareMathOperator{\supp}{supp}
\DeclareMathOperator{\cc}{cone}
\DeclareMathOperator{\cl}{cl}
\DeclareMathOperator{\clcc}{\overline{cone}}
\DeclareMathOperator{\relint}{ri}
\newcommand{\nop}{\mathrm{N}}
\DeclareMathOperator{\closedconv}{\overline{conv}}

\numberwithin{equation}{section}

\declaretheorem[name=Theorem,within=section]{thm}
\declaretheorem[name=Theorem]{mainthm}

\declaretheorem[name=Corollary,sibling=mainthm]{maincor}

\declaretheorem[name=Lemma,numberlike=thm]{lem}
\declaretheorem[name=Proposition,numberlike=thm]{prp}
\declaretheorem[name=Corollary,numberlike=thm]{cor}

\declaretheorem[name=Definition,numberlike=thm,style=definition]{dfn}

\declaretheorem[name=Remark,numberlike=thm,style=remark]{rmk}
\declaretheorem[name=Example,numberlike=thm,style=remark]{xmp}

\title{The quasi-convex Rockafellar theorem}
\author{Luigi De Pascale\thanks{Dipartimento di Matematica e Informatica, Università di Firenze, Firenze, Italy. Email: \texttt{luigi.depascale@unifi.it}}, Paul Pegon\thanks{CEREMADE, Université Paris--Dauphine, Université PSL, CNRS \& MOKAPLAN, Inria Paris, 75016 Paris, France. Email: \texttt{pegon@ceremade.dauphine.fr}}}
\date{\today}

\begin{document}
\maketitle
\begin{abstract}
	Rockafellar showed that every cyclically monotone operator is contained in the subdifferential of a convex function, and characterized the maximal ones. We establish the quasi-convex analogue of this theorem, with no assumption on the operator: in any separable Banach space, a setting we show to be sharp, every cyclically quasi-monotone operator is contained in the normal cone operator of a quasi-convex function. In finite dimension, we characterize maximal cyclically quasi-monotone operators and show that they are maximal quasi-monotone. Unlike in the convex case, the potential is not unique, and we build it from a preference relation obtained through successive preorders. As applications, we extend Afriat's theorem on revealed preferences in economics to infinite data sets, and obtain Kantorovich-type potentials for a supremal optimal transport problem.
\end{abstract}

{\small
\noindent\textit{Keywords:} Quasi-convex functions, cyclical quasi-monotonicity,
quasi-monotone operators, revealed preference, $L^\infty$ optimal transport,
cyclical monotonicity, generalized convexity, Afriat's theorem, normal cone,
utility representation.

\smallskip
\noindent\textit{MSC 2020:} Primary 26B25, 47H05; Secondary 49J52, 49Q22, 91B08, 91B16.
\par}

\hypersetup{
pdftitle={The quasi-convex Rockafellar theorem},
pdfkeywords={quasi-convex functions, cyclical quasi-monotonicity,
quasi-monotone operators, revealed preference, L-infinity optimal transport,
cyclical monotonicity, generalized convexity, Afriat's theorem,
normal cone, utility representation}
}

%

\tableofcontents

\section{Introduction}

A classical result of convex analysis, due to Rockafellar \cite{rockafellarCharacterizationSubdifferentialsConvex1966,rockafellarMaximalMonotonicitySubdifferential1970}, states that an operator is contained in the subdifferential of a convex function if and only if it is cyclically monotone, that the maximal cyclically monotone operators are exactly the subdifferentials of proper lower semicontinuous convex functions, and that these are moreover maximal monotone. Quasi-convex functions, i.e., functions with convex sublevel sets, are a natural generalization of convex functions when only the ordering of values matters, as for utility functions in economics, or for supremal functionals in the calculus of variations. Their first-order behavior is partly described by the normal cones to their sublevel sets, which provide the direction of steepest ascent but not the slope, and the natural counterpart of cyclical monotonicity is cyclical quasi-monotonicity.

Whether Rockafellar's theorem has a quasi-convex analogue has been investigated by several authors, in generalized convex analysis as well as in revealed preference theory \cite{daniilidisSubdifferentialsQuasiconvexPseudoconvex1999,bachirLowerSubdifferentiabilityIntegration2002,crouzeixConvexLevelSets2016,renyCharacterizationRationalizableConsumer2015,depascaleRockafellarTheoremCyclically2025}, but existing integration results require strong assumptions on the operator, such as regularity, nondegeneracy, or the existence of an orienting direction. In this paper, we prove the quasi-convex Rockafellar theorem in full generality: in any separable Banach space, a setting we show to be sharp, every cyclically quasi-monotone operator is contained in the normal cone operator of a lower semicontinuous quasi-convex function. In finite dimension, we also show that the maximal cyclically quasi-monotone operators are exactly the normal cone operators of a natural class of lower semicontinuous quasi-convex functions (see \Cref{main_thm}), and that these operators are moreover maximal quasi-monotone. The main difficulty is the lack of uniqueness: unlike in the convex case, a maximal operator does not determine such a function, even up to composition with an increasing function, so that a choice has to be made. Our strategy relies on the construction of four successive preorders associated with the operator, the last one being a lower semicontinuous quasi-convex preference relation from which the function is built.

As applications, we extend Afriat's theorem in revealed preference theory to possibly infinite data sets under an axiom weaker than the Generalized Axiom of Revealed Preference (GARP), which we call the Quasi Axiom of Revealed Preference (QARP), and, for the supremal optimal transport problem whose cost is the negative inner product $-\pair{x}{y}$, we obtain a nontrivial Kantorovich-type potential, which prescribes the direction of transport.

\paragraph{Setting and definitions}
We work in a real Banach space $\X$, with topological dual $\X^*$ consisting of all continuous linear forms on $\X$, and we denote by $\pair{x^*}{x}$ the duality pairing between any $x^* \in \X^*$ and $x \in \X$. A set-valued\footnote{We shall often drop the adjective \enquote{set-valued} for brevity.} operator $T \colon \X \multimap \X^*$ is a map from $\X$ to $2^{\X^*}$, which we identify with its graph, so that $T \subseteq S$ means $T(x) \subseteq S(x)$ for every $x \in \X$, in which case $S$ is called an \emph{extension} of $T$. An operator is \emph{maximal} in a given class (monotone, cyclically monotone, quasi-monotone, cyclically quasi-monotone operators, etc.) if it admits no extension in this class other than itself.

We recall that $T$ is \emph{cyclically monotone} if for every sequence of points $x_0, \ldots, x_N \in \X$ satisfying $x_N = x_0$, and every $x_0^* \in T(x_0), \ldots, x_{N-1}^* \in T(x_{N-1})$, the following inequality holds:
\begin{equation}\tag{CM}\label{cyclical_monotonicity}
	\sum_{i=0}^{N-1} \pair{x^*_i}{x_{i+1}-x_i} \leq 0.
\end{equation}
If this inequality is only required for $N = 2$, then $T$ is said to be \emph{monotone}. With these notions at hand, we may recall Rockafellar's theorem, in its complete form.

\begin{thm}[Rockafellar \cite{rockafellarCharacterizationSubdifferentialsConvex1966,rockafellarMaximalMonotonicitySubdifferential1970}]\label{Rockafellar_Theorem}
	Let $T \colon \X \multimap \X^*$ be a set-valued operator and let $\Gamma_0(\X)$ denote the set of proper lower semicontinuous convex functions $f \colon \X \to \R \cup \{+\infty\}$. The following statements hold.
	\begin{enumerate}
		\item \emph{(Existence)} $T$ is cyclically monotone if and only if $T \subseteq \partial f$ for some $f \in \Gamma_0(\X)$. Such an $f$ is called a convex potential of $T$.
		\item \emph{(Maximal cyclical monotonicity)} $T$ is maximal cyclically monotone if and only if $T = \partial f$ for some $f \in \Gamma_0(\X)$.
		\item \emph{(Uniqueness)} If $\partial f = \partial g$ with $f, g \in \Gamma_0(\X)$, then $f - g$ is constant. Thus, the convex potential of a maximal cyclically monotone operator is unique up to an additive constant.
		\item \emph{(Maximal monotonicity)} For every $f \in \Gamma_0(\X)$, the operator $\partial f$ is maximal monotone.
	\end{enumerate}
\end{thm}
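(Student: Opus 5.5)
The plan is to prove the four items in the order (1), (4), (2), (3), since item (2) follows from (1) and (4), while (3) uses the construction behind (1).

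\emph{Existence.} If $T \subseteq \partial f$, summing the subgradient inequalities $f(x_{i+1}) \geq f(x_i) + \pair{x_i^*}{x_{i+1}-x_i}$ around a cycle gives \eqref{cyclical_monotonicity}; here one must check that $f(x_i) < +\infty$ whenever $\partial f(x_i) \neq \emptyset$. For the converse, assume $T \neq \emptyset$ and fix $(x_0, x_0^*) \in T$. I would define the Rockafellar potential
\begin{equation*}
	f(x) = \sup\Bigl\{ \pair{x_N^*}{x - x_N} + \sum_{i=0}^{N-1} \pair{x_i^*}{x_{i+1}-x_i} \Bigr\},
\end{equation*}
the supremum running over all finite chains $(x_1,x_1^*), \ldots, (x_N,x_N^*) \in T$. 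Each term is a continuous affine function of $x$, so $f$ is convex and lower semicontinuous. Cyclical monotonicity gives $f(x_0) \leq 0$, and the trivial chain gives $f(x_0) \geq 0$, so $f$ is proper. For $(y, y^*) \in T$, appending $(y,y^*)$ to a chain yields $f(x) \geq \pair{y^*}{x-y} + (\text{chain value at } y)$. Taking the supremum over chains gives $f(x) \geq f(y) + \pair{y^*}{x-y}$, and this also shows $f(y) < +\infty$ by evaluating at $x = x_0$. Hence $y^* \in \partial f(y)$.

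\emph{Maximal monotonicity.} This is the main obstacle, because it needs genuine Banach space geometry. I would follow the Simons--Zălinescu or Borwein style argument. Let $(z, z^*)$ be monotonically related to $\partial f$; after translating $f$ by $z$ and tilting it by $-z^*$, assume $z = 0$ and $z^* = 0$, so that $\pair{x^*}{x} \geq 0$ for all $(x,x^*) \in \partial f$. We must show $0 \in \partial f(0)$, i.e.\ that $0$ minimizes $f$. Apply the Brøndsted--Rockafellar theorem, or Ekeland's variational principle with a sum rule, to $f + \tfrac12\norm{\cdot}^2$. This yields points $x_n$ and $x_n^* \in \partial f(x_n)$ with $-x_n^* \in J(x_n) + \eps_n B^*$ approximately, where $J$ is the duality map. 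Then $\pair{x_n^*}{x_n} \approx -\norm{x_n}^2 \leq 0$, so monotonicity forces $x_n \to 0$ and $x_n^* \to 0$. Lower semicontinuity together with the subgradient inequalities then give $f(y) \geq f(0)$ for all $y$. The delicate part is making the sum rule for $f + \tfrac12\norm{\cdot}^2$ rigorous without reflexivity. For that I would use the exact Moreau--Rockafellar sum rule, which holds because $\tfrac12\norm{\cdot}^2$ is continuous, applied at an approximate minimizer produced by Brøndsted--Rockafellar.

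\emph{Maximal cyclical monotonicity.} If $T = \partial f$, then $T$ is cyclically monotone by (1). Any cyclically monotone extension is monotone, hence equals $\partial f$ by (4). Conversely, if $T$ is maximal cyclically monotone, then (1) gives $T \subseteq \partial f$, and $\partial f$ is cyclically monotone, so maximality gives $T = \partial f$.

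\emph{Uniqueness.} Let $\partial f = \partial g$ with $f, g \in \Gamma_0(\X)$. Since $\partial f$ is maximal cyclically monotone, the potential $h$ built from the chain construction in (1), applied to $T = \partial f$ with a base point $x_0 \in \dom \partial f$, satisfies $h \leq f - f(x_0)$. Indeed, summing the subgradient inequalities along a chain bounds each chain value by $f(x) - f(x_0)$.

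For the reverse inequality I would use density: by Brøndsted--Rockafellar, $\dom \partial f$ is dense in $\dom f$, and $f$ is recovered as the lower semicontinuous closure of its values on $\dom \partial f$ along the graph. Concretely, for $x \in \dom f$ pick $x_n \to x$ with subgradients satisfying $f(x_n) \to f(x)$. Then $f(x_n) - f(x_0)$ is approximated by chain sums obtained by discretizing a path from $x_0$ to $x_n$, using the subdifferential mean value and integration-along-segments estimates. This gives $f - f(x_0) = h$. The same computation applies to $g$, so $f - f(x_0) = g - g(x_0)$, and therefore $f - g$ is constant.
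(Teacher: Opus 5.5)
The paper does not prove this theorem. It quotes it from Rockafellar as background, so your proposal has to stand on its own.

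Items (1), (2) and (4) are sound.
\begin{itemize}
\item (1) is Rockafellar's chain construction, and it is complete.
\item Your argument for (4) is the standard short proof and is correct. You normalize to $(z,z^*)=(0,0)$ and apply Brøndsted--Rockafellar to $f+\frac12\norm{\cdot}^2$, which is bounded below because $f$ has a continuous affine minorant. Splitting the exact subgradient with the Moreau--Rockafellar sum rule gives $0\le\pair{x_n^*}{x_n}\le-\norm{x_n}^2+\eps_n\norm{x_n}$, hence $\norm{x_n}\le\eps_n$ and $\norm{x_n^*}\le 2\eps_n$. Lower semicontinuity then concludes.
\item (2) follows from (1) and (4) exactly as you say.
\end{itemize}

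Item (3) has a genuine gap. The inequality $h\le f-f(x_0)$ is trivial; the whole content of uniqueness is the reverse inequality, and the discretization sketch does not prove it.

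A Riemann-sum argument needs nodes $y_i$ on the path carrying subgradients $y_i^*\in\partial f(y_i)$ with $\pair{y_i^*}{x_n-x_0}$ close to the one-sided derivative. But $\dom\partial f$ is only dense, and it can miss an entire open segment joining two of its points. Already in $\R^2$, take $f(x,y)=-\sqrt{y\,x(1-x)}$ on $[0,1]^2$ and $+\infty$ elsewhere. It is convex and lower semicontinuous, has subgradients at $(0,0)$ and $(1,0)$, and has none at $(t,0)$ for $0<t<1$.

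In finite dimension this is repairable: integrate along segments in $\relint\dom f$, where $f$ is locally Lipschitz and the max formula holds, then recover boundary values by the closure formula along segments. In a Banach space, $\dom f$ may have empty relative interior, so no such region exists.

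Moving the nodes into $\dom\partial f$ with Brøndsted--Rockafellar does not rescue the argument by itself. The chain sum acquires cross terms $\pair{z_i^*}{(z_{i+1}-y_{i+1})-(z_i-y_i)}$, and nothing bounds $\norm{z_i^*}$. In fact, at $y\notin\dom\partial f$ every element of $\partial_\eps f(y)$ has norm tending to $\infty$ as $\eps\to0$, by weak-$*$ compactness. A Zagrodny-type mean value inequality gives one point satisfying an inequality, not this kind of control.

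Making the argument work requires three additional ingredients:
\begin{itemize}
\item choosing the nodes sequentially, so that the precision of $z_{i+1}$ depends on $\norm{z_i^*}$;
\item using $\eps$-directional derivatives to get the lower bound on $\pair{z_i^*}{x_n-x_0}$;
\item Borwein's refined Brøndsted--Rockafellar theorem, which also controls $f(z_i)-f(y_i)$ and $\pair{z_i^*}{z_i-y_i}$.
\end{itemize}

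Your final step has a related defect. Suppose you had $h(x_n)=f(x_n)-f(x_0)$ with $x_n\to x$ and $f(x_n)\to f(x)$. Lower semicontinuity of $h$ only yields $h(x)\le\liminf_n h(x_n)=f(x)-f(x_0)$, which is the inequality you already had. To get $h(x)\ge f(x)-f(x_0)$ you must append $(x_n,x_n^*)$ to the chain and show $\pair{x_n^*}{x-x_n}\to0$. That is again what Borwein's refinement provides, and it does not follow from mere density of $\dom\partial f$.
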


To state our results, we recall the notions of quasi-convexity and cyclical quasi-monotonicity.

\begin{dfn}[Quasi-convexity, (cyclical) quasi-monotonicity]\label{dfn_qc_cqm}
	\leavevmode
	\begin{enumerate}[(i)]
		\item A function $f \colon \X \to \R \cup \{+\infty\}$ is \emph{quasi-convex} if its sublevel sets $\{x \in \X : f(x) \leq \ell\}$ are convex for every $\ell \in \R$.
		\item A set-valued operator $T \colon \X \multimap \X^*$ is \emph{cyclically quasi-monotone} if for every sequence of points $x_0, \ldots, x_N \in \X$ satisfying $x_N = x_0$, and every $x^*_0 \in T(x_0), \ldots, x^*_{N-1} \in T(x_{N-1})$, the following inequality holds:
		\begin{equation}\tag{CQM}\label{cyclical_quasi_monotonicity}
			\min_{0 \leq i < N} \pair{x^*_i}{x_{i+1}-x_i} \leq 0.
		\end{equation}
		If this inequality is only required for $N = 2$, then $T$ is said to be \emph{quasi-monotone}.
	\end{enumerate}
\end{dfn}

Cyclically quasi-monotone operators arise naturally from quasi-convex functions, through their normal cone operator. Denoting by $A^\circ \coloneqq \{x^* \in \X^* : \pair{x^*}{a} \leq 0 \text{ for every } a \in A\}$ the polar cone of a set $A \subseteq \X$, recall that the \emph{normal cone} to a set $C \subseteq \X$ at $x \in C$ is
\[
	\nop_C(x) \coloneqq (C - x)^\circ = \{x^* \in \X^* : \pair{x^*}{y-x} \leq 0 \text{ for every } y \in C\},
\]
which coincides with the subdifferential $\partial \chi_C(x)$ of the indicator function of $C$, with the convention $\nop_C(x) \coloneqq \emptyset$ if $x \notin C$. The \emph{normal cone operator} of a quasi-convex function $f \colon \X \to \R \cup \{+\infty\}$ is the operator $\nop f \colon \X \multimap \X^*$ defined by
\begin{equation}\label{normal_cone_operator_function}
	\forall x \in \X, \qquad \nop f(x) \coloneqq \nop_{\{f \leq f(x)\}}(x),
\end{equation}
with the convention $\nop f(x) \coloneqq \emptyset$ if $f(x) = +\infty$. The operator $\nop f$ is cyclically quasi-monotone, as a particular case of \Cref{convex_chain_operator_CQM}, which deals more generally with chains of sets (that is, families of sets totally ordered by inclusion).

\paragraph{Main result}

We establish the quasi-convex counterpart of Rockafellar's theorem: convexity is replaced by quasi-convexity, cyclical monotonicity by cyclical quasi-monotonicity, and the subdifferential operator $\partial$ by the normal cone operator $\nop$.

We denote by $\cQ_0(\X)$ the set of real-valued lower semicontinuous quasi-convex functions $f \colon \X \to \R$, and by $\cE_0(\X)$ the subset of functions in $\cQ_0(\X)$ that are \emph{essentially non-satiated}, meaning that every local minimizer is a strict global minimizer (see \Cref{dfn_essential_non_satiation}).

\begin{mainthm}\label{main_thm}
	Let $T \colon \X \multimap \X^*$ be a set-valued operator on a separable Banach space $\X$. The following statement holds.
	\begin{enumerate}[ref=(\arabic*)]
		\item\label{main_thm_existence} \emph{(Existence)} $T$ is cyclically quasi-monotone if and only if $T \subseteq \nop f$ for some $f \in \cQ_0(\X)$. Such an $f$ is called a quasi-convex potential of $T$.
	\end{enumerate}
	Moreover, if $\X$ is finite-dimensional, the following statements also hold.
	\begin{enumerate}[resume, ref=(\arabic*)]
		\item\label{main_thm_maximal_CQM} \emph{(Maximal cyclical quasi-monotonicity)} $T$ is maximal cyclically quasi-monotone if and only if $T = \nop f$ for some $f \in \cE_0(\X)$.
		\item\label{main_thm_nonuniqueness} \emph{(Nonuniqueness)} In general, the quasi-convex potential of a maximal cyclically quasi-monotone operator is not unique, even up to composition with an increasing function.
		\item\label{main_thm_maximal_QM} \emph{(Maximal quasi-monotonicity)} For every $f \in \cE_0(\X)$, the operator $\nop f$ is maximal quasi-monotone.
	\end{enumerate}
\end{mainthm}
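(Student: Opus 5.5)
The plan is to prove the four items in the order listed. The heart of the matter is the "only if" part of \ref{main_thm_existence}; the finite-dimensional items then follow from refinements of that construction.

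\textbf{Easy directions.} If $T \subseteq \nop f$ with $f$ quasi-convex, then $T$ is cyclically quasi-monotone, because $\nop f$ is (the special case of \Cref{convex_chain_operator_CQM} for the chain of sublevel sets). Directly: suppose all $\pair{x_i^*}{x_{i+1}-x_i} > 0$ along a cycle. Since $x_i^*$ is normal to $\{f \le f(x_i)\}$ at $x_i$, this forces $f(x_{i+1}) > f(x_i)$ for every $i$, which is impossible around a cycle.

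\textbf{Existence, converse direction.} Given a cyclically quasi-monotone $T$, I build a preorder on $\X$ in several stages.
\begin{enumerate}
\item Start from the strict revealed relation: $x \prec y$ when $\pair{x^*}{y-x} > 0$ for some $x^* \in T(x)$. Condition \eqref{cyclical_quasi_monotonicity} says precisely that its transitive closure is acyclic, hence a strict partial order.
\item Add the weak relation $x \preceq y$ whenever $\pair{x^*}{y-x} \ge 0$ is the only obstruction, taking care not to create strict cycles.
\item Close the relation so that "sublevel sets" $\{y : y \preceq x\}$ become closed and convex while every $x^* \in T(x)$ stays normal to them.
\item Extend to a total preorder via a Szpilrajn-type extension performed countably many times.
\end{enumerate}
Separability enters here. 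It supplies a countable dense set $D$ and a countable family of halfspaces determining closed convex sets, so the extension can be done by countable induction. The resulting total preorder is lower semicontinuous and quasi-convex, and it is representable by a real-valued lower semicontinuous function through a Debreu-type representation theorem, which needs a countable order-dense subset. One then checks that $T(x) \subseteq \nop_{\{f \le f(x)\}}(x)$.

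\textbf{Main obstacle.} I expect the hardest step to be keeping normality of $T(x)$ at every stage of the closure and convexification while never producing a strict cycle. To handle it, I would define the lower set of $x$ as the smallest closed convex set containing $x$ and all points revealed below $x$. Then I would prove, via a limiting form of \eqref{cyclical_quasi_monotonicity}, that $T(x)$ remains in the polar of (lower set $-\,x$).

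\textbf{Finite dimension.} For \ref{main_thm_maximal_CQM}, start from a maximal $T$ and take the potential $f$ just constructed, so that $T \subseteq \nop f$. Maximality gives $T = \nop f$. To get $f \in \cE_0$, I would modify $f$ on flat regions, using finite-dimensional compactness of normal cones and the fact that a local non-strict minimizer allows enlarging $\nop f$ by a nonzero normal. Conversely, for $f \in \cE_0$, maximality of $\nop f$ follows from \ref{main_thm_maximal_QM}, which is stronger.

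For \ref{main_thm_maximal_QM}, suppose $(x_0,x_0^*) \notin \nop f$. Then some $y$ with $f(y) \le f(x_0)$ has $\pair{x_0^*}{y-x_0} > 0$. Using essential non-satiation and lower semicontinuity in finite dimension, I would find a point $z$ near $y$ and a normal $z^* \in \nop f(z)$ such that the pair $(x_0,x_0^*),(z,z^*)$ violates quasi-monotonicity. To produce $z$, minimize $f$ locally, or take a boundary point of a slightly lower sublevel set, and apply a supporting hyperplane argument.

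For \ref{main_thm_nonuniqueness}, I would give an explicit planar example: an operator whose sublevel chains can be ordered in two incompatible ways. For instance, two disjoint convex regions can be ranked in either order, which yields two potentials in $\cE_0(\R^2)$ with the same normal cone operator but not related by an increasing reparametrization.
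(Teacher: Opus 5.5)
\textbf{Item~\labelcref{main_thm_existence}: genuine gap.} Your easy direction is fine. The hard direction fails at the step you assert without argument: that the total preorder produced by your Szpilrajn-type extension is lower semicontinuous and quasi-convex.

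Totalizing a preorder whose lower contour sets are closed and convex turns them into unions of such sets. For an extension $\preceq'$ of $\preceq_T$, one has $C_{\preceq'}(x)=\bigcup_{x'\preceq' x}C^T(x')$, and such unions are in general neither convex nor closed. Here is a concrete instance.
\begin{itemize}
\item In $\R^2$, let $T(-1,0)=\{(0,1)\}$, $T(0,-1)=\{(1,0)\}$, and $T(x)=\emptyset$ elsewhere.
\item Then $C^T(y)=\{z_1\le 0\}$ if $y_1\le 0<y_2$, and $C^T(y)=\{z_2\le 0\}$ if $y_2\le 0<y_1$.
\item Ranking points by the cardinality of $S^T(y)$ is a total extension of $\preceq_T$ that preserves its strict part.
\item Its lower contour set at rank one is the complement of the open first quadrant, which is not convex.
\end{itemize}
So convexifying \emph{before} totalizing, as in your steps 2--4, cannot work, and the countable induction you describe does not address this.

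The obstacle you single out is not the real one. If $x^*\in T(x)$ and $y\prec_T x$, then $\pair{x^*}{y-x}>0$ would close an ascending cycle. Hence $T(x)\subseteq(S^T(x)-x)^\circ$ directly, with no limiting argument. Moreover, polars do not distinguish a set from its closed convex hull. The actual difficulty is to obtain lower sets that are nested and convex at the same time.

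The paper resolves this as follows:
\begin{itemize}
\item it passes to a maximal cyclically quasi-monotone extension (\Cref{existence_maximal_extension});
\item it proves that for maximal $T$ every union $\bigcup_{x\in A}C^T(x)$ is convex (\Cref{convexity_stable_by_union}, via a G\^ateaux-smooth renorming and Ekeland's principle);
\item it extends $\preceq_T$ by Arrow--Hansson (\Cref{pre-order_extension});
\item it takes closures (\Cref{lsc_regularization}).
\end{itemize}
Separability is used there only to get smoothability and representability (\Cref{cor_rader_separable}). It plays no role in any countable induction.

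Your outline can also be repaired more cheaply by convexifying \emph{after} totalizing. Take any total preorder $\preceq'$ whose strict part contains $\prec_T$; Szpilrajn applied to $\prec_T$ suffices. Then $T\subseteq\nop_{\preceq'}$ by \Cref{normality_strict_part}. The proof of \Cref{lsc_regularization} goes through with $\closedconv$ in place of $\cl$. It yields a lower semicontinuous quasi-convex preference $\preceq$ with $\nop_{\preceq'}\subseteq\nop_\preceq$, because $(A-x)^\circ=(\closedconv A-x)^\circ$.

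\textbf{Finite-dimensional items.} These point in reasonable directions, but their key steps are only asserted.

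In item~\labelcref{main_thm_maximal_CQM}, nothing should be \enquote{modified}. Once maximality gives $T=\nop f$, you must show that this very $f$ is essentially non-satiated. That is what \Cref{strict_sublevel_half_subspace}, \Cref{values_maximal_operators} and \Cref{characterization_essential_non-satiation} do. Your enlargement claim is true for $\nop f$, but the new normal must be attached at a relative interior point $y$ of the flat part of $\{f\le f(x)\}$. There $\nop f(y)$ is a proper subspace, and a functional separating a neighborhood of $y$ from $\{f<f(y)\}$ can be added as in \Cref{F_equals_normal_cone}. At the local minimizer itself this may be impossible. For example, take $f(t)=\min\{t,0\}+\max\{t-1,0\}$: at $t=1$, no new element can be added to $\nop f(1)=[0,+\infty)$ without creating an ascending cycle.

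In item~\labelcref{main_thm_maximal_QM}, your supporting-hyperplane idea can be completed, but the missing ingredient is essential non-satiation, in the form $\relint\{f\le f(x_0)\}\subseteq\{f<f(x_0)\}$ for $x_0\notin\argmin f$. The argument runs as follows.
\begin{itemize}
\item This inclusion yields $\ell<f(x_0)$ and a point $w$ of $\{f\le\ell\}$, interior relative to the affine hull of $\{f\le f(x_0)\}$, with $\pair{x_0^*}{w-x_0}>0$.
\item Let $z$ be the first point of $\{f\le\ell\}$ on $[x_0,w]$, and $z^*$ a supporting functional there that is not constant on $\{f\le\ell\}$.
\item Then $\pair{x_0^*}{z-x_0}>0$ and $\pair{z^*}{x_0-z}>0$, which violates quasi-monotonicity.
\end{itemize}
A plain projection of $x_0$ onto a lower sublevel set gives no control on the sign of $\pair{x_0^*}{z-x_0}$. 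This is a more direct route to what the paper derives from \Cref{lem_equality_cones}.

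In item~\labelcref{main_thm_nonuniqueness}, two disjoint convex regions cannot both be sublevel sets of one function. Your idea is realized instead by the one-dimensional pair $\abs{x}$ and $\max\{-x,2x\}$ of \Cref{example_non_uniqueness}, where the two half-lines are interleaved at different rates.
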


\begin{rmk}[Necessity of separability]\label{rmk_separability}
	The separability assumption, which is absent from Rockafellar's theorem, is necessary: there exists a cyclically quasi-monotone operator on a nonseparable Banach space that admits no potential (\Cref{prp_nonseparable_counterexample}). In a more general setting, that of smoothable Banach spaces (which include all reflexive and all separable ones), every cyclically quasi-monotone operator is contained in the normal cone operator $\nop_\preceq$ of some lower semicontinuous quasi-convex preference relation $\preceq$ (\Cref{thm_chain}, see \labelcref{normal_cone_operator_preference} for the definition of $\nop_\preceq$), but separability is needed to represent such a relation by a real-valued function (\Cref{cor_rader_separable}).
\end{rmk}

\begin{rmk}[Nonuniqueness]\label{rmk_nonuniqueness}
	The failure of uniqueness is severe: even the family of sublevel sets of a potential is not determined by the operator (\Cref{example_non_uniqueness}), as already observed by Crouzeix, Eberhard and Ralph for pseudoconvex functions \cite[Example~6.2]{crouzeixConvexLevelSets2016}. A choice is literally made in the proof: it relies on the axiom of choice through the Arrow--Hansson theorem (\Cref{pre-order_extension}), a generalization of Szpilrajn's theorem, which extends any partial order to a total order, in a non-trivial way.
\end{rmk}

\paragraph{Application to revealed preferences and OT}
A \emph{data set} is a set $\mathcal D \subseteq \R^d \times \R^d$ of pairs $(x,p)$, where $x$ is a chosen bundle of goods and $p$ the corresponding price vector. A utility function $u$ \emph{rationalizes} $\mathcal D$ if $u(y) \leq u(x)$ for every $(x,p) \in \mathcal D$ and every $y$ such that $\pair{p}{y} \leq \pair{p}{x}$. Afriat's theorem reads as follows.

\begin{thm}[Afriat \cite{afriatConstructionUtilityFunctions1967}, Varian \cite{varianNonparametricApproachDemand1982}]\label{thm_afriat}
	Let $\mathcal D = \{(x_i,p_i)\}_{1 \leq i \leq n}$ be a finite data set with $p_i \in \mathopen]0,+\infty\mathclose[^d$. The following assertions are equivalent.
	\begin{enumerate}[(i)]
		\item $\mathcal D$ is rationalized by a locally non-satiated\footnote{A utility function is locally non-satiated if it admits no local maximizer.} utility function, which can moreover be chosen continuous and concave.
		\item $\mathcal D$ satisfies GARP: for every cycle $i_0, \ldots, i_N = i_0$ such that $\pair{p_{i_k}}{x_{i_{k+1}}} \leq \pair{p_{i_k}}{x_{i_k}}$ for every $k < N$, all these inequalities are equalities.
	\end{enumerate}
\end{thm}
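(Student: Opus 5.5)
We would prove Afriat's theorem in the classical way, going around the cycle (i) $\Rightarrow$ (ii) $\Rightarrow$ (i); the only step with real content is the construction of a concave utility.

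\emph{(i) $\Rightarrow$ (ii).} Let $u$ be locally non-satiated and rationalize $\mathcal D$. Take a cycle with $\pair{p_{i_k}}{x_{i_{k+1}}} \leq \pair{p_{i_k}}{x_{i_k}}$ for every $k$. Rationalization gives $u(x_{i_{k+1}}) \leq u(x_{i_k})$ along the cycle, so all the values $u(x_{i_k})$ coincide.

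Suppose some inequality is strict, say $\pair{p_{i_k}}{x_{i_{k+1}}} < \pair{p_{i_k}}{x_{i_k}}$. Then a whole neighborhood $U$ of $x_{i_{k+1}}$ lies in the open half-space $\{\pair{p_{i_k}}{\cdot} < \pair{p_{i_k}}{x_{i_k}}\}$. Rationalization then gives $u \leq u(x_{i_k}) = u(x_{i_{k+1}})$ on $U$, so $x_{i_{k+1}}$ is a local maximizer of $u$. This contradicts local non-satiation, so every inequality is an equality.

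\emph{(ii) $\Rightarrow$ (i).} We follow Varian's constructive route and solve the Afriat inequalities: find $u_i \in \R$ and $\lambda_i > 0$ with
\[
	u_j \leq u_i + \lambda_i \pair{p_i}{x_j - x_i} \qquad \text{for all } i,j.
\]
We then set
\[
	u(y) \coloneqq \min_i \bigl( u_i + \lambda_i \pair{p_i}{y - x_i} \bigr).
\]
This $u$ is a minimum of finitely many affine functions, hence continuous and concave. By the inequalities, $u(x_i) = u_i$ (the $i$-th term equals $u_i$ and no term is smaller).

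\begin{itemize}
	\item \emph{Rationalization.} If $\pair{p_i}{y} \leq \pair{p_i}{x_i}$, then $u(y) \leq u_i + \lambda_i \pair{p_i}{y - x_i} \leq u_i = u(x_i)$.
	\item \emph{Local non-satiation.} Each affine piece is strictly increasing in the direction $(1,\ldots,1)$, because $\lambda_i > 0$ and $p_i$ has positive entries. Hence their minimum is strictly increasing in that direction and has no local maximizer.
\end{itemize}

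\emph{Main obstacle: solving the Afriat inequalities from GARP.} We do this in three steps.
\begin{enumerate}
	\item Define the revealed relation $i \succeq j$ iff $\pair{p_i}{x_j} \leq \pair{p_i}{x_i}$, and take its transitive closure. GARP says exactly that on every cycle of this relation all the defining inequalities are equalities. In particular, within a strongly connected class every edge is an equality edge.
	\item Order the strongly connected classes topologically. Assign values $u_i$ by induction along this order: a single value per class, increasing strictly from class to class.
	\item Choose the multipliers $\lambda_i$ class by class, making them large enough. Large $\lambda_i$ handles every pair $j$ with $\pair{p_i}{x_j - x_i} > 0$. Pairs with $\pair{p_i}{x_j - x_i} < 0$ are strict revealed preferences, and pairs with equality lie in the same class or an earlier one, so the required inequality holds by the choice of values.
\end{enumerate}
This is Varian's algorithm, and we expect the careful bookkeeping of strict versus weak edges across classes to be the main work.

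Alternatively, we could derive the result from \Cref{main_thm}: GARP implies that $\{(x_i,-p_i)\}$ is cyclically quasi-monotone, so \Cref{main_thm} yields a quasi-convex potential $f$ and we take $u = -f$. However, that route gives quasi-concavity and not concavity, so the direct Afriat construction above is the one we would use.
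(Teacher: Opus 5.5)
The paper gives no proof of this statement: it is quoted from Afriat and Varian, and the paper's own \Cref{cor_afriat} only gives a quasi-concave utility that rationalizes in the strict sense, as you observe. Your proof of (i)~$\Rightarrow$~(ii) is correct. So is the reduction of (ii)~$\Rightarrow$~(i) to the Afriat inequalities.

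The gap is in your Step~3, which is the only part with real content. Take a strict revealed preference, $\pair{p_i}{x_j-x_i}<0$. The inequality $u_j \leq u_i+\lambda_i\pair{p_i}{x_j-x_i}$ is then equivalent to $\lambda_i \leq (u_i-u_j)/\abs{\pair{p_i}{x_j-x_i}}$. This is an \emph{upper} bound on $\lambda_i$: making $\lambda_i$ large destroys it, and $u_j<u_i$ alone does not give it.

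Each $\lambda_i$ is therefore squeezed between lower bounds (pairs with positive coefficient) and upper bounds (strict revealed preferences). Whether these are compatible depends on the values, which your Step~2 fixes beforehand without any such control. Concretely, take $d=1$, $p_1=p_2=p_3=1$, $x_1=1$, $x_2=0$, $x_3=2$. GARP holds, and the classes are the singletons $\{3\},\{1\},\{2\}$, ordered from best to worst. The values $u_2=0<u_1=1<u_3=10$ are allowed by Step~2. But the pairs $(1,3)$ and $(1,2)$ force $\lambda_1\geq 9$ and $\lambda_1\leq 1$; indeed, no concave function takes the values $0,1,10$ at $0,1,2$.

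The fix is to choose the values after, and depending on, the multipliers already fixed. Treat the classes from best to worst, each class before every class it is revealed preferred to. When a class $K$ is added, no $k\in K$ is directly revealed preferred to an already treated $j$, so $\pair{p_k}{x_j-x_k}>0$. Inside $K$, GARP gives $\pair{p_k}{x_{k'}-x_k}\geq 0$ for $k,k'\in K$. Give all of $K$ the common value
\[U_K \coloneqq \min_{k\in K}\ \min_{j \text{ treated}} \bigl(u_j+\lambda_j\pair{p_j}{x_k-x_j}\bigr),\]
with an arbitrary value for the first class. Since each $\lambda_j$ is already fixed, this settles every inequality from a treated $j$ to $k\in K$, strict revealed preferences included. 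Only then choose $\lambda_k>0$ large. This settles the inequalities from $k$ to treated $j$, whose coefficients are positive, and those inside $K$, whose coefficients are nonnegative and whose values are equal.

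This interleaving of values and multipliers (or, alternatively, a linear programming duality argument) is what the classical proofs do. With it, the rest of your proposal goes through.
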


Setting $T(x) \coloneqq \{-p : (x,p) \in \mathcal D\}$, GARP corresponds to the cyclical pseudo-monotonicity of $T$ \cite{karamardianComplementarityProblemsCones1976,daniilidisSubdifferentialsQuasiconvexPseudoconvex1999}, while its cyclical quasi-monotonicity corresponds to the weaker requirement that there is no cycle of strict direct revealed preferences (i.e., in which all the inequalities are strict). \Cref{main_thm} then yields the following generalization of Afriat's theorem, which is discussed and proved in \Cref{sec_revealed_preferences}.

\begin{maincor}[Generalized Afriat theorem]\label{cor_afriat}
	Let $\X$ be a separable Banach space and $\mathcal D \subseteq \X \times \X^*$ be an arbitrary data set. The following assertions are equivalent.
	\begin{enumerate}[(i)]
		\item $\mathcal D$ is \emph{rationalized in the strict sense} by a utility function $u \colon \X \to \R$, i.e., strictly cheaper bundles are strictly worse:
		\[\forall (x,p) \in \mathcal D,\ \forall y \in \X, \qquad \pair{p}{y} < \pair{p}{x} \implies u(y) < u(x).\]
		Moreover, $u$ can be chosen upper semicontinuous and quasi-concave, and also essentially non-satiated (meaning that every local maximizer of $u$ is a strict global maximizer) if $\X$ is finite-dimensional.
		\item $\mathcal D$ satisfies the following weakening of GARP, which we call the \emph{Quasi Axiom of Revealed Preference} (QARP): there is no cycle of strict direct revealed preferences, i.e., no $(x_0,p_0), \ldots, (x_{N-1},p_{N-1}) \in \mathcal D$ such that, setting $x_N \coloneqq x_0$, $\pair{p_k}{x_{k+1}} < \pair{p_k}{x_k}$ for every $k < N$.
	\end{enumerate}
\end{maincor}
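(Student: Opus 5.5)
We reduce \Cref{cor_afriat} to \Cref{main_thm} through the operator $T \colon \X \multimap \X^*$, $T(x) \coloneqq \{-p : (x,p) \in \mathcal D\}$, which encodes the data set. Under this dictionary, the strict inequality $\pair{p_k}{x_{k+1}} < \pair{p_k}{x_k}$ reads $\pair{-p_k}{x_{k+1}-x_k} > 0$. A cycle of strict direct revealed preferences is therefore exactly a cycle violating \eqref{cyclical_quasi_monotonicity}. Hence QARP for $\mathcal D$ is equivalent to the cyclical quasi-monotonicity of $T$; no positivity of prices is needed.

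\emph{(ii) $\Rightarrow$ (i).} Assume QARP, so that $T$ is cyclically quasi-monotone. By \Cref{main_thm}\ref{main_thm_existence} there is $f \in \cQ_0(\X)$ with $T \subseteq \nop f$; set $u \coloneqq -f$, which is upper semicontinuous and quasi-concave.

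Fix $(x,p) \in \mathcal D$ and $y$ with $\pair{p}{y} < \pair{p}{x}$, and suppose for contradiction that $u(y) \ge u(x)$, i.e.\ $f(y) \le f(x)$. Then $y$ lies in the sublevel set $\{f \le f(x)\}$. Since $-p \in \nop f(x)$, we get $\pair{-p}{y-x} \le 0$, i.e.\ $\pair{p}{y} \ge \pair{p}{x}$, a contradiction. Thus $u(y) < u(x)$.

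In finite dimension we need essential non-satiation as well. First extend $T$, via Zorn's lemma, to a maximal cyclically quasi-monotone operator $\bar T$; the defining condition involves finitely many points at a time, so chains have cyclically quasi-monotone unions. Then \Cref{main_thm}\ref{main_thm_maximal_CQM} gives $\bar T = \nop f$ with $f \in \cE_0(\X)$. Running the same argument with this $f$ yields a $u = -f$ whose every local maximizer is a strict global maximizer.

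\emph{(i) $\Rightarrow$ (ii).} Suppose $u$ rationalizes $\mathcal D$ in the strict sense and there is a strict cycle $(x_k,p_k)_{k<N}$ with $x_N = x_0$. Applying the rationalization with $y = x_{k+1}$ gives $u(x_{k+1}) < u(x_k)$ for each $k$. Chaining these around the cycle yields $u(x_0) < u(x_0)$, which is absurd. This direction needs no regularity of $u$.

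The main obstacle is only the finite-dimensional refinement: one must check that the union of a chain of cyclically quasi-monotone operators is again cyclically quasi-monotone, which is immediate from the finitary nature of \eqref{cyclical_quasi_monotonicity}. Everything else is the translation above combined with the defining property of $\nop f$.
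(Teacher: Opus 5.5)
Your proposal is correct and follows the paper's proof essentially step for step. It identifies QARP with the cyclical quasi-monotonicity of $T(x)=\{-p:(x,p)\in\mathcal D\}$ and applies item~\labelcref{main_thm_existence} of \Cref{main_thm} to get $T\subseteq \nop f$ with $u=-f$; in finite dimension it takes a maximal extension (your Zorn argument is exactly \Cref{existence_maximal_extension}) together with item~\labelcref{main_thm_maximal_CQM} to obtain $f\in\cE_0(\X)$, and it proves the converse by the same cycle-chaining contradiction $u(x_0)<u(x_0)$.
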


Compared with Afriat's theorem (\Cref{thm_afriat}), the data set may be infinite and lie in a separable Banach space, the price vectors may vanish and have components of any sign, and GARP is replaced by the weaker axiom QARP. In return, the utility function only rationalizes the data in the strict sense, it is quasi-concave and upper semicontinuous instead of concave and continuous, and, in finite dimension, essentially non-satiated instead of locally non-satiated, which is unavoidable in this generality. Infinite data sets in $\R^m_+$, with prices in $\R^m_+ \setminus \{0\}$, were already treated by Reny \cite{renyCharacterizationRationalizableConsumer2015}, who showed that GARP characterizes the data sets rationalized by a quasi-concave utility function, in the classical sense and in the strict sense simultaneously. Our result thus extends his to infinite dimension, arbitrary prices and QARP, at the cost of rationalization in the strict sense only. As in Afriat's theorem, and despite what the adjective \enquote{revealed} suggests, our result only provides the \emph{existence} of a utility function rationalizing the data, which is neither unique nor canonically determined by $\mathcal D$. We refer to \Cref{sec_revealed_preferences} for details, in particular for a discussion of the two notions of rationalization.

Our second application concerns the supremal (or $L^\infty$) optimal transport problem with cost $-\pair{x}{y}$, which consists in minimizing the maximal, or worst-case, cost of a transport plan, instead of its average cost as in classical optimal transport. In the classical problem, typically with the quadratic cost $\abs{x-y}^2$ or equivalently $-\pair{x}{y}$, the functional to minimize is linear, hence convex, in the transport plan, whereas the supremal functional is only quasi-convex. Classical optimal plans are characterized by the cyclical monotonicity of their supports, so that Rockafellar's theorem yields a convex potential $\varphi$ which prescribes the transport, in the sense that the support is contained in the graph of $\partial\varphi$. This is the Brenier potential (or Kantorovich potential in this context), and the optimal plan is unique when the source measure is absolutely continuous \cite{brenierPolarFactorizationMonotone1991}. In the supremal problem, optimal plans are far from unique and convex duality is not available. However, a special subclass of solutions, called restrictable solutions, is characterized by the $\infty$-cyclical monotonicity of the support, and these solutions are induced by transport maps under suitable assumptions \cite{championInftyWassersteinDistance2008,jylhaInftyOptimalTransport2015}. A duality theory was proposed by Barron, Bocea and Jensen \cite{barronDualityInftyOptimal2017}, but its dual problem always admits trivial solutions, and the search for nontrivial Kantorovich-type potentials is still ongoing. Our result partly answers this question (\Cref{thm_supremal_OT}): for the cost $-\pair{x}{y}$, restrictable solutions exist, and the support of every restrictable solution $\gamma$ is cyclically quasi-monotone, so that it is contained in the graph of $\nop f$ for some quasi-convex function $f$, which thus prescribes the direction of transport. We refer to \Cref{sec_supremal_OT} for details.

\paragraph{Related works}
The integration of generalized monotone operators by generalized convex functions has a long history, in generalized convex analysis as well as in revealed preference theory. Bachir, Daniilidis and Penot integrated operators through the lower subdifferential of Plastria \cite{bachirLowerSubdifferentiabilityIntegration2002}. Crouzeix and coauthors integrated pseudomonotone maps in connection with the revealed preference problem \cite{crouzeixIntegrabilityPseudomonotoneDifferentiable2005,crouzeixIntegrationPseudomonotoneMaps2011,crouzeixGeometricalInsightPseudoconvexity2010}, and constructed pseudoconvex potentials of maximal cyclically pseudomonotone maps, which are not unique even after normalization \cite{crouzeixConstructiveProofExistence2012,crouzeixConvexLevelSets2016}. Reny extended Afriat's theorem to arbitrary data sets in $\R^m_+$, with prices in $\R^m_+ \setminus \{0\}$ \cite{renyCharacterizationRationalizableConsumer2015}. All these results rely on strong assumptions: regularity (differentiability, lower subdifferentiability), nondegeneracy (pseudomonotonicity, or GARP), or structure. For instance, \cite{crouzeixConvexLevelSets2016,renyCharacterizationRationalizableConsumer2015} assume that all normal vectors, or prices, lie in a fixed cone admitting an orienting direction $e$, i.e., $\pair{x^*}{e} > 0$ for all of them (the positive orthant with $e = (1,\ldots,1)$ in Reny's setting). In contrast, \Cref{main_thm} requires none of these assumptions, thus also extending the regular and non-vanishing case of \cite{depascaleRockafellarTheoremCyclically2025}: it only uses cyclical quasi-monotonicity, which is weaker than GARP, in any separable Banach space, and separability cannot be dropped (\Cref{rmk_separability}). Closely related questions are the characterization of quasi-convexity through subdifferentials or normal cones to sublevel sets \cite{ausselSubdifferentialCharacterizationQuasiconvexity1994,ausselNormalCharacterizationMain2000,ausselNormalConesSublevel2001}, and the maximality of quasi-monotone operators \cite{buenoMaximalityQuasimonotoneOperators2019}, whose criterion underlies \Cref{sec_maximality_normal_cone}.

\paragraph{Structure of the proof and organization of the paper}
The paper is organized as follows. In \Cref{sec_preliminaries}, we recall the necessary background on preference relations, show that separability is necessary for the existence of potentials, and introduce the preorders canonically associated with a cyclically quasi-monotone operator. \Cref{sec_existence} is devoted to maximal cyclically quasi-monotone operators and to the existence of potentials: we construct a lower semicontinuous quasi-convex preference relation $\preceq$ such that $T \subseteq \nop_\preceq$ (\Cref{thm_chain}), from which we deduce item~\labelcref{main_thm_existence} of \Cref{main_thm}. In \Cref{sec_maximal}, we characterize maximal cyclically quasi-monotone operators in finite dimension, give an example of nonuniqueness of the potential, and complete the proof of \Cref{main_thm}. Finally, \Cref{sec_applications} presents the applications to revealed preferences and supremal optimal transport.

\section{Preference relations and cyclically quasi-monotone operators}\label{sec_preliminaries}

In this section, we fix the vocabulary on preference relations and their representation by functions (\Cref{sec_preference_relations}), define the normal cone operators of chains of sets and of preference relations (\Cref{sec_normal_cone_operators}), show that separability is needed to find a potential (\Cref{sec_necessity_separability}), and introduce the preorders canonically associated with a cyclically quasi-monotone operator (\Cref{sec_canonical_preorders}).

It is convenient to rephrase cyclical quasi-monotonicity in terms of ascending paths. Given an operator $T \colon \X \multimap \X^*$, a \emph{$T$-ascending path from $x$ to $y$} is a sequence $(x_0, \ldots, x_N)$, with $N \geq 1$, such that $x_0 = x$, $x_N = y$ and, for every $i \in \{0, \ldots, N-1\}$, there exists $x_i^* \in T(x_i)$ satisfying $\pair{x_i^*}{x_{i+1}-x_i} > 0$. We call $(x^*_0, \ldots, x^*_{N-1})$ \emph{$T$-ascending directions} for this path, and when $x = y$, we call this path a \emph{$T$-ascending cycle}, which then necessarily has $N \geq 2$. With this notion, cyclical quasi-monotonicity can be expressed as follows:
\begin{equation}\label{CQM_no_ascending_cycle}
	T \text{ is cyclically quasi-monotone} \iff \text{there is no $T$-ascending cycle}.
\end{equation}

\subsection{Preference relations and their representation}\label{sec_preference_relations}

A binary relation $\mathcal R$ on a set $X$ is \emph{reflexive} (resp.\ \emph{irreflexive}) if $x \genrel x$ holds (resp.\ fails) for every $x \in X$, \emph{transitive} if $x \genrel y$ and $y \genrel z$ imply $x \genrel z$, and \emph{total} if $x \genrel y$ or $y \genrel x$ for every $x, y \in X$. A \emph{preorder} is a reflexive and transitive relation and it is a \emph{(partial) order} if moreover $x \genrel y$ and $y \genrel x$ imply $x = y$. A \emph{strict preorder} is an irreflexive and transitive relation, and a \emph{preference relation} is a total preorder.

A preference relation is entirely described by the chain of its lower contour sets. Indeed, given a preference relation $\preceq$ on $\X$, the family $\mathcal C_\preceq \coloneqq (C_\preceq(x))_{x \in \X}$  of its lower contour sets, defined by
\[
	C_\preceq(x) \coloneqq \{w \in \X : w \preceq x\} \qquad (x \in \X)
\]
is a chain, and the properties $x \in C_\preceq(x)$ and ($w \in C_\preceq(x) \implies C_\preceq(w) \subseteq C_\preceq(x)$) are nothing but the reflexivity and transitivity of $\preceq$. Conversely, $\preceq$ is recovered by $x \preceq y \iff x \in C_\preceq(y)$. We say that $\preceq$ is \emph{quasi-convex} (resp.\ \emph{lower semicontinuous}) if all its lower contour sets are convex (resp.\ closed).

Functions provide a third description, which is slightly more restrictive. A function $f \colon \X \to \R$ \emph{represents} $\preceq$ if $x \preceq y \iff f(x) \leq f(y)$, in which case $C_\preceq(x) = \{f \leq f(x)\}$ for every $x$, and $f$ is unique up to composition by a strictly increasing function. Every function induces a preference relation in this way, but the converse requires a countability assumption: a preference relation need not be representable by a real-valued function (see \Cref{rmk_countability} below), while the following classical result of Rader ensures that lower semicontinuous ones are, in second-countable spaces. We refer to \cite[Section~2.2]{depascaleRockafellarTheoremCyclically2025} for more details on these three points of view.

\begin{thm}[Rader]\label{thm_rader}
	For every lower semicontinuous preference relation $\preceq$ on a second-countable topological space $X$, there exists a lower semicontinuous function $f \colon X \to [0,1]$ such that
	\begin{equation}\label{potential_preference_relation_Rader}
		x \preceq y \iff f(x) \leq f(y)
	\end{equation}
	for every $x,y \in X$.
\end{thm}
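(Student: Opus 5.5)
We prove Rader's theorem by the classical countable-basis construction, in the spirit of Debreu's and Rader's arguments. The three steps are: build a countable family of open upper contour sets, encode them by a dyadic series, and then check lower semicontinuity.

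\textbf{Countable family of upper contour sets.} Fix a countable base $(U_n)_{n \in \N}$ of $X$. For each $n$, consider the set
\[
	V_n \coloneqq \bigcup_{x \in X,\ U_n \subseteq \{w : x \prec w\}} \{w : x \prec w\}.
\]
Here $x \prec w$ means $x \preceq w$ and not $w \preceq x$. By totality, $\{w : x \prec w\} = X \setminus C_\preceq(x)$, which is open by lower semicontinuity. The strict upper contour sets form a chain, so $V_n$ is again an open strict upper contour set, or a union of a chain of such sets; in either case it is open and \enquote{upward closed}. This gives a countable family $(V_n)$ of open, upward closed sets.

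\textbf{Definition of $f$.} Set
\[
	f(y) \coloneqq \sum_{n} 2^{-n-1} \mathbf 1_{V_n}(y) \in [0,1].
\]
Since each $V_n$ is open, each indicator $\mathbf 1_{V_n}$ is lower semicontinuous. The series converges uniformly, so $f$ is lower semicontinuous.

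\textbf{Monotonicity.} If $x \preceq y$, then every $V_n$ containing $x$ also contains $y$, because $V_n$ is upward closed. Hence $f(x) \leq f(y)$.

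\textbf{Strictness.} This is the main step. Suppose $x \prec y$. The set $O \coloneqq X \setminus C_\preceq(x)$ is open and contains $y$. Pick $n$ with $y \in U_n \subseteq O$. Then $U_n \subseteq \{w : x \prec w\}$, so $x$ itself is one of the indices in the union defining $V_n$, and therefore $O \subseteq V_n$. In particular $y \in V_n$.

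It remains to show $x \notin V_n$. Suppose instead that $x \in V_n$. Then $x \in \{w : z \prec w\}$ for some $z$ with $U_n \subseteq \{w : z \prec w\}$, that is, $z \prec x$. This alone is no contradiction. So we must instead show that some $V_m$ contains $y$ but not $x$.

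Here is the standard trick. Set $A \coloneqq \{w : x \prec w\}$ and use it directly: choose $n$ with $y \in U_n \subseteq A$, and replace the definition above by
\[
	V_n \coloneqq \bigcap \bigl\{ \{w : z \prec w\} : z \in X,\ U_n \subseteq \{w : z \prec w\} \bigr\}.
\]
This is the smallest strict upper contour set containing $U_n$ among the eligible ones, so $V_n \subseteq A$ and hence $x \notin V_n$. Monotonicity still holds, because an intersection of upward closed sets is upward closed. But $V_n$ may now fail to be open.

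\textbf{Restoring lower semicontinuity.} This is the expected obstacle. Since $V_n \supseteq U_n \ni y$, we get $f(y) > f(x)$ regardless: every $V_m$ containing $x$ contains $y$, and $V_n$ contains $y$ but not $x$. So only lower semicontinuity remains to be checked.

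I would argue via sublevel sets. For any $y$, the set $\{f \leq f(y)\}$ equals $C_\preceq(y)$, by monotonicity together with strictness. These sets are closed. Hence every sublevel set $\{f \leq t\}$ with $t$ in the range of $f$ is closed. For a general $t$, write $\{f \leq t\} = \bigcap_{s>t,\ s \in f(X)} \{f \leq s\}$, adjusting this as needed when $\inf\{s \in f(X) : s > t\}$ is attained. Each set in the intersection is a lower contour set, and the family forms a chain of closed sets, so $\{f \leq t\}$ is closed.

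Finally, $f$ takes values in $[0,1]$ by construction. This yields the representation \eqref{potential_preference_relation_Rader} by a lower semicontinuous function $f \colon X \to [0,1]$.
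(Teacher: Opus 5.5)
\textbf{There is a genuine gap: the lower semicontinuity step fails, and your $f$ can fail to be lower semicontinuous.}

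Your representation step is sound. With the intersection definition, each $V_n$ is upward closed and contains $U_n$. Choosing $y\in U_n\subseteq\{w : x\prec w\}$ makes $x$ an admissible index, so $y\in V_n$ and $x\notin V_n$. This is the same countable-base encoding as the first step sketched in the paper's remark after the theorem.

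The problem is the lower semicontinuity step, which is the nontrivial half of Rader's theorem. The formula $\{f\le t\}=\bigcap_{s>t,\ s\in f(X)}\{f\le s\}$ is false in the following case: $t\notin f(X)$, $b\coloneqq\inf(f(X)\cap\mathopen]t,+\infty\mathclose[)$ is attained, and $\sup(f(X)\cap\mathopen]-\infty,t])$ is not. Then $\{f\le t\}=\{f<b\}=\{w : w\prec y\}$ for any $y$ with $f(y)=b$. This is a \emph{strict} lower contour set, and lower semicontinuity of $\preceq$ says nothing about whether it is closed. No adjustment of the formula can help, because $\{f\le t\}$ is then genuinely not closed.

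This does happen for your $f$. Take $X=\R$ with the base of open intervals with rational endpoints, and $x\preceq y\iff\min(x,0)\le\min(y,0)$, which is even a continuous preference.
\begin{itemize}
\item For each base interval $U_n=\mathopen]a_n,b_n\mathclose[$ with $a_n\ge 0$, the admissible $z$ are exactly the $z<0$. Hence $V_n=\bigcap_{z<0}\mathopen]z,+\infty\mathclose[=[0,+\infty\mathclose[$, which is not open.
\item Every $V_m$ containing some $x<0$ also contains $0$, by upward closedness. So $f(0)-f(x)\ge c\coloneqq\sum_{n : a_n\ge 0}2^{-n-1}>0$ for all $x<0$.
\item Meanwhile $f\equiv f(0)$ on $[0,+\infty\mathclose[$.
\end{itemize}
Thus $\{f\le f(0)-c/2\}=\mathopen]-\infty,0\mathclose[$ is not closed, and $f$ is not lower semicontinuous at $0$.

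What is missing is a reparametrization of $f$. A function representing a lower semicontinuous preference need not be lower semicontinuous. The bad levels are exactly those lying in half-open gaps $[a,b\mathclose[$ of the range, with $a$ not attained and $b$ attained.

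The classical remedy, which the paper invokes for this step, is Debreu's Open Gap Lemma. It gives a strictly increasing $g\colon f(X)\to\R$ such that every gap of $g(f(X))$ is open, i.e.\ of the form $\mathopen]a,b\mathclose[$ with $a,b\in g(f(X))$. Then $g\circ f$ still represents $\preceq$, and your sublevel-set argument becomes correct for it:
\begin{itemize}
\item at levels in the range, the sublevel set is a lower contour set;
\item for $t$ in a gap $\mathopen]a,b\mathclose[$, $\{g\circ f\le t\}=\{g\circ f\le a\}$ is again a lower contour set;
\item every remaining level gives the empty set, the whole space, or a level approached from above by the range, where your intersection formula is valid.
\end{itemize}
Composing with an increasing homeomorphism of $\R$ onto $\mathopen]0,1\mathclose[$ then gives values in $[0,1]$. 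Alternatively, in separable metric spaces, the paper's sublevel-stable lower semicontinuous envelope does the job. Without one of these ingredients, your proof establishes only a representing function, not a lower semicontinuous one.
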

\begin{rmk}[On the proof of Rader's theorem]
	The classical proof decomposes into two steps: first build a general function $f$ that satisfies \labelcref{potential_preference_relation_Rader}, then modify it to make it lower semicontinuous. For the first step, consider a countable basis $(O_n)_{n\in\N^*}$ of open sets of $X$, and set for every closed subset $C \subseteq X$,
	\[g(C) = \sum_{n\geq 1, O_n \subseteq X\setminus C} 2^{-n} \qquad\text{and}\qquad f(x) = 1-g(C_\preceq(x)).\]

	The second step is not easy, and relies on Debreu's Open Gap Lemma. In \cite{depascaleRockafellarTheoremCyclically2025}, we followed the same two-step strategy in the metric setting: for the first step, we gave an explicit function in the case of quasi-convex preference relations, and for the second step, we used the \emph{sublevel-stable lower semicontinuous envelope}, which we introduced there. This envelope, which is defined by
	\[\hat f(x) = \inf \left\{ \liminf_{n\to +\infty} f(x_n') : x_n' \to x'\text{ and } f(x') = f(x)\right\},\]
is lower semicontinuous and still represents the same relation. This gives another proof of Rader's theorem, which is less general as it is restricted to separable metric spaces, but whose construction is more explicit.
\end{rmk}

Since a function representing a quasi-convex preference relation is itself quasi-convex, we get the following corollary.

\begin{cor}\label{cor_rader_separable}
	For every lower semicontinuous and quasi-convex preference relation $\preceq$ on a separable Banach space $\X$, there exists a quasi-convex lower semicontinuous function $f \colon \X \to [0,1]$ representing $\preceq$, meaning that
	\[\forall x,y \in \X, \qquad x \preceq y \iff f(x) \leq f(y).\]
\end{cor}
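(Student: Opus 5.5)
The plan is to apply Rader's theorem (\Cref{thm_rader}) directly, and then check that the function it produces inherits quasi-convexity from the relation. Both steps are short.

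\textbf{Step 1 (second countability).} A separable Banach space $\X$ is a separable metric space. Every separable metric space is second-countable: if $(q_k)_k$ is a countable dense subset, the open balls $B(q_k, 1/m)$ with $k, m \in \N^*$ form a countable basis of the norm topology. Lower semicontinuity of $\preceq$ means that every lower contour set $C_\preceq(x)$ is norm-closed, which is exactly the hypothesis of \Cref{thm_rader} for this topology. Rader's theorem therefore yields a lower semicontinuous $f \colon \X \to [0,1]$ such that $x \preceq y \iff f(x) \leq f(y)$.

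\textbf{Step 2 (quasi-convexity).} Fix $\ell \in \R$ and set $S_\ell \coloneqq \{x \in \X : f(x) \leq \ell\}$. We show that $S_\ell$ is convex. Take $x, y \in S_\ell$ and $t \in [0,1]$. Since $\preceq$ is total, we may assume without loss of generality that $x \preceq y$. Then $x \in C_\preceq(y)$, and trivially $y \in C_\preceq(y)$. The set $C_\preceq(y)$ is convex because $\preceq$ is quasi-convex, so $z \coloneqq tx + (1-t)y \in C_\preceq(y)$. This means $z \preceq y$, hence $f(z) \leq f(y) \leq \ell$, that is, $z \in S_\ell$.

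The same argument can be phrased as the identity $S_\ell = \bigcup_{y \in S_\ell} C_\preceq(y)$, a union of a chain of convex sets, which is convex.

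\textbf{Main obstacle.} The only substantive point is the reduction to Rader's theorem, i.e.\ checking that separability gives second countability. Convexity of sublevel sets uses nothing beyond totality and quasi-convexity of $\preceq$.
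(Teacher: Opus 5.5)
Your proposal is correct and follows the paper's own route: the paper also obtains the corollary from Rader's theorem, using that a separable Banach space is second-countable. It then relies on the one-line remark that any function representing a quasi-convex preference relation is itself quasi-convex, which is exactly what your Step 2 verifies in detail.
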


\begin{rmk}[Necessity of a countability hypothesis]\label{rmk_countability}
	Notice that if \labelcref{potential_preference_relation_Rader} holds, then the chain $\mathcal C_\preceq$ admits an order-preserving indexation by a subset of $\R$, given by $C_\preceq(x) \mapsto f(x)$. It is therefore necessary for $\mathcal C_\preceq$ to have a cardinality at most that of $\R$. Even more than that, it is known that admitting such an indexation is equivalent to being \emph{order-separable}, which in our context means that there exists a countable subfamily $(C_n)_{n\in \N} \subseteq \mathcal C_\preceq$ such that for every $C, C' \in \mathcal C_\preceq$ that satisfy $C \subsetneq C'$, there exists $n$ for which $C \subseteq C_n \subseteq C'$. The second countability assumption on $X$ guarantees that any chain of closed sets is order-separable.
\end{rmk}

\subsection{Normal cone operators}\label{sec_normal_cone_operators}

We now extend the normal cone operator \labelcref{normal_cone_operator_function} to chains of sets. Given a chain $\mathcal C$ of subsets of $\X$, we define its \emph{normal cone operator} $\nop \mathcal C$ by
\begin{equation}\label{normal_cone_operator_chain}
	\forall x \in \X, \qquad \nop \mathcal C(x) \coloneqq \bigcup_{C \in \mathcal C : x \in C} \nop_C(x).
\end{equation}
This covers the two particular cases used in the paper: preference relations and quasi-convex functions. If $\preceq$ is a preference relation, we write $\nop_\preceq \coloneqq \nop\mathcal C_\preceq$. Since $C_\preceq(x)$ is the smallest element of $\mathcal C_\preceq$ containing $x$, and the normal cone at $x$ decreases as the set increases, we have
\begin{equation}\label{normal_cone_operator_preference}
	\forall x \in \X, \qquad \nop_\preceq(x) = \nop_{C_\preceq(x)}(x).
\end{equation}
Likewise, if $f \colon \X \to \R \cup \{+\infty\}$ is quasi-convex, the normal cone operator of the chain of its sublevel sets is $\nop f$, as defined in \labelcref{normal_cone_operator_function}. In particular, $\nop_\preceq = \nop f$ whenever $f$ represents $\preceq$.

\begin{prp}\label{convex_chain_operator_CQM}
	Let $\mathcal C$ be a chain of subsets of $\X$. Then $\nop\mathcal C$, defined in \labelcref{normal_cone_operator_chain}, is cyclically quasi-monotone.
\end{prp}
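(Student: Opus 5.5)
By \labelcref{CQM_no_ascending_cycle}, it suffices to show that $\nop\mathcal C$ admits no ascending cycle. So I argue by contradiction and assume there is a $\nop\mathcal C$-ascending cycle $(x_0,\ldots,x_N)$ with $x_N = x_0$. For each $i<N$ there is then some $C_i \in \mathcal C$ with $x_i \in C_i$ and some $x_i^* \in \nop_{C_i}(x_i)$ such that $\pair{x_i^*}{x_{i+1}-x_i} > 0$.

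The key observation is that an ascending step leaves the current set. Since $x_i^* \in \nop_{C_i}(x_i)$, every $y \in C_i$ satisfies $\pair{x_i^*}{y-x_i} \leq 0$. The strict inequality $\pair{x_i^*}{x_{i+1}-x_i} > 0$ therefore forces $x_{i+1} \notin C_i$. We also know $x_{i+1} \in C_{i+1}$, with indices taken mod $N$ so that $C_N = C_0$. Because $\mathcal C$ is a chain, $C_{i+1}$ and $C_i$ are comparable. If $C_{i+1} \subseteq C_i$ held, we would get $x_{i+1} \in C_i$, which is impossible. Hence $C_i \subsetneq C_{i+1}$ for every $i<N$.

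Chaining these strict inclusions around the cycle gives $C_0 \subsetneq C_1 \subsetneq \cdots \subsetneq C_N = C_0$. This yields $C_0 \subsetneq C_0$, a contradiction. So no ascending cycle exists, and $\nop\mathcal C$ is cyclically quasi-monotone.

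There is no genuine obstacle here. The only points needing care are that the chosen sets $C_i$ may differ from index to index, which is why they are chosen separately for each $i$, and that totality of the chain is what turns the non-inclusion $C_i \not\supseteq C_{i+1}$ into the strict inclusion $C_i \subsetneq C_{i+1}$. Convexity of the sets is not needed.
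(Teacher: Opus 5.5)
Your proof is correct and follows essentially the same argument as the paper's. You assume an ascending cycle, choose $C_i \in \mathcal C$ with $x_i^* \in \nop_{C_i}(x_i)$, deduce $x_{i+1} \notin C_i$, and use the chain property together with $x_{i+1} \in C_{i+1}$ to obtain the impossible cycle of strict inclusions $C_0 \subsetneq C_1 \subsetneq \cdots \subsetneq C_0$.
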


\begin{proof}
	By contradiction, assume that there exists an $\nop\mathcal C$-ascending cycle $x_0, \ldots, x_N = x_0$ with associated $\nop\mathcal C$-ascending directions $x_0^*, \ldots, x^*_{N-1}$:
	\begin{equation}\label{ascending_path_convex_chain}
		\forall j < N, \quad \pair{x^*_j}{x_{j+1}-x_j} > 0.
	\end{equation}
	By definition of $\nop\mathcal C$, for every $j < N$, $x^*_j \in \nop_{C_j}(x_j)$ for some $C_j \in \mathcal C$, which implies that $x_j \in C_j$ (recall that $\nop_C(x) = \emptyset$ when $x \notin C$). From \labelcref{ascending_path_convex_chain} we then deduce that $x_{j+1} \notin C_j$ for every $j < N$, and since $x_{j+1} \in C_{j+1}$ (with $C_N \coloneqq C_0$) and $\mathcal C$ is a chain, we obtain
	\[C_0 \subsetneq C_1 \subsetneq \cdots \subsetneq C_{N-1} \subsetneq C_0,\]
	a contradiction.
\end{proof}

Allowing chains of nonconvex sets does not bring more generality, since $(A-x)^\circ = (\closedconv A - x)^\circ$ for every $x \in A$, and the closed convex hulls of the elements of a chain still form a chain. This is why, when looking for potentials, we may restrict to chains of closed convex sets, or lower semicontinuous quasi-convex functions.

For preference relations, we will use the following regularization, which replaces the lower contour sets by their closures while keeping a transitive relation.

\begin{lem}[Lower semicontinuous regularization]\label{lsc_regularization}
	Let $\preceq$ be a preference relation on $\X$, and define the relation $\overline{\preceq}$ on $\X$ by
	\[x \mathrel{\overline{\preceq}} y \iff x \text{ belongs to every set } \cl C_\preceq(x'),\ x' \in \X, \text{ that contains } y.\]
	Then $\overline{\preceq}$ is a lower semicontinuous preference relation, which is quasi-convex if $\preceq$ is quasi-convex, and $\nop_\preceq \subseteq \nop_{\overline{\preceq}}$.
\end{lem}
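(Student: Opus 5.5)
The plan is to work with the chain $\overline{\mathcal C} \coloneqq (\cl C_\preceq(x'))_{x' \in \X}$ and to show that the lower contour sets of $\overline{\preceq}$ are members of this chain. Since $\mathcal C_\preceq$ is a chain and closure preserves inclusion, $\overline{\mathcal C}$ is again a chain of closed sets. By definition, $C_{\overline\preceq}(y)$ is the intersection $D(y)$ of all members of $\overline{\mathcal C}$ that contain $y$. The first key step is to identify $D(y) = \cl C_\preceq(y)$. On the one hand, $y \in \cl C_\preceq(y)$, so $D(y) \subseteq \cl C_\preceq(y)$. On the other hand, suppose $y \in \cl C_\preceq(x')$. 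If $y \preceq x'$, then $C_\preceq(y) \subseteq C_\preceq(x')$, and taking closures gives $\cl C_\preceq(y) \subseteq \cl C_\preceq(x')$. Otherwise, by totality, $x' \prec y$, so $C_\preceq(x') \subseteq C_\preceq(y)$, and again $\cl C_\preceq(x') \subseteq \cl C_\preceq(y)$. In this second case $\cl C_\preceq(y)$ might be strictly larger than $\cl C_\preceq(x')$, so the identity $D(y) = \cl C_\preceq(y)$ can fail. I therefore will not rely on it, and will instead argue directly with $D(y)$.

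Next come the preference-relation properties. Reflexivity holds because $y \in D(y)$. For transitivity, suppose $x \in D(y)$ and $y \in D(z)$, and let $F \in \overline{\mathcal C}$ contain $z$. Then $y \in F$, hence $x \in F$, so $x \in D(z)$. For totality, take $x, y$ and suppose $x \notin D(y)$. Then there is $F_0 \in \overline{\mathcal C}$ with $y \in F_0$ and $x \notin F_0$. For any $F \in \overline{\mathcal C}$ containing $x$, the chain property forces $F \supseteq F_0$, since otherwise $F \subseteq F_0$ would give $x \in F_0$. Hence $y \in F$, and so $y \in D(x)$. Lower semicontinuity follows because $D(y)$ is an intersection of closed sets. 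For quasi-convexity, if every $C_\preceq(x')$ is convex, then so are their closures, and an intersection of convex sets is convex.

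For the inclusion of normal cones, let $x^* \in \nop_\preceq(y) = \nop_{C_\preceq(y)}(y)$, using \labelcref{normal_cone_operator_preference}. By continuity of $x^*$, the inequality $\pair{x^*}{w-y} \le 0$ extends from $C_\preceq(y)$ to $\cl C_\preceq(y)$. Since $y \in \cl C_\preceq(y)$, which is a member of $\overline{\mathcal C}$, we have $D(y) \subseteq \cl C_\preceq(y)$. The normal cone at $y$ shrinks as the set grows, so $x^* \in \nop_{D(y)}(y) = \nop_{\overline\preceq}(y)$.

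The main point needing care is totality. The argument above shows that it comes for free from the chain structure of $\overline{\mathcal C}$, without needing the identity $D(y) = \cl C_\preceq(y)$. The remaining steps are routine verifications.
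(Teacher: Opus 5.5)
Your proof is correct and follows essentially the same route as the paper. In both, the lower contour sets of $\overline{\preceq}$ are intersections of members of the chain $\{\cl C_\preceq(x')\}_{x' \in \X}$; totality comes from the chain structure (the paper shows that intersections of subfamilies of a chain form a chain, while you check directly that $x \notin D(y)$ implies $y \in D(x)$, which is the same argument); and the normal cone inclusion uses only $y \in D(y) \subseteq \cl C_\preceq(y)$. You were right not to claim $D(y) = \cl C_\preceq(y)$, since it can fail, and the paper likewise uses only this inclusion.
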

\begin{proof}
	The relation $\overline{\preceq}$ is reflexive and transitive, and for every $y \in \X$, its lower contour set $C_{\overline{\preceq}}(y)$ is the intersection of the elements of $\mathcal K \coloneqq \{\cl C_\preceq(x') : x' \in \X\}$ that contain $y$. The family $\mathcal K$ is a chain, as the family of the closures of the elements of the chain $\mathcal C_\preceq$, and intersections of subfamilies of a chain form a chain (if $\bigcap \mathcal A \not\subseteq \bigcap \mathcal B$, some $B \in \mathcal B$ is contained in every $A \in \mathcal A$, so that $\bigcap \mathcal B \subseteq \bigcap \mathcal A$). Therefore the lower contour sets of $\overline{\preceq}$ form a chain, and since $x \in C_{\overline{\preceq}}(x)$ for every $x$, the relation $\overline{\preceq}$ is total. These sets are closed, and convex if $\preceq$ is quasi-convex, since the closure of a convex set is convex. Finally, $y \in C_{\overline{\preceq}}(y) \subseteq \cl C_\preceq(y)$ and $(A-y)^\circ = (\cl A - y)^\circ$ for every set $A$, so that, by \labelcref{normal_cone_operator_preference},
	\[\nop_\preceq(y) = \nop_{\cl C_\preceq(y)}(y) \subseteq \nop_{C_{\overline{\preceq}}(y)}(y) = \nop_{\overline{\preceq}}(y). \qedhere\]
\end{proof}

\subsection{Necessity of separability}\label{sec_necessity_separability}

By \Cref{rmk_countability}, a lower semicontinuous quasi-convex preference relation on a nonseparable space need not be representable by a function. This does not immediately provide a cyclically quasi-monotone operator without potential, since such an operator can coincide with the normal cone operator $\nop_{\preceq}$ of several preference relations $\preceq$, one of which may be representable by a function. Showing that separability is necessary in \Cref{main_thm} therefore requires a finer construction. It is inspired by the following example of a chain of subsets of $\R$ that cannot be indexed, in an order-preserving way, by $\R$. Consider the family $\mathcal F$ made of $A_t = \{u : u < t\}$ and $B_t = \{u : u \leq t\}$ for every $t \in \R$. If there were a function $\phi \colon \mathcal F \to \R$ strictly increasing for inclusion, then for each $s < t$ we would have $\phi(B_s) < \phi(A_t) < \phi(B_t)$, so that $\{(\phi(A_t), \phi(B_t))\}_{t\in \R}$ would form an uncountable family of disjoint nonempty open intervals of $\R$, which is impossible. In the following proposition, we transpose this example to a chain of closed vector subspaces, by assigning to each $t$ a copy of $\R$ and taking the direct sum.

\begin{prp}\label{prp_nonseparable_counterexample}
	There exists a cyclically quasi-monotone operator $T \colon \X \multimap \X^*$ on some nonseparable Banach space $\X$ such that $T \not\subseteq \nop f$ for every quasi-convex function $f \colon \X \to \R$.
\end{prp}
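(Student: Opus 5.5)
We construct an explicit operator on $\X \coloneqq \ell^1(\R)$, the space of summable families $(z_u)_{u\in\R}$. It is a nonseparable Banach space, since the basis vectors $e_t$, $t \in \R$, are at mutual distance $2$, and its dual is $\X^* = \ell^\infty(\R)$. The idea is to transpose the chain $A_t \subsetneq B_t$ of the preceding discussion into two families of points, $p_t$ and $q_t$. Any potential $f$ will then be forced to satisfy
\[
	f(q_s) < f(p_t) < f(q_t) \qquad \text{for all } s < t,
\]
which makes the open intervals $(f(p_t), f(q_t))$, $t \in \R$, nonempty and pairwise disjoint. This is impossible, since an uncountable family of disjoint nonempty open intervals of $\R$ cannot exist (each would contain a distinct rational).

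\textbf{Construction.} For $t \in \R$, set $p_t \coloneqq e_t$ and $q_t \coloneqq 2e_t$. Define
\[
	T(p_t) \coloneqq \{e_t\}, \qquad T(q_t) \coloneqq \{\mathbf 1_{(t,+\infty)}\},
\]
where $e_t$ is viewed as the coordinate functional at $t$ and $\mathbf 1_{(t,+\infty)} \in \ell^\infty(\R)$ acts by $\pair{\mathbf 1_{(t,+\infty)}}{z} = \sum_{u>t} z_u$. Set $T(x) \coloneqq \emptyset$ at all other points. Using a functional with uncountable support is the reason for working in $\ell^1$ rather than a Hilbert space, where a single functional could not dominate uncountably many directions.

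\textbf{Cyclical quasi-monotonicity.} We use \labelcref{CQM_no_ascending_cycle} and list all $T$-ascending steps between points of $\dom T$.
\begin{itemize}
	\item From $p_t$, a step to $z$ requires $z_t - 1 > 0$. Among the points $p_u, q_u$, this holds only for $z = q_t$.
	\item From $q_s$, a step to $z$ requires $\sum_{u>s} z_u - 0 > 0$, since $2e_s$ has no mass on $(s,\infty)$. This holds exactly for $z \in \{p_t, q_t\}$ with $t > s$.
\end{itemize}
Give $p_t$ the rank $(t,0)$ and $q_t$ the rank $(t,1)$, ordered lexicographically. Every ascending step strictly increases this rank, so no ascending path can return to its starting point, and $T$ is cyclically quasi-monotone.

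\textbf{No potential.} Suppose $T \subseteq \nop f$ for some quasi-convex $f \colon \X \to \R$. By definition of $\nop f$, if $x^* \in \nop f(x)$ and $\pair{x^*}{y-x} > 0$, then $y \notin \{f \le f(x)\}$, that is, $f(y) > f(x)$.
\begin{itemize}
	\item The step $p_t \to q_t$ gives $f(q_t) > f(p_t)$.
	\item The steps $q_s \to p_t$ for $t > s$ give $f(p_t) > f(q_s)$.
\end{itemize}
Hence, for $s < t$, we get $f(p_s) < f(q_s) < f(p_t) < f(q_t)$, so the intervals $(f(p_t), f(q_t))$ are nonempty and pairwise disjoint, a contradiction.

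The only delicate point is the verification of cyclical quasi-monotonicity, which reduces to the enumeration of ascending steps above. Note that quasi-convexity of $f$ is not even used in the last step, so $T$ admits no real-valued potential of any kind.
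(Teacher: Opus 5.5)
Your proof is correct, and it takes a different, more elementary route than the paper's.

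\textbf{The paper's construction.} The paper works in the Hilbert space $\ell^2(\R)$ and takes $T = \nop\mathcal C$ for the chain of closed subspaces $V_t = \clspn\{e_s : s<t\}$, $W_t = \clspn\{e_s : s \le t\}$. Cyclical quasi-monotonicity then comes for free from \Cref{convex_chain_operator_CQM}. The inequalities $f(e_s) \le f(v_t) < f(e_t)$ for $s<t$, however, require computing $T(e_t) = W_t^\perp$ and $T(v_t) = V_t^\perp$, where $v_t \in V_t$ is an auxiliary point whose support accumulates at $t$ from the left. The sublevel sets of $f$ are then compared with $V_t$ and $W_t$ through polarity.

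\textbf{Your construction.} You discretize the chain into two points per $t$. You check cyclical quasi-monotonicity by hand with the lexicographic rank on $\R\times\{0,1\}$, which is the non-order-separable chain $A_t \subsetneq B_t$ in disguise. You then read each strict inequality off a single ascending step. This is shorter and fully self-contained.

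\textbf{What the paper's choice of space buys.} $\ell^2(\R)$ is reflexive, hence smoothable, so \Cref{thm_chain} applies there. The paper's example therefore shows that separability in \Cref{main_thm} cannot be weakened even to reflexivity, the failure lying exactly in the representation step (\Cref{rmk_separability}). Your $\ell^1(\R)$ is not reflexive, so your example as written does not show this.

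\textbf{Your stated reason for avoiding Hilbert space is mistaken.} You do not need a single functional, since $T$ may be multivalued. In $\ell^2(\R)$, keep $T(p_t)=\{e_t\}$ and set $T(q_s) \coloneqq \{e_t : t > s\}$. For $t \neq s$ one has $\pair{e_t}{z - q_s} = z_t$, so the ascending steps from $q_s$ are still exactly $q_s \to p_t$ and $q_s \to q_t$ for $t>s$. Your rank argument and your interval argument then go through verbatim in the Hilbert space, recovering the paper's sharper setting.
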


\begin{proof}
	We consider the Hilbert space $\X = \ell^2(\R,\R)$ consisting of functions $x$ which are square-summable, i.e., $\sum_{t\in \R} \abs{x(t)}^2 < +\infty$ (or equivalently, square-integrable for the counting measure), equipped with the scalar product $\pair{x}{y} = \sum_t x(t) y(t)$. We denote by $(e_s)_{s\in \R}$ the canonical orthonormal basis and we set for every $t \in \R$,
	\[V_t = \clspn \{ e_s : s < t\}, \qquad W_t = \clspn\{e_s : s \leq t\}.\]
	These closed subspaces form a chain $\mathcal C \coloneqq \{V_t\}_{t\in \R} \cup \{W_t\}_{t\in \R}$, and we set $T = \nop \mathcal C$, which is cyclically quasi-monotone by \Cref{convex_chain_operator_CQM}. We proceed by contradiction, assuming that $T \subseteq \nop f$ for some quasi-convex function $f \colon \X \to \R$.

	First of all, the normal cones of the elements of $\mathcal C$ are explicit, since $\nop_M(x) = M^\perp$ for any closed subspace $M$ and any $x \in M$. Let us compute $T(e_t)$. Notice that the subspaces in $\mathcal C$ that contain $e_t$ are exactly the $W_r$'s with $r \geq t$ and the $V_r$'s with $r > t$, and $W_t$ is the smallest one. This implies
	\[T(e_t) = \nop\mathcal C(e_t) = \nop_{W_t}(e_t) = W_t^\perp.\]
	Now set $v_t \coloneqq \sum_{n\geq 1} e^{-n} e_{t-1/n} \in V_t$ and compute $T(v_t)$. The subspaces in $\mathcal C$ that contain $v_t$ are all $W_r$'s for $r \geq t$ and all $V_r$'s for $r \geq t$, and $V_t$ is the smallest one. We get
	\[T(v_t) = \nop\mathcal C(v_t) = \nop_{V_t}(v_t) = V_t^\perp.\]

	Setting $V'_t \coloneqq \{f \leq f(v_t)\}$ and $W'_t \coloneqq \{f \leq f(e_t)\}$, the inclusion $T(v_t) \subseteq \nop f(v_t) = (V'_t - v_t)^\circ$ yields
	\[V'_t - v_t \subseteq T(v_t)^\circ = V_t^{\perp\circ} = V_t^{\perp\perp} = V_t,\]
	thus $V'_t \subseteq V_t$ since $v_t \in V_t$, and similarly $W'_t \subseteq W_t$.

	Since $e_t \in W'_t$ and $e_t \notin V_t \supseteq V'_t$, and since the sublevel sets of $f$ are nested, we get $V'_t \subsetneq W'_t$. As a consequence $f(v_t) < f(e_t)$ and
	\[I_t \coloneqq (f(v_t), f(e_t))\]
	is a nonempty open interval of $\R$. If $s < t$, then $v_t \in V'_t$ but $v_t \notin W_s \supseteq W'_s$. Again, since the sublevel sets of $f$ are nested, we get $W'_s \subsetneq V'_t$, thus $f(e_s) \leq f(v_t)$. Therefore $\sup I_s \leq \inf I_t$. We conclude that $\{I_s\}_{s\in \R}$ is an uncountable disjoint family of open intervals of $\R$: this is impossible, hence no potential $f$ for $T$ may exist.
\end{proof}

\subsection{The canonical preorders}\label{sec_canonical_preorders}

Let $T \colon \X \multimap \X^*$ be an operator. Following \cite{depascaleRockafellarTheoremCyclically2025}, we define two binary relations on $\X$ by
\begin{align}
	x \prec_T y & \iff \text{there exists a $T$-ascending path from $x$ to $y$},\label{def_strict_pre-order}\\
	\shortintertext{and}
	x \preceq_T y & \iff \{\cdot \prec_T x\} \subseteq \{\cdot \prec_T y\}.\label{def_pre-order}
\end{align}
We refer to \cite[Remark~2.15]{depascaleRockafellarTheoremCyclically2025} for a discussion of this choice of definition of $\preceq_T$, which favors transitivity over totality. The following proposition is stated in $\R^d$ in \cite{depascaleRockafellarTheoremCyclically2025}, but its proof holds verbatim in $\X$.

\begin{prp}[{\cite[Proposition~2.14]{depascaleRockafellarTheoremCyclically2025}}]\label{transitivity_properties}
	Let $T \colon \X \multimap \X^*$ be a cyclically quasi-monotone operator.
	\begin{enumerate}[(i)]
		\item\label{item_strict_pre-order} The relation $\prec_T$ is a strict preorder.
		\item\label{item_pre-order} The relation $\preceq_T$ is a preorder.
		\item\label{item_strict_implies_large} For every $x,y \in \X$,
		\begin{equation}\label{strict_implies_large}
			x \prec_T y \implies x \preceq_T y.
		\end{equation}
		\item\label{item_mixed_transitivity} The following mixed transitivity property holds: for every $x,y,z \in \X$,
		\begin{equation}\label{strlarstr}
			x \prec_T y \text{ and } y \preceq_T z \implies x \prec_T z.
		\end{equation}
	\end{enumerate}
\end{prp}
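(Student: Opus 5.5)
The four items follow directly from the definitions \labelcref{def_strict_pre-order} and \labelcref{def_pre-order}, using only concatenation of $T$-ascending paths and the absence of $T$-ascending cycles \labelcref{CQM_no_ascending_cycle}. I will prove them in the order \ref{item_strict_pre-order}, \ref{item_pre-order}, \ref{item_strict_implies_large}, \ref{item_mixed_transitivity}, since the last two reuse the first.

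For \ref{item_strict_pre-order}, transitivity of $\prec_T$ is obtained by concatenation. If $(x_0,\dots,x_N)$ is a $T$-ascending path from $x$ to $y$ and $(y_0,\dots,y_M)$ is one from $y$ to $z$, then $(x_0,\dots,x_N=y_0,y_1,\dots,y_M)$ is a $T$-ascending path from $x$ to $z$. Indeed, every step keeps its ascending direction: the step starting at $x_N=y_0$ uses $y_0^*$. Irreflexivity is exactly the absence of $T$-ascending cycles: a relation $x\prec_T x$ would provide a $T$-ascending cycle, which \labelcref{CQM_no_ascending_cycle} forbids. This is the only place where cyclical quasi-monotonicity enters.

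For \ref{item_pre-order}, the relation $\preceq_T$ is defined by inclusion of the sets $\{\cdot\prec_T x\}$. Inclusion of sets is reflexive and transitive, so $\preceq_T$ is reflexive and transitive as well. No hypothesis on $T$ is needed here.

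For \ref{item_strict_implies_large}, assume $x\prec_T y$ and take any $w$ with $w\prec_T x$. Transitivity from \ref{item_strict_pre-order} gives $w\prec_T y$, so $\{\cdot\prec_T x\}\subseteq\{\cdot\prec_T y\}$, which is exactly $x\preceq_T y$.

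For \ref{item_mixed_transitivity}, assume $x\prec_T y$ and $y\preceq_T z$. Then $x\in\{\cdot\prec_T y\}\subseteq\{\cdot\prec_T z\}$, hence $x\prec_T z$.

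None of these steps is a real obstacle. The only point requiring care is in \ref{item_strict_pre-order}: the concatenated sequence must still qualify as a path with $N\geq1$ and a valid ascending direction at every step, and the cycle ruled out by \labelcref{CQM_no_ascending_cycle} must be a genuine $T$-ascending cycle, with $N\geq 2$ automatically. As the statement indicates, the argument is purely combinatorial, so it holds verbatim in any Banach space $\X$.
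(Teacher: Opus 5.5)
Your proof is correct and follows the same elementary route as the cited proof, which the paper does not reproduce: concatenating $T$-ascending paths gives transitivity of $\prec_T$, the absence of $T$-ascending cycles gives irreflexivity, and items (ii)--(iv) follow directly from defining $\preceq_T$ as inclusion of the sets $\{\cdot \prec_T x\}$. As you observe, cyclical quasi-monotonicity is used only for irreflexivity, which is why the argument holds verbatim in any Banach space.
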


\begin{cor}\label{strict_part}
	Let $T \colon \X \multimap \X^*$ be a cyclically quasi-monotone operator. Then $\prec_T$ is contained in the strict part of $\preceq_T$, i.e., the relation defined by ($x \preceq_T y$ and $y \not\preceq_T x$).
\end{cor}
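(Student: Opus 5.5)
Suppose $x \prec_T y$. We must show two things: $x \preceq_T y$, and $y \not\preceq_T x$. The first is exactly item \labelcref{item_strict_implies_large} of \Cref{transitivity_properties}, namely implication \labelcref{strict_implies_large}. So the only work is to rule out $y \preceq_T x$.

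We argue by contradiction and assume $y \preceq_T x$. We then apply the mixed transitivity \labelcref{strlarstr} of \Cref{transitivity_properties}\labelcref{item_mixed_transitivity} with the triple $(x,y,x)$. From $x \prec_T y$ and $y \preceq_T x$ it yields $x \prec_T x$. This contradicts the irreflexivity of $\prec_T$, which is part of its being a strict preorder by \Cref{transitivity_properties}\labelcref{item_strict_pre-order}. Hence $y \not\preceq_T x$.

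Irreflexivity is also visible directly. A $T$-ascending path from $x$ to $x$ is a $T$-ascending cycle, and \labelcref{CQM_no_ascending_cycle} excludes such cycles. Once \Cref{transitivity_properties} is available there is no real obstacle. The only point requiring care is to take the right instance of mixed transitivity, with $z = x$, so that the strict relation lands on the diagonal.
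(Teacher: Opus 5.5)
Your proof is correct and is exactly the paper's argument: $x \preceq_T y$ comes from item \labelcref{item_strict_implies_large} of \Cref{transitivity_properties}, and $y \not\preceq_T x$ holds because mixed transitivity with $z = x$ would force $x \prec_T x$, contradicting irreflexivity. Your side remark that irreflexivity also follows directly from the absence of $T$-ascending cycles is correct as well.
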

\begin{proof}
	If $x \prec_T y$, then $x \preceq_T y$ by \Cref{transitivity_properties} \labelcref{item_strict_implies_large}, but $y \not\preceq_T x$, since otherwise \Cref{transitivity_properties} \labelcref{item_mixed_transitivity} would give $x \prec_T x$, contradicting \Cref{transitivity_properties} \labelcref{item_strict_pre-order}.
\end{proof}

For every $x \in \X$, we denote by
\[S^T(x) \coloneqq \{\cdot \prec_T x\} \quad\text{and}\quad C^T(x) \coloneqq \{\cdot \preceq_T x\}\]
the strict lower contour set of $x$ for $\prec_T$ and its lower contour set for $\preceq_T$ (i.e., $C^T(x) = C_{\preceq_T}(x)$ with the notation of \Cref{sec_preference_relations}), so that $S^T(x) \subseteq C^T(x)$ by \Cref{transitivity_properties} \labelcref{item_strict_implies_large}. While $S^T(x)$ has no reason to be convex in general, the sets $C^T(x)$ are always closed and convex, as stated in the following proposition, whose proof in \cite{depascaleRockafellarTheoremCyclically2025} also holds verbatim in $\X$.

\begin{prp}[{\cite[Proposition~2.17]{depascaleRockafellarTheoremCyclically2025}}]\label{contour_sets_closed_convex}
	Let $T \colon \X \multimap \X^*$ be a cyclically quasi-monotone operator. For every $x \in \X$, the set $C^T(x)$ is closed and convex.
\end{prp}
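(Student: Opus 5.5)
The plan is to write $C^T(x)$ as an intersection of sets that are visibly closed and convex. Unwinding the definition \labelcref{def_pre-order} gives
\[
	C^T(x) = \{y \in \X : S^T(x) \subseteq S^T(y)\} = \bigcap_{w \in S^T(x)} \{y \in \X : w \prec_T y\}.
\]
If $S^T(x) = \emptyset$, this is all of $\X$, which is trivially closed and convex. Otherwise it suffices to show that the intersection over $w \in S^T(x)$ can be rewritten as an intersection of closed half-spaces, or at least of closed convex sets.

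The key observation concerns the last step of a $T$-ascending path. We have $w \prec_T y$ if and only if there is a point $z$ with either $z = w$ or $w \prec_T z$, and some $z^* \in T(z)$ with $\pair{z^*}{y-z} > 0$. Thus $\{\cdot \succ_T w\}$ is a union of \emph{open} half-spaces, which is neither closed nor convex. The intersection over $w \in S^T(x)$ therefore has to be handled more cleverly, using the mixed transitivity \labelcref{strlarstr} together with cyclical quasi-monotonicity.

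For closedness, take $y_n \to y$ with $y_n \in C^T(x)$, and fix $w \in S^T(x)$. We must show $w \prec_T y$. Since $w \prec_T x$, choose an ascending path from $w$ to $x$, with last point $z$ and direction $z^*$, so that $\pair{z^*}{x-z}>0$. The idea is to exploit the strict inequality: openness lets us perturb the endpoint, so the set $\{\cdot \succ_T w\}$ is open, and we need a reverse argument. I would instead use the dual description
\[
	y \notin C^T(x) \iff \exists w \prec_T x \text{ with } w \not\prec_T y .
\]
I would try to show that $w\not\prec_T y$ propagates to a neighbourhood of $y$ for a suitable choice of $w$, by replacing $w$ with a point of the path slightly closer to $x$. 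Concretely: if $w_0=w,\dots,w_N=x$ is ascending, then the points $w'$ on that path satisfy $w\prec_T w'$, and perturbing $w_{N-1}$ slightly still gives an ascending path from $w$ to $x$. If some neighbour $y'$ of $y$ satisfied $w'\prec_T y'$ for all such perturbed $w'$, one would like to derive $w\prec_T y$. The main obstacle is exactly this limiting step, and I would try first to handle it via the characterization $w\prec_T y$ iff $y$ lies in the union of open half-spaces $\{\pair{z^*}{\cdot - z}>0\}$ over $z \succeq w$ in the path-reachable set.

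For convexity, take $y_1,y_2 \in C^T(x)$, $\lambda\in[0,1]$, $y=\lambda y_1+(1-\lambda)y_2$, and $w\prec_T x$. Then $w\prec_T y_1$ and $w\prec_T y_2$, with last steps $(z_1,z_1^*)$ and $(z_2,z_2^*)$. If $\pair{z_1^*}{y-z_1}>0$ or $\pair{z_2^*}{y-z_2}>0$, we are done. Otherwise
\[
	\pair{z_1^*}{y_2-z_1}<0 \qquad\text{and}\qquad \pair{z_2^*}{y_1-z_2}<0,
\]
by linearity. I would then aim for a contradiction with \labelcref{CQM_no_ascending_cycle}, or derive $w\prec_T y$ by routing a path through $z_1$ and $z_2$ using \Cref{transitivity_properties}: for instance, compare $\pair{z_1^*}{z_2-z_1}$ and build an ascending cycle from $z_1$ to $z_2$ and back when the ordering fails. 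This case analysis, together with the closedness limit, is where I expect the real work to lie.
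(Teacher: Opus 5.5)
\textbf{The identity you start from reverses the inclusion, so the proposal targets the wrong set.} By \labelcref{def_pre-order}, $y \in C^T(x)$ means $y \preceq_T x$, i.e.\ $S^T(y) \subseteq S^T(x)$. The set you work with is
\[
	\bigcap_{w \in S^T(x)} \{y : w \prec_T y\} = \{y : S^T(x) \subseteq S^T(y)\},
\]
which is the \emph{upper} contour set $\{y : x \preceq_T y\}$. That set is in general neither closed nor convex. So the closedness limit and the convexity case analysis you single out as the real work cannot be completed, because they would prove a false statement.

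For instance, on $\X = \R$ let $T(0) = \{-1,1\}$ and $T(y) = \emptyset$ for $y \neq 0$. This operator is trivially cyclically quasi-monotone. We have $S^T(y) = \{0\}$ for $y \neq 0$ and $S^T(0) = \emptyset$. Hence $\{y : 1 \preceq_T y\} = \R \setminus \{0\}$, which is neither closed nor convex. By contrast, $C^T(1) = \R$ and $C^T(0) = \{0\}$ are closed and convex, as the statement claims.

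\textbf{The fix.} With the correct definition, the \enquote{last step} observation you already made settles both properties at once, with no limiting argument or case analysis.

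Suppose $y \notin C^T(x)$. Pick $w \prec_T y$ with $w \not\prec_T x$. Let $(z,z^*)$ be the last step of a $T$-ascending path from $w$ to $y$, so that $z^* \in T(z)$ and $\pair{z^*}{y-z} > 0$. Either $z = w$ or $w \prec_T z$, so transitivity of $\prec_T$ gives $z \notin S^T(x)$.

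Conversely, suppose $z \notin S^T(x)$ and $z^* \in T(z)$ satisfy $\pair{z^*}{y-z} > 0$. Then $z \in S^T(y) \setminus S^T(x)$, so $y \notin C^T(x)$.

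Therefore
\[
	C^T(x) = \bigcap_{z \in \X \setminus S^T(x)} \ \bigcap_{z^* \in T(z)} \{y \in \X : \pair{z^*}{y-z} \leq 0\}.
\]
This is an intersection of closed half-spaces (or copies of $\X$ when $z^* = 0$), hence closed and convex. Note that this uses only the transitivity of $\prec_T$, which holds for any operator by concatenating ascending paths.
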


The relation $\preceq_T$ is our first candidate preference relation. In \cite[Proposition~4.4]{depascaleRockafellarTheoremCyclically2025}, we showed that it is indeed total, hence a preference relation, when $\X = \R^d$ and $T$ is single-valued, locally Lipschitz and never vanishes, but it is not total in general, even when $T$ is maximal. For instance, $T \coloneqq \nop f$ with $f \coloneqq \abs{\cdot}$ on $\R$ is maximal cyclically quasi-monotone, and the lower contour sets of $\preceq_T$ are $[0,x]$ if $x \geq 0$ and $[x,0]$ if $x \leq 0$ (see \cite[Section~3]{depascaleRockafellarTheoremCyclically2025} for the one-dimensional case). This is why, in the present paper, we establish further properties of $\preceq_T$, notably when $T$ is maximal, that will allow us to then extend it into a preference relation.

\section{Maximal cyclically quasi-monotone operators and existence of potentials}\label{sec_existence}

In this section, we study the question of extending $\preceq_T$ into a preference relation whose lower contour sets are closed and convex and to which $T$ is normal (\Cref{thm_chain}). In the example of $T = \nop f$ with $f = \abs{\cdot}$ on $\R$ (see \Cref{sec_canonical_preorders}), unions of lower contour sets, such as $C^T(x) \cup C^T(-x) = [-\abs{x},\abs{x}]$, are the lower contour sets of a quasi-convex preference relation. This fails in general: in \cite[Example~5.5]{depascaleRockafellarTheoremCyclically2025}, some unions of lower contour sets of $\preceq_T$ are not convex. However, the operator of this example is not maximal. In \Cref{sec_maximal_operators}, we show that unions of lower contour sets of $\preceq_T$ are always convex when $T$ is maximal, which is why we first replace $T$ by a maximal extension in the proof of \Cref{thm_chain}. In \Cref{sec_extension_preorders}, we select such unions through a total extension of $\preceq_T$ that preserves its strict part, which keeps $T$ normal. We prove \Cref{thm_chain} in \Cref{sec_proof_existence}. When $\X$ is separable, representing the resulting preference relation by a function (\Cref{cor_rader_separable}) yields a quasi-convex potential, which proves item~\labelcref{main_thm_existence} of \Cref{main_thm}.

\subsection{Maximal operators and convexity of unions of lower contour sets}\label{sec_maximal_operators}

Recall that a cyclically quasi-monotone operator is maximal cyclically quasi-monotone if it admits no cyclically quasi-monotone extension other than itself.

\begin{prp}\label{existence_maximal_extension}
	Every cyclically quasi-monotone operator $T \colon \X \multimap \X^*$ admits a maximal cyclically quasi-monotone extension.
\end{prp}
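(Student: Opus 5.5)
We apply Zorn's lemma to the family $\mathcal P$ of all cyclically quasi-monotone extensions of $T$, identified with their graphs in $\X \times \X^*$ and partially ordered by inclusion. The family is nonempty, since $T \in \mathcal P$. A maximal element of $(\mathcal P, \subseteq)$ is precisely a maximal cyclically quasi-monotone extension of $T$. Indeed, any cyclically quasi-monotone extension of such an element is also an extension of $T$, so it lies in $\mathcal P$ and must coincide with it. It therefore suffices to show that every chain in $\mathcal P$ has an upper bound in $\mathcal P$.

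Let $(T_\alpha)_{\alpha \in A}$ be a nonempty chain in $\mathcal P$, and let $S$ be the union of the graphs, so that $S(x) = \bigcup_\alpha T_\alpha(x)$. Clearly $T \subseteq T_\alpha \subseteq S$ for each $\alpha$. The only point to check is that $S$ is cyclically quasi-monotone. By \labelcref{CQM_no_ascending_cycle}, this amounts to showing that there is no $S$-ascending cycle. Suppose $x_0, \ldots, x_N = x_0$ were one, with ascending directions $x_i^* \in S(x_i)$ for $i < N$. Each pair $(x_i, x_i^*)$ belongs to the graph of some $T_{\alpha_i}$. Because there are only finitely many indices $i < N$ and the $T_\alpha$ are totally ordered by inclusion, one of them, say $T_{\alpha_j}$, contains all the others. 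Then $x_i^* \in T_{\alpha_j}(x_i)$ for every $i < N$, so the same cycle is a $T_{\alpha_j}$-ascending cycle. This contradicts the cyclical quasi-monotonicity of $T_{\alpha_j}$.

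The main point is this finiteness argument. Cyclical quasi-monotonicity is a condition on finitely many points of the graph at a time, so it passes to unions of chains. The empty chain is handled by taking $T$ itself as its upper bound. Zorn's lemma then yields a maximal element of $\mathcal P$, which is the desired maximal cyclically quasi-monotone extension of $T$.
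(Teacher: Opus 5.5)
Your proof is correct and is essentially the paper's argument: you apply Zorn's lemma to the cyclically quasi-monotone extensions of $T$ ordered by inclusion, and you show that the union of a chain is cyclically quasi-monotone because any ascending cycle uses only finitely many pairs, so it already lies in a single member of the chain. Your explicit treatment of the empty chain, and your check that a maximal element of $\mathcal P$ is indeed a maximal extension, are details the paper leaves implicit.
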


\begin{proof}
	Consider the set
	\[\{S : S \text{ is cyclically quasi-monotone and } T \subseteq S\},\]
	ordered by inclusion, which is nonempty since it contains $T$. If $(T_i)_{i \in I}$ is a nonempty chain in this set, then $\bigcup_{i\in I} T_i$ is a cyclically quasi-monotone extension of $T$, since a cycle involves finitely many points, hence is contained in some $T_i$. Thus this set is inductive and, by Zorn's lemma, it admits a maximal element.
\end{proof}

\begin{lem}\label{F_equals_normal_cone}
	Let $T \colon \X \multimap \X^*$ be a cyclically quasi-monotone operator. Then
	\begin{equation}\label{F_included_normal_cone}
		\forall x \in \X, \quad T(x) \subseteq \nop_{C^T(x)}(x).
	\end{equation}
	If moreover $T$ is maximal cyclically quasi-monotone, then
	\[\forall x \in \X, \quad T(x) = \nop_{C^T(x)}(x) = (S^T(x) - x)^\circ.\]
\end{lem}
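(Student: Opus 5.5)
For the first inclusion, I fix $x \in \X$, $x^* \in T(x)$ and $y \in C^T(x)$, and aim to show $\pair{x^*}{y-x} \leq 0$. Suppose instead $\pair{x^*}{y-x} > 0$. Then $(x,y)$ is a $T$-ascending path, so $x \prec_T y$. Since $y \preceq_T x$, the mixed transitivity \labelcref{strlarstr} of \Cref{transitivity_properties} gives $x \prec_T x$. This contradicts irreflexivity of $\prec_T$, so $x^* \in \nop_{C^T(x)}(x)$.

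For the maximal case, I first record the chain of inclusions
\[T(x) \subseteq \nop_{C^T(x)}(x) \subseteq (S^T(x)-x)^\circ .\]
The first is \labelcref{F_included_normal_cone}. The second holds because $S^T(x) \subseteq C^T(x)$, by \Cref{transitivity_properties} \labelcref{item_strict_implies_large}, and polars reverse inclusion. It then suffices to show $(S^T(x)-x)^\circ \subseteq T(x)$.

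Let $x^* \in (S^T(x)-x)^\circ$ and define $T'$ by adding $x^*$ to $T(x)$, leaving $T$ unchanged elsewhere. By maximality, it is enough to show that $T'$ is cyclically quasi-monotone; then $T' = T$, so $x^* \in T(x)$.

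Suppose $T'$ admitted an ascending cycle. Every ascending cycle of $T'$ not using the new direction is a $T$-ascending cycle, which is excluded by \labelcref{CQM_no_ascending_cycle}. So some step uses $x^*$ at the point $x$. Rotating the cycle, I may assume it starts there: $x_0 = x$, $\pair{x^*}{x_1-x} > 0$, and the rest is a $T'$-ascending path from $x_1$ back to $x$.

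Among all such cycles I take one with the fewest uses of $x^*$. The subtle point is that the return path might itself pass through $x$ using $x^*$ again. If it does, I cut the cycle at the first return to $x$, which gives a shorter cycle of the same form. So I may assume the path from $x_1$ to $x$ uses only directions from $T$.

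That path is then $T$-ascending, so $x_1 \prec_T x$, that is, $x_1 \in S^T(x)$. Hence $\pair{x^*}{x_1-x} \leq 0$, contradicting the first step of the cycle. So $T'$ is cyclically quasi-monotone, and maximality forces $x^* \in T(x)$, which closes the chain of inclusions.

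The degenerate case $x_1 = x$ cannot occur, since $\pair{x^*}{0} = 0$ is not positive. No closure issue arises either, because the polar is taken of $S^T(x)$ itself. The main obstacle is only the bookkeeping of multiple occurrences of $x$ in the cycle, which the cutting argument at the first return handles.
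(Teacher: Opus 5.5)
Your proof is correct and follows the paper's argument. The first inclusion comes from mixed transitivity and irreflexivity of $\prec_T$, and the reverse inclusion comes from adjoining $x^*$ to $T(x)$, ruling out ascending cycles, and invoking maximality; the only difference is bookkeeping, since the paper takes a cycle of minimal length so that all its points are distinct, whereas you cut at the first return to $x$ (which on its own already suffices, so the minimization over the number of uses of $x^*$ is unnecessary).
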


\begin{proof}
	The first part is \cite[Proposition~2.17~(ii)]{depascaleRockafellarTheoremCyclically2025}, whose proof we recall for completeness. Let $x \in \X$ and $x^* \in T(x)$. If we had $\pair{x^*}{y-x} > 0$ for some $y \in C^T(x)$, then $x \prec_T y \preceq_T x$, which implies $x \prec_T x$ by \Cref{transitivity_properties}: a contradiction. Thus $\pair{x^*}{y-x} \leq 0$ for every $y \in C^T(x)$, which means that $x^* \in \nop_{C^T(x)}(x)$, and \labelcref{F_included_normal_cone} is proved.

	Now assume that $T$ is maximal and let $x \in \X$. Since $S^T(x) \subseteq C^T(x)$, \labelcref{F_included_normal_cone} yields
	\[T(x) \subseteq \nop_{C^T(x)}(x) \subseteq (S^T(x) - x)^\circ,\]
	so that it suffices to show $(S^T(x) - x)^\circ \subseteq T(x)$. Let $x^* \in (S^T(x) - x)^\circ$ and define $\tilde T$ by $\tilde T(y) = T(y)$ for $y \neq x$ and $\tilde T(x) = T(x) \cup\{x^*\}$. Let us show that $\tilde T$ is still cyclically quasi-monotone.

	By contradiction, assume that there exists a $\tilde T$-ascending cycle $x_0,x_1, \ldots, x_n = x_0$ with corresponding $x_i^* \in \tilde T(x_i)$ for every $i < n$. Let us assume that it has minimal length, i.e., $n \geq 2$ is minimal, so that $x_i \neq x_j$ for every $0 \leq i \neq j < n$. Since $T$ is cyclically quasi-monotone, necessarily one of the $(x_i,x_i^*)$ is equal to $(x,x^*)$, say $(x_0,x_0^*) = (x,x^*)$. By minimality of $n$, for every $1 \leq i < n$, we have $x_i \neq x$ and thus $x_i^* \in \tilde T(x_i) = T(x_i)$. This implies that $x_1 \prec_T x_n = x$, i.e., $x_1 \in S^T(x)$. But $\pair{x^*}{x_1-x} = \pair{x_0^*}{x_1-x_0} > 0$, which contradicts $x^* \in (S^T(x) - x)^\circ$.

	Therefore $\tilde T$ is cyclically quasi-monotone, and since $T \subseteq \tilde T$ and $T$ is maximal, we have $T = \tilde T$, which implies that $x^* \in T(x)$. We get the result since $x^*$ was an arbitrary element of $(S^T(x) - x)^\circ$.
\end{proof}

The key property of maximal operators for our purposes is the following convexity property of the lower contour sets $C^T(x)$ of $\preceq_T$, which is the technical core of this section and relies on \Cref{F_equals_normal_cone}. In this lemma, as in \Cref{thm_chain} below, we work in smoothable Banach spaces. Recall that a Banach space $\X$ is \emph{smoothable} if it admits an equivalent norm that is Gâteaux-differentiable on $\X \setminus \{0\}$. This class is relevant for our purposes since it contains all reflexive spaces as well as all separable spaces, and we have seen that separability is necessary for the existence of potentials (\Cref{prp_nonseparable_counterexample}).

\begin{lem}\label{convexity_stable_by_union}
	Let $T \colon \X \multimap \X^*$ be a maximal cyclically quasi-monotone operator on a smoothable Banach space $\X$. Then for every set of points $A \subseteq \X$,
	\[\bigcup_{x\in A} C^T(x) \text{ is a convex set}.\]
\end{lem}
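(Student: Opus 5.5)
The plan is to reduce to a segment between two contour sets. Fix $x,y \in A$, $a \in C^T(x)$, $b \in C^T(y)$, and suppose by contradiction that some point of $[a,b]$ lies outside $C^T(x) \cup C^T(y)$. Since this union is at most a union of two sets, it suffices to show $[a,b] \subseteq C^T(x) \cup C^T(y)$; the general case then follows because any convex combination of two points of $\bigcup_{x\in A} C^T(x)$ involves only two indices. By \Cref{contour_sets_closed_convex}, the sets $\{t \in [0,1] : a + t(b-a) \in C^T(x)\}$ and $\{t : a + t(b-a) \in C^T(y)\}$ are closed intervals $[0,t_1]$ and $[t_2,1]$ with $t_1 < t_2$. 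Set $p \coloneqq a + t_1(b-a)$ and $q \coloneqq a + t_2(b-a)$, so $p \neq q$, $p \in C^T(x)$, $q \in C^T(y)$, and the open segment $]p,q[$ avoids both sets.

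The key structural input comes from maximality, via \Cref{F_equals_normal_cone}. Since $p \preceq_T x$, transitivity gives $C^T(p) \subseteq C^T(x)$. Because normal cones shrink as the set grows,
\[
\nop_{C^T(x)}(p) \subseteq \nop_{C^T(p)}(p) = T(p).
\]
Thus every normal functional to the closed convex set $C^T(x)$ at $p$ already belongs to $T(p)$. Symmetrically, $\nop_{C^T(y)}(q) \subseteq T(q)$.

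Now suppose we can find $x^* \in \nop_{C^T(x)}(p)$ with $\pair{x^*}{q-p} > 0$ and $y^* \in \nop_{C^T(y)}(q)$ with $\pair{y^*}{p-q} > 0$. Then $(p,q,p)$ is a $T$-ascending cycle, contradicting \eqref{CQM_no_ascending_cycle}. If the exact boundary points do not admit such functionals, we would run the same argument at nearby points $p' \in C^T(x)$ and $q' \in C^T(y)$, possibly off the segment, with functionals strictly positive on $q'-p'$ and $p'-q'$; we still get $T$-ascending directions by the inclusion above applied at $p'$ and $q'$, and still a cycle.

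The main obstacle is producing these normal functionals that are \emph{strictly} positive in the direction of the other set. In infinite dimension, $C^T(x)$ may have empty interior, so supporting functionals at a prescribed boundary point need not exist, and nearest points need not exist either. This is where smoothability enters. I would first renorm $\X$ with an equivalent Gâteaux-smooth norm; this changes neither $T$, nor the relations $\preceq_T$, nor convexity. I would then apply a smooth variational principle, Borwein--Preiss, to the convex Lipschitz function $u \mapsto \operatorname{dist}(u, C^T(x))$ near a point $z \in ]p,q[$. This should give a point $z'$ near $z$ at which the perturbed distance function is Gâteaux differentiable, with derivative close to a norm-one functional $\xi$. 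Next, I would pick an approximate nearest point $p' \in C^T(x)$ to $z'$. The aim is to check that $\xi$ (or a slight modification) lies in $\nop_{C^T(x)}(p')$, up to an error that can be absorbed, and that $\pair{\xi}{z'-p'} \approx \operatorname{dist}(z', C^T(x)) > 0$.

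Doing the same for $C^T(y)$ and choosing $z$, $z'$ so that both approximate projections point toward each other should yield the strict inequalities and hence the ascending cycle. Controlling the approximation errors so that the inequalities stay strict, and so that the functionals are exact normals rather than $\eps$-normals, is the delicate step. If exactness fails, I would instead use that $\nop_{C^T(x)}(p')$ contains every functional attaining its supremum on $C^T(x)$ at $p'$, and invoke the Bishop--Phelps theorem to perturb the approximate functional into one attaining its supremum at a nearby support point.
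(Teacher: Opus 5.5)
\textbf{There is a genuine gap: the step that carries the lemma is only announced, not proved.} Your reduction to a segment is correct. So are the exit points $p\in C^T(x)\setminus C^T(y)$ and $q\in C^T(y)\setminus C^T(x)$, and the inclusion $\nop_{C^T(x)}(p')\subseteq\nop_{C^T(p')}(p')=T(p')$ for $p'\in C^T(x)$. What is missing is the existence of \emph{exact} normals $x^*\in\nop_{C^T(x)}(p')$ and $y^*\in\nop_{C^T(y)}(q')$ with $\pair{x^*}{q'-p'}>0$ and $\pair{y^*}{p'-q'}>0$ simultaneously; the proposal only says this \enquote{should} follow.

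At the exit points it can fail outright. The direction $q-p$ may be tangent to $C^T(x)$ at $p$ (think of a disk and a tangent segment), and then every element of $\nop_{C^T(x)}(p)$ is nonpositive on $q-p$.

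For approximate projections $p',q'$ of a gap point $z$, with $\xi$ close to a norming functional of $z-p'$, one has $\pair{\xi}{q'-p'}\approx\norm{z-p'}-\pair{\xi}{z-q'}$. The sign of this depends on how $z$ is chosen and on the \emph{rotundity} of the norm, not on its smoothness. For a Gâteaux-smooth norm whose sphere has flat pieces, it can vanish even for exact projections from a point equidistant to both sets. The variational principles and Bishop--Phelps can produce exact normals at nearby points, but they give no control over the signs of both pairings at once. In a Hilbert space your two-sided strategy can be completed: take $z$ on the gap with $d(z,C^T(x))=d(z,C^T(y))$ and use the isosceles triangle formed by $z$ and its two projections. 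That argument uses rotundity, though, and is not what you wrote.

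\textbf{The missing idea is the mixed transitivity} of \Cref{transitivity_properties}~\labelcref{item_mixed_transitivity}, which makes the second functional unnecessary. Suppose $p'\in C^T(x)\setminus C^T(y)$ and some $\xi\in\nop_{C^T(x)}(p')\subseteq T(p')$ satisfies $\pair{\xi}{c-p'}>0$ for some $c\in C^T(y)$. Then $p'\prec_T c\preceq_T y$ gives $p'\prec_T y$, hence $p'\in C^T(y)$, a contradiction. This is exactly how the paper treats Ekeland points outside $C\cap C'$. Gâteaux smoothness is used there only when all Ekeland points lie in $C\cap C'$, to force the normals to the two sets to align.

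Combined with your segment reduction, this closes the argument directly:
\begin{enumerate}
\item Set $\delta\coloneqq d(p,C^T(y))>0$, and pick $z=(1-t)a+tb$ on the gap with $\norm{z-p}<\delta/4$.
\item Apply Ekeland as in the paper to $\norm{\cdot-z}+\chi_{C^T(x)}$ with $\eps<\min\{1/2,\delta/2\}$, using any element of $\partial\norm{\cdot}(p'-z)$ in place of the derivative.
\item This yields $p'\in C^T(x)$ with $\norm{p'-p}<\delta$, hence $p'\notin C^T(y)$, together with $\xi\in\nop_{C^T(x)}(p')$ such that $\pair{\xi}{z-p'}>0$.
\item Since $\pair{\xi}{a-p'}\leq 0$, this forces $\pair{\xi}{b-p'}>0$, and the contradiction above applies with $c=b$.
\end{enumerate}
This route never meets a point of $C^T(x)\cap C^T(y)$, so it does not even need the smoothness of the norm.
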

\begin{proof}
	Since convexity only involves pairs of points, it suffices to show that for every $x, x' \in A$, the set
	\[K \coloneqq C \cup C', \quad \text{where } C \coloneqq C^T(x) \text{ and } C' \coloneqq C^T(x'),\]
	is convex. Notice that $K$ is closed by \Cref{contour_sets_closed_convex}. Since an intersection of closed half-spaces is convex, it suffices to separate $K$ from any $y \notin K$ by a closed affine hyperplane, namely to find $v^* \in \X^*$ such that
	\begin{equation}\label{separation_union_convex_sets}
		\sup_{z \in K} \pair{v^*}{z} < \pair{v^*}{y}.
	\end{equation}
	Let us fix $y \notin K$, an equivalent norm $\norm{\cdot}$ on $\X$ that is Gâteaux-differentiable on $\X \setminus \{0\}$, and denote by $\norm{\cdot}^*$ its dual norm on $\X^*$ and by $\norm{\cdot}'$ its derivative.

	\paragraph{Simple case: when a projection exists.} Assume first that there exists a projection of $y$ onto $K$, i.e., a point $p \in K$ such that
	\[\norm{p-y} = \inf_{z \in K} \norm{z-y},\]
	which is always the case if $\X$ is finite-dimensional, since $K$ is closed. We set $u^* \coloneqq \norm{\cdot}'(p-y)$, which is well defined since $p \neq y$, and satisfies $\pair{u^*}{p-y} = \norm{p-y} > 0$. If $p \in C$, then $p$ minimizes $\norm{\cdot - y}$ on the convex set $C \subseteq K$, so that by Fermat's rule $-u^* \in \nop_C(p)$, and similarly with $C'$. We distinguish two cases, the case $p \in C' \setminus C$ being symmetric to the first one.

	\textit{Case $p \in C \setminus C'$.} We already know that $-u^* \in \nop_C(p)$, i.e., $\pair{-u^*}{z-p} \leq 0$ for every $z \in C$, and it remains to show that the same holds for $z \in C'$. First notice that, since $C^T(p) \subseteq C$ because $p \in C$, \Cref{F_equals_normal_cone} yields
	\[-u^* \in \nop_C(p) \subseteq \nop_{C^T(p)}(p) = T(p).\]
	Assume by contradiction that $\pair{-u^*}{c'-p} > 0$ for some $c' \in C'$. Then $p \prec_T c'$, and since $c' \preceq_T x'$, \Cref{transitivity_properties} implies $p \prec_T x'$, hence $p \preceq_T x'$, i.e., $p \in C'$: a contradiction with $p \notin C'$. Thus $\pair{-u^*}{z-p} \leq 0$ for every $z \in C'$, hence for every $z \in K$.

	\textit{Case $p \in C \cap C'$.} Then $-u^* \in \nop_C(p) \cap \nop_{C'}(p)$, i.e., $\pair{-u^*}{z-p} \leq 0$ for every $z \in C \cup C' = K$.

	In both cases, $v^* \coloneqq -u^*$ satisfies
	\[\sup_{z \in K} \pair{v^*}{z} \leq \pair{v^*}{p} < \pair{v^*}{y},\]
	which is \labelcref{separation_union_convex_sets}.

	\paragraph{General case: without projection.} In general, the existence of a projection is not guaranteed, thus we resort to Ekeland's variational principle instead. We set $r \coloneqq \inf_{z \in K} \norm{z-y}$, which is positive since $K$ is closed and $y \notin K$. By Ekeland's variational principle applied to the function $\varphi \colon z \mapsto \norm{z-y} + \chi_K(z)$, for every $\eps \in \mathopen]0,\eps_0\mathclose[$ where $\eps_0 \coloneqq 1/2$, there exists a point $p_\eps \in K$ such that $\norm{p_\eps-y} \leq r + \eps$ and
	\begin{equation}\label{ekeland_minimizer}
		p_\eps \in \argmin_{z \in \X} \norm{z-y} + \chi_K(z) + \eps \norm{z-p_\eps}.
	\end{equation}
	If $p_\eps \in C$, since $C \subseteq K$, we also have
	\[p_\eps \in \argmin_{z \in \X} \norm{z-y} + \chi_C(z) + \eps \norm{z-p_\eps}.\]
	This is a convex problem, thus by Fermat's rule and the sum rule (the qualification condition being satisfied because the norm is continuous), using the differentiability of the norm and the fact that $p_\eps \neq y$, we get
	\begin{gather}
		0 \in \partial \norm{\cdot}(p_\eps-y) + \partial \chi_C(p_\eps) + \eps\partial \norm{\cdot}(0),\notag\\
		\shortintertext{i.e.,}
		0 = u^*_\eps + v^*_\eps + \eps w^*_\eps \quad\text{where}\quad u^*_\eps = \norm{\cdot}'(p_\eps-y),\ v^*_\eps \in \nop_C(p_\eps),\ \norm{w^*_\eps}^* \leq 1.\notag
	\end{gather}
	Compared with the simple case, the direction $-u^*_\eps$ is perturbed by the term $\eps w^*_\eps$, but $v^*_\eps$ still separates $y$ from $p_\eps$. Indeed, since $\norm{u^*_\eps}^* = 1$ and $\pair{u^*_\eps}{p_\eps -y} = \norm{p_\eps -y}$, we have
	\begin{equation} \label{v_near_direction_y_minus_projection}
		\begin{aligned}
			\pair{v^*_\eps}{y-p_\eps} &= \pair{u^*_\eps}{p_\eps -y} + \eps \pair{w^*_\eps}{p_\eps -y}\\
			& \geq \norm{p_\eps-y} - \eps \norm{w^*_\eps}^* \norm{p_\eps-y}\\
			& \geq \frac{1}{2} \norm{p_\eps-y} \geq \frac{r}{2} > 0.
		\end{aligned}
	\end{equation}
	Similarly, if $p_\eps \in C'$, we get $0 = u^*_\eps + \tilde v^*_\eps + \eps \tilde w^*_\eps$ with $\tilde v^*_\eps \in \nop_{C'}(p_\eps)$ and $\norm{\tilde w^*_\eps}^* \leq 1$, and \labelcref{v_near_direction_y_minus_projection} holds with $\tilde v^*_\eps$ instead of $v^*_\eps$. We distinguish two cases.

	\textit{Case $p_\eps \notin C \cap C'$ for some $\eps \in \mathopen]0,\eps_0\mathclose[$.} By symmetry, we may assume that $p_\eps \in C \setminus C'$. The argument of the simple case applies with $p_\eps$ and $v^*_\eps$ instead of $p$ and $-u^*$. Indeed, $v^*_\eps \in \nop_C(p_\eps) \subseteq \nop_{C^T(p_\eps)}(p_\eps) = T(p_\eps)$, and if $\pair{v^*_\eps}{c'-p_\eps} > 0$ for some $c' \in C'$, then $p_\eps \prec_T c' \preceq_T x'$ yields $p_\eps \prec_T x'$, hence $p_\eps \in C'$: a contradiction. Hence, using \labelcref{v_near_direction_y_minus_projection},
	\[\sup_{z \in K} \pair{v^*_\eps}{z} \leq \pair{v^*_\eps}{p_\eps} < \pair{v^*_\eps}{y},\]
	and $v^* \coloneqq v^*_\eps$ satisfies \labelcref{separation_union_convex_sets}.

	\textit{Case $p_\eps \in C \cap C'$ for every $\eps \in \mathopen]0,\eps_0\mathclose[$.} In that case, the first-order optimality conditions on $C$ and on $C'$ both hold, i.e.,
	\[0 = u^*_\eps + v^*_\eps + \eps w^*_\eps = u^*_\eps+\tilde v^*_\eps+\eps \tilde w^*_\eps.\]
	Contrary to the simple case, we cannot conclude directly, since $v^*_\eps$ and $\tilde v^*_\eps$ may differ. This is where the differentiability of the norm plays a crucial role: since $u^*_\eps$ is the same in both conditions, we get
	\begin{equation}\label{directions_alignment}
		\norm{v^*_\eps-\tilde v^*_\eps}^* = \eps\norm{w^*_\eps-\tilde w^*_\eps}^* \leq 2\eps.
	\end{equation}
	Since $\norm{v^*_\eps}^* \leq \norm{u^*_\eps}^* + \eps \leq 2$, the family $(v^*_\eps)_{\eps \in \mathopen]0,\eps_0\mathclose[}$ is bounded and by the Banach--Alaoglu theorem, there exists a weak-$*$ limit $v^*$ along some subnet $(v^*_{\eps_i})_{i \in I}$ such that $\eps_i \to 0$ along $I$. By \labelcref{directions_alignment} we have at the same time $v^*_{\eps_i} \rightharpoonup v^*$ and $\tilde v^*_{\eps_i} \rightharpoonup v^*$ weakly-$*$ along $I$. Now let us show that $v^*$ satisfies \labelcref{separation_union_convex_sets}. Take $c \in K$, for example $c \in C$. By \labelcref{v_near_direction_y_minus_projection} and since $\pair{v^*_\eps}{c-p_\eps} \leq 0$, we have
	\[\pair{v^*_\eps}{y-c} \geq \pair{v^*_\eps}{y-p_\eps} \geq \frac{r}{2}.\]
	Taking $\eps = \eps_i$ and passing to the limit along $i \in I$ yields $\pair{v^*}{y-c} \geq r/2$, and the same holds for $c \in C'$, using $\tilde v^*_{\eps_i}$ instead of $v^*_{\eps_i}$. We have shown that
	\[\sup_{z\in K} \pair{v^*}{z} \leq \pair{v^*}{y} - \frac{r}{2} < \pair{v^*}{y}. \qedhere\]
\end{proof}

\subsection{Extension of preorders and normality}\label{sec_extension_preorders}

To select unions of lower contour sets, we extend $\preceq_T$ into a total preorder by means of the following classical result, which generalizes to preorders Szpilrajn's theorem \cite{szpilrajnLextensionLordrePartiel1930}, stating that every partial order can be extended into a total order.

\begin{thm}[Arrow--Hansson {\cite{arrowSocialChoiceIndividual1951,hanssonChoiceStructuresPreference1968}}]\label{pre-order_extension}
	For every preorder $\preceq$ on a set $X$, there exists a total preorder $\preceq'$ on $X$ that extends $\preceq$ and preserves its strict part, i.e., such that for every $x,y \in X$,
	\begin{align*}
		x \preceq y &\implies x \preceq' y,\\
		(x \preceq y \text{ and } y \not\preceq x) &\implies y \not\preceq' x.
	\end{align*}
\end{thm}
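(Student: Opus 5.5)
The plan is to deduce the statement from Szpilrajn's theorem, applied to the quotient order, followed by an enlargement step.

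\emph{Step 1: pass to the quotient.} Define $x \sim y$ iff $x \preceq y$ and $y \preceq x$. By reflexivity and transitivity of $\preceq$ this is an equivalence relation. Moreover $\preceq$ descends to a relation $\le$ on $X/{\sim}$, setting $[x] \le [y]$ iff $x \preceq y$; transitivity shows this does not depend on the representatives. It is a partial order: it is reflexive and transitive, and it is antisymmetric precisely because mutually comparable classes coincide.

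\emph{Step 2: extend on the quotient.} By Szpilrajn's theorem, which is Zorn's lemma applied to the family of partial orders extending $\le$ ordered by inclusion, there is a total order $\le'$ on $X/{\sim}$ containing $\le$. The only genuine input here is the standard one-step extension lemma. If $a,b$ are incomparable for a partial order $R$, then the transitive closure of $R \cup \{(a,b)\}$, namely $R \cup \{(u,v) : u \mathrel{R} a,\ b \mathrel{R} v\}$, is again a partial order. For antisymmetry, a pair $(u,v)$ and its reverse both lying in the new relation would force $b \mathrel{R} a$, contradicting incomparability. A maximal element of the family is therefore total. Zorn's lemma applies because the union of a chain of partial orders extending $\le$ is again such an order.

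\emph{Step 3: pull back.} Define $x \preceq' y$ iff $[x] \le' [y]$. This is a total preorder, being reflexive, transitive and total since $\le'$ is.

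It extends $\preceq$: if $x \preceq y$ then $[x] \le [y]$, hence $[x] \le' [y]$.

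It preserves the strict part. Suppose $x \preceq y$ and $y \not\preceq x$. Then $[x] \le [y]$ and $[x] \neq [y]$, since $x \not\sim y$. Hence $[x] \le' [y]$ with $[x]\neq[y]$, and antisymmetry of $\le'$ gives $[y] \not\le' [x]$, that is, $y \not\preceq' x$.

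\emph{Main obstacle.} The only nontrivial point is preserving the strict part. Extending $\preceq$ directly by Zorn could wrongly collapse strictly ordered elements into indifference. Working with a partial order on the quotient avoids this, because antisymmetry of the total extension forbids such collapse. Everything else is routine bookkeeping.
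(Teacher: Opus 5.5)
Your proof is correct, and it is essentially the argument the paper gives in its remark after the statement (otherwise the paper only cites Arrow and Hansson): apply Szpilrajn's theorem to the partial order induced on $X/{\sim}$ and lift the resulting total order back to $X$. As the paper notes, this construction creates no new equivalent points, since antisymmetry of $\le'$ forces $[x]=[y]$ whenever $x \preceq' y$ and $y \preceq' x$.
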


Preserving the strict part is essential: without this requirement, the trivial relation for which all points are equivalent would be a total preorder extending $\preceq$. We refer to \cite{bossertConsistencyChoiceRationality2010,catoSzpilrajnArrowSuzumura2012} for modern proofs of this result and of related extension theorems. For the record, $\preceq'$ can moreover be chosen so as to create no new equivalent points, meaning that two points equivalent for $\preceq'$ are already equivalent for $\preceq$: it suffices to apply Szpilrajn's theorem to the partial order induced by $\preceq$ on the quotient of $X$ by its equivalence relation, and to lift the resulting total order to $X$.

In the next lemma, we show that $T$ is normal to the lower contour sets of a preference relation $\preceq$ if and only if $\prec_T$ is contained in the strict part of $\preceq$.

\begin{lem}\label{normality_strict_part}
	Let $T \colon \X \multimap \X^*$ be an operator and $\preceq$ be a preference relation on $\X$. The following assertions are equivalent.
	\begin{enumerate}[(i)]
		\item\label{item_normality} $T \subseteq \nop_\preceq$.
		\item\label{item_strict_part} $\prec_T$ is contained in the strict part of $\preceq$, i.e., for every $x, y \in \X$,
		\[x \prec_T y \implies y \not\preceq x.\]
	\end{enumerate}
\end{lem}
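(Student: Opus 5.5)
The plan is to prove both implications directly from the definitions. The tools are formula \labelcref{normal_cone_operator_preference}, namely $\nop_\preceq(x) = \nop_{C_\preceq(x)}(x)$, and the definition of $T$-ascending paths. The only nontrivial point is (i)$\Rightarrow$(ii), where one has to pass from single ascending steps to ascending paths. For this I will use the transitivity of $\preceq$ together with the fact that the strict part of a preference relation is transitive.

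\textit{(i)$\Rightarrow$(ii).} First I treat a single step. Let $x^* \in T(x)$ with $\pair{x^*}{y-x} > 0$. By (i), $x^* \in \nop_{C_\preceq(x)}(x)$, so $\pair{x^*}{z-x} \leq 0$ for every $z \preceq x$. Hence $y \not\preceq x$, and by totality $x \preceq y$; that is, $x$ is strictly below $y$ for $\preceq$. Now take a general $T$-ascending path $x_0 = x, x_1, \ldots, x_N = y$. Each consecutive pair satisfies $x_i \preceq x_{i+1}$ and $x_{i+1} \not\preceq x_i$. By transitivity, $x \preceq y$. Suppose $y \preceq x$. Then $x_1 \preceq x_N = y \preceq x = x_0$, which would give $x_1 \preceq x_0$, a contradiction. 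So I only need $x_1 \preceq y$, which follows from transitivity along $x_1, \ldots, x_N$; when $N=1$ it is trivial since $x_1 = y$. Therefore $y \not\preceq x$.

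\textit{(ii)$\Rightarrow$(i).} Let $x^* \in T(x)$. I have to show $\pair{x^*}{z-x} \leq 0$ for every $z \in C_\preceq(x)$. Suppose instead that $\pair{x^*}{z-x} > 0$ for some $z \preceq x$. Then $(x,z)$ is a $T$-ascending path of length one, so $x \prec_T z$. By (ii) this gives $z \not\preceq x$, contradicting $z \preceq x$. Hence $x^* \in \nop_{C_\preceq(x)}(x) = \nop_\preceq(x)$.

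Neither direction uses cyclical quasi-monotonicity or any convexity. The only mild care needed is in (i)$\Rightarrow$(ii): a strict preference at each step of the path must propagate to a strict preference between the endpoints. This rests on transitivity and totality of $\preceq$, as shown above.
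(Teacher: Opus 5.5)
Your proof is correct and matches the paper's argument: for (ii)$\Rightarrow$(i) you use the one-step normality argument, and for (i)$\Rightarrow$(ii) each ascending step gives $x_{i+1} \not\preceq x_i$, hence $x_i \preceq x_{i+1}$ by totality, so $y \preceq x$ would force $x_1 \preceq x_0$. The transitivity of the strict part announced in your plan, and the intermediate conclusion $x \preceq y$, are never actually used.
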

\begin{proof}
	Assume \labelcref{item_strict_part}, and let $x \in \X$, $x^* \in T(x)$ and $y \preceq x$. If we had $\pair{x^*}{y-x} > 0$, then $x \prec_T y$, hence $y \not\preceq x$ by \labelcref{item_strict_part}: a contradiction. Thus $x^* \in \nop_{C_\preceq(x)}(x) = \nop_\preceq(x)$ by \labelcref{normal_cone_operator_preference}, and \labelcref{item_normality} holds.

	Conversely, assume \labelcref{item_normality}, and let $x, y \in \X$ be such that $x \prec_T y$. Consider a $T$-ascending path $(x_0, \ldots, x_N)$ from $x$ to $y$, with $T$-ascending directions $(x_0^*, \ldots, x_{N-1}^*)$. For every $i \in \{0, \ldots, N-1\}$, since $x_i^* \in \nop_\preceq(x_i) = \nop_{C_\preceq(x_i)}(x_i)$ by \labelcref{item_normality} and $\pair{x_i^*}{x_{i+1}-x_i} > 0$, we have $x_{i+1} \not\preceq x_i$, hence $x_i \preceq x_{i+1}$ by totality. If we had $y \preceq x$, then $x_1 \preceq \cdots \preceq x_N = y \preceq x = x_0$ would give $x_1 \preceq x_0$ by transitivity: a contradiction. Thus $y \not\preceq x$, and \labelcref{item_strict_part} holds.
\end{proof}

\subsection{Existence of a quasi-convex preference relation}\label{sec_proof_existence}

\begin{thm}[Existence of a preference relation]\label{thm_chain}
	Let $T \colon \X \multimap \X^*$ be a cyclically quasi-monotone operator on a smoothable Banach space $\X$. Then there exists a lower semicontinuous quasi-convex preference relation $\preceq$ on $\X$ such that $T \subseteq \nop_\preceq$.
\end{thm}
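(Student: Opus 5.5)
The plan is to combine the four ingredients just established: a maximal extension, the Arrow--Hansson extension, the convexity of unions of lower contour sets, and the lower semicontinuous regularization. By \Cref{existence_maximal_extension}, we replace $T$ by a maximal cyclically quasi-monotone extension $\tilde T \supseteq T$. It suffices to find a suitable $\preceq$ with $\tilde T \subseteq \nop_\preceq$. The reason is that $T$-ascending paths are $\tilde T$-ascending, so $\prec_T \subseteq \prec_{\tilde T}$; \Cref{normality_strict_part} then transfers normality from $\tilde T$ to $T$. From now on we assume $T$ is maximal.

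\textbf{Total preorder.} Apply \Cref{pre-order_extension} to the preorder $\preceq_T$ (\Cref{transitivity_properties}) to obtain a total preorder $\preceq'$. It extends $\preceq_T$ and preserves its strict part. By \Cref{strict_part}, $\prec_T$ lies in the strict part of $\preceq_T$, hence in the strict part of $\preceq'$. Thus \Cref{normality_strict_part} gives $T \subseteq \nop_{\preceq'}$.

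\textbf{Quasi-convexity.} This is the key step: we need every lower contour set $C_{\preceq'}(y)$ to be convex. We claim that
\[C_{\preceq'}(y) = \bigcup_{x \preceq' y} C^T(x).\]
For the inclusion $\supseteq$: if $w \preceq_T x$ and $x \preceq' y$, then $w \preceq' x \preceq' y$, since $\preceq'$ extends $\preceq_T$. For the inclusion $\subseteq$: take $x = w$ itself, using $w \in C^T(w)$. So $C_{\preceq'}(y)$ is a union of lower contour sets of $\preceq_T$, and \Cref{convexity_stable_by_union} shows it is convex. This is exactly where maximality and smoothability are used.

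\textbf{Closedness.} The sets $C_{\preceq'}(y)$ need not be closed. We therefore pass to $\preceq \coloneqq \overline{\preceq'}$ from \Cref{lsc_regularization}. This is a lower semicontinuous quasi-convex preference relation with $\nop_{\preceq'} \subseteq \nop_{\preceq}$, so $T \subseteq \nop_\preceq$.

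The main obstacle, convexity of unions, has already been isolated in \Cref{convexity_stable_by_union}. What remains is checking the identity above and the reduction to the maximal case, both of which are routine.
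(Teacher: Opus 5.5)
Your proposal is correct and follows the paper's proof essentially step for step: the maximal extension, the Arrow--Hansson extension of $\preceq_T$ preserving its strict part, and the identity $C_{\preceq'}(y) = \bigcup_{x \preceq' y} C^T(x)$ combined with \Cref{convexity_stable_by_union}, then normality via \Cref{strict_part} and \Cref{normality_strict_part}, and finally \Cref{lsc_regularization}. The only difference is harmless: in the reduction to the maximal case, the detour through $\prec_T \subseteq \prec_{\tilde T}$ is valid but unnecessary, since $T \subseteq \tilde T \subseteq \nop_\preceq$ directly.
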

\begin{proof}[Proof of \Cref{thm_chain}]
	First of all, up to replacing $T$ by a maximal extension, which is possible by \Cref{existence_maximal_extension}, we can assume $T$ to be maximal cyclically quasi-monotone. We then consider a total preorder $\preceq'$ that extends $\preceq_T$ and preserves its strict part, as given by \Cref{pre-order_extension}.

	Let us show that $\preceq'$ is quasi-convex. We claim that for every $x \in \X$,
	\begin{equation}\label{C_t_is_union_C}
		C_{\preceq'}(x) = \bigcup_{x' : x' \preceq' x} C^T(x').
	\end{equation}
	The inclusion $C_{\preceq'}(x) \subseteq \bigcup_{x' : x' \preceq' x} C^T(x')$ is trivial because $x' \in C^T(x')$ for every $x'$. For the converse inclusion, consider $x' \preceq' x$ and $z \in C^T(x')$, meaning $z \preceq_T x'$. Since $\preceq'$ extends $\preceq_T$, we deduce that $z\preceq' x'$, and by transitivity of $\preceq'$ we get $z \preceq' x$. We have proved \labelcref{C_t_is_union_C}, which implies by \Cref{convexity_stable_by_union} that $C_{\preceq'}(x)$ is convex, for every $x$.

	Now, since $\prec_T$ is contained in the strict part of $\preceq_T$ (\Cref{strict_part}), which is preserved by $\preceq'$, \Cref{normality_strict_part} yields $T \subseteq \nop_{\preceq'}$.

	Finally, by \Cref{lsc_regularization}, the relation $\preceq \coloneqq \overline{\preceq'}$ is a lower semicontinuous quasi-convex preference relation, and $T \subseteq \nop_{\preceq'} \subseteq \nop_\preceq$.
\end{proof}

As a consequence of \Cref{thm_chain}, the points where a cyclically quasi-monotone operator contains a large vector subspace lie in a small affine subspace. For an operator $T \colon \X \multimap \X^*$ and an integer $k \geq 1$, we set
\[
	B_k(T) \coloneqq \{x \in \X : T(x) \text{ contains a vector subspace of dimension } k\},
\]
so that $B_{k+1}(T) \subseteq B_k(T)$ for every $k \geq 1$.

\begin{cor}[Stratification]\label{stratification}
	Let $T \colon \X \multimap \X^*$ be a cyclically quasi-monotone operator on a smoothable Banach space $\X$. For every integer $k \geq 1$, the set $B_k(T)$ is contained in a closed affine subspace of $\X$ of codimension at least $k$. In particular, if $\X$ has finite dimension $d$, then $B_k(T)$ is contained in an affine subspace of dimension at most $d-k$.
\end{cor}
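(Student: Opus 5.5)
By \Cref{thm_chain}, there is a lower semicontinuous quasi-convex preference relation $\preceq$ with $T \subseteq \nop_\preceq$, so that $B_k(T) \subseteq B_k(\nop_\preceq)$. It therefore suffices to prove the statement for $\nop_\preceq$. The key observation is the following. Suppose $x \in B_k$, so that $\nop_{C_\preceq(x)}(x) = (C_\preceq(x)-x)^\circ$ contains a $k$-dimensional subspace $E_x$. Since a polar cone containing $\pm e^*$ forces $\pair{e^*}{z-x} = 0$ on $C_\preceq(x)$, we get
\[
	C_\preceq(x) \subseteq x + E_x^\perp,
\]
where $E_x^\perp \coloneqq \{z \in \X : \pair{e^*}{z} = 0 \ \forall e^* \in E_x\}$ is a closed subspace of codimension exactly $k$. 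In words, the lower contour set of $x$ lies in a closed affine subspace $A_x$ of codimension $k$.

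Next I would exploit that the lower contour sets form a chain. Set $D \coloneqq \bigcup_{x \in B_k} C_\preceq(x)$. For $x, y \in B_k$, one of $C_\preceq(x)$, $C_\preceq(y)$ contains the other, say $C_\preceq(x) \subseteq C_\preceq(y) \subseteq A_y$. Hence every point of $B_k$ below $y$ lies in $A_y$. I want a single closed affine subspace of codimension at least $k$ containing all of $B_k$. The natural candidate is the closed affine hull $A$ of $B_k$, or of $D$. Its direction space is the closure of the union of the increasing (along the chain) family of linear spaces $V_y \coloneqq \lin(C_\preceq(y) - C_\preceq(y))$, each contained in $E_y^\perp$.

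The main obstacle is to show that the closure of an increasing union of subspaces $V_y$, each of codimension at least $k$, still has codimension at least $k$. For a single countable increasing sequence this is a limiting argument on annihilators. The annihilators $V_y^\perp$ form a decreasing chain of weak-$*$ closed subspaces of $\X^*$, each of dimension at least $k$. The annihilator of the closed union is their intersection, and I would argue that it still has dimension at least $k$. Intersect each $V_y^\perp$ with a weak-$*$ compact ball and pick $k$-dimensional subspaces. Then pass to a limit using Banach--Alaoglu and compactness of the Grassmannian-like set of $k$-tuples of vectors forming, say, an Auerbach-type normalized basis. One must ensure that linear independence survives in the limit. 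For that I would choose $k$ points $z_1, \ldots, z_k \in \X$ that are dual to the spaces in a uniform way, or more simply use finite-dimensional truncations. Dimension at least $k$ of a decreasing intersection of weak-$*$ closed subspaces follows by compactness of $\{(e_1^*, \ldots, e_k^*) \in (\text{ball})^k : \det(\pair{e_i^*}{z_j}) = 1 \}$-type sets, once suitable $z_j$ are fixed. I expect the delicate point to be this nondegeneracy in infinite dimension.

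In finite dimension $d$ everything is elementary. The increasing chain of subspaces $V_y$ stabilizes, so $A$ equals some $x + V_y \subseteq A_y$, of dimension at most $d-k$. This yields the final claim of the corollary.
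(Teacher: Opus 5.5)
Your finite-dimensional argument is correct and matches the paper's proof. The reduction to $\nop_\preceq$ via \Cref{thm_chain} and the inclusion $C_\preceq(x) \subseteq x + E_x^\perp$ are right. The affine hulls of the sets $C_\preceq(x)$, $x \in B_k(T)$, form a chain, and one of maximal dimension contains all the others. This is what \enquote{stabilizes} has to mean, since the chain need not be countable.

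The step you flag as the main obstacle in infinite dimension is not merely delicate: it is false, even for a sequence. In $\X = \ell^2$, let $V_n \coloneqq \operatorname{span}\{e_1, \ldots, e_n\}$. These closed subspaces have infinite codimension, increase, and have dense union. Their annihilators $V_n^\perp = \clspn\{e_m : m > n\}$ are infinite-dimensional, weak-$*$ closed and decreasing, with trivial intersection. No normalization prevents this collapse. For every fixed $z \in \X$, $\sup\{\pair{e^*}{z} : e^* \in V_n^\perp,\ \norm{e^*} \leq 1\}$ is the norm of the tail $(z_m)_{m > n}$, which tends to $0$. So any weak-$*$ limit of $k$-frames chosen in $V_n^\perp$ is $0$, and your determinant condition with fixed $z_1, \ldots, z_k$ eventually has no solution.

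This is not an artifact of your method: the chain above comes from a cyclically quasi-monotone operator, so the infinite-dimensional part of the statement is false.
\begin{itemize}
\item Set $T(e_n) \coloneqq \R e_{n+1}$ for $n \geq 1$ and $T(x) \coloneqq \emptyset$ otherwise.
\item Since $e_{n+1} \in V_n^\perp = \nop_{V_n}(e_n)$, we have $T \subseteq \nop\mathcal C$ with $\mathcal C \coloneqq \{V_n\}_{n \geq 1}$. So $T$ is cyclically quasi-monotone by \Cref{convex_chain_operator_CQM}, and $B_1(T) = \{e_n\}_{n \geq 1}$.
\item By Hahn--Banach, every proper closed affine subspace lies in a closed hyperplane $\{\pair{x^*}{\cdot} = c\}$ with $x^* \neq 0$.
\item Containing all $e_n$ would force $x^*_n = c$ for every $n$, hence $c = 0$ and $x^* = 0$.
\end{itemize}
So $B_1(T)$ lies in no closed affine subspace of codimension at least $1$.

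The paper's proof has the same blind spot. It chooses $x_0$ minimizing the codimension $c$ of the closed affine hull $E_{x_0}$ of $C_\preceq(x_0)$. It then concludes because nested closed affine subspaces of the same \emph{finite} codimension coincide, which requires $c < \infty$. In this example every $E_{e_n}$ must have infinite codimension, since otherwise that very argument would contradict the conclusion just reached. What your argument or the paper's actually proves is the finite-dimensional statement. More generally, both prove the conclusion when some $E_x$, $x \in B_k(T)$, has finite codimension.
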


\begin{proof}
	By \Cref{thm_chain}, there exists a preference relation $\preceq$ on $\X$ such that $T \subseteq \nop_\preceq$, i.e., $T(x) \subseteq \nop_{C_\preceq(x)}(x)$ for every $x \in \X$ by \labelcref{normal_cone_operator_preference}. Let $x \in B_k(T)$ and let $L_x \subseteq T(x)$ be a vector subspace of dimension $k$. For every $x^* \in L_x$ and every $y \in C_\preceq(x)$, we have $\pair{x^*}{y-x} = 0$ since $\pm x^* \in \nop_{C_\preceq(x)}(x)$, thus $C_\preceq(x) \subseteq x + L_x^\perp$, which is a closed affine subspace of codimension $k$. As a consequence, the closed affine hull $E_x$ of $C_\preceq(x)$ has codimension at least $k$.

	Since the lower contour sets of $\preceq$ form a chain, so does the family $(E_x)_{x \in B_k(T)}$. If $B_k(T)$ is empty, there is nothing to prove. Otherwise, pick $x_0 \in B_k(T)$ such that the codimension $c$ of $E_{x_0}$ is minimal. For every $x \in B_k(T)$, either $E_x \subseteq E_{x_0}$, or $E_{x_0} \subseteq E_x$, in which case the codimension of $E_x$ is at most $c$, hence equal to $c$ by minimality, so that $E_x = E_{x_0}$ as nested closed affine subspaces of the same finite codimension. In both cases $x \in E_x \subseteq E_{x_0}$, and we conclude that $B_k(T) \subseteq E_{x_0}$, a closed affine subspace of codimension $c \geq k$.
\end{proof}

\section{Characterization of maximal cyclically quasi-monotone operators}\label{sec_maximal}

In this section, we assume that $\X$ has finite dimension. For a subset $A$ of $\X$ (resp.\ of $\X^*$), we denote by $A^\perp$ its orthogonal, i.e., the set of elements of $\X^*$ (resp.\ of $\X$) that vanish on $A$.

The \emph{lineality space} of a convex cone $\Lambda$ \cite[Section~8]{rockafellarConvexAnalysis1970} is
\[\lin \Lambda \coloneqq \Lambda \cap (-\Lambda),\]
which is the largest vector subspace included in $\Lambda$. In particular, $\Lambda$ is a vector subspace if and only if $\Lambda = \lin \Lambda$.

We first show a nondegeneracy property of maximal cyclically quasi-monotone operators: their values are never proper vector subspaces (\Cref{sec_nondegeneracy}). For normal cone operators, this property characterizes a natural class of quasi-convex functions, which we call essentially non-satiated (\Cref{sec_ens}). Conversely, the normal cone operator of such a function is maximal, even among quasi-monotone operators (\Cref{sec_maximality_normal_cone}). We conclude with the proof of \Cref{main_thm} (\Cref{sec_proof_thm_A}).

\subsection{Nondegeneracy of maximal operators}\label{sec_nondegeneracy}

Let $T$ be a cyclically quasi-monotone operator, and recall that $S^T(x) = \{\cdot \prec_T x\}$ denotes the set of points that are \enquote{strictly smaller} than $x$ according to $T$. If $T(\bar x) = \X^*$ for some $\bar x \in \X$, then $\bar x \prec_T y$ for every $y \neq \bar x$. This happens at most at one point, and we now focus on the points $x$ such that $T(x) \neq \X^*$.

\begin{prp}\label{strict_sublevel_half_subspace}
	Let $T$ be a cyclically quasi-monotone operator whose values are convex cones, and let $x \in \X$ be such that $T(x) \neq \X^*$. Setting $E \coloneqq \lin T(x)$, the set $S^T(x)$ is included in a closed half-space of $x + E^\perp$ whose boundary contains $x$, i.e., there exists $x^* \in \X^* \setminus E$ such that
	\[
		S^T(x) \subseteq \{ y : \pair{x^*}{y - x} \leq 0\} \cap (x+E^\perp).
	\]
\end{prp}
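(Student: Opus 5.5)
The plan is to combine two facts. First, every element of $E$ annihilates $S^T(x) - x$. Second, $S^T(x)$ sits inside a convex set that does not contain $x$, so finite-dimensional separation applies. Throughout, $\X$ is finite-dimensional, as assumed in this section, so it is smoothable and \Cref{thm_chain} applies.

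\emph{Step 1: $S^T(x) \subseteq x + E^\perp$.} By \labelcref{F_included_normal_cone}, $T(x) \subseteq \nop_{C^T(x)}(x)$, and $S^T(x) \subseteq C^T(x)$ by \Cref{transitivity_properties}. Hence $\pair{x^*}{y-x} \leq 0$ for every $x^* \in T(x)$ and every $y \in S^T(x)$. Applying this to both $x^*$ and $-x^*$ for $x^* \in E = T(x) \cap (-T(x))$ gives $\pair{x^*}{y-x} = 0$. Thus $S^T(x) - x \subseteq E^\perp$.

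\emph{Step 2: a convex set avoiding $x$.} By \Cref{thm_chain}, there is a quasi-convex preference relation $\preceq$ with $T \subseteq \nop_\preceq$. By \Cref{normality_strict_part}, $y \prec_T x$ implies $x \not\preceq y$, so $S^T(x) \subseteq D \coloneqq \{y : x \not\preceq y\}$. By totality, $D$ is the strict lower contour set of $x$. Moreover $D = \bigcup_{y \in D} C_\preceq(y)$: if $z \preceq y$ and $x \not\preceq y$, then $x \not\preceq z$ by transitivity. This is a union of a chain of convex sets, hence convex. Finally, $x \notin D$ by reflexivity.

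\emph{Step 3: separation inside $x + E^\perp$.} Set $D' \coloneqq (D - x) \cap E^\perp$. This is a convex subset of the finite-dimensional space $E^\perp$ not containing $0$. Note that $E^\perp \neq \{0\}$, since $E$ is a proper subspace of $\X^*$: otherwise $E = \X^*$ and then $T(x) = \X^*$. Finite-dimensional separation of the point $0 \notin D'$ from the convex set $D'$ yields a nonzero linear form $\ell$ on $E^\perp$ with $\ell \leq 0$ on $D'$. This holds even when $0$ lies in the closure of $D'$, since weak separation in finite dimension only needs $0 \notin \relint D'$. Extend $\ell$ to some $x^* \in \X^*$. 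Since $x^*$ does not vanish on $E^\perp$, while $E^{\perp\perp} = E$, we get $x^* \notin E$. Combining with Steps 1 and 2, $S^T(x) \subseteq \{y : \pair{x^*}{y-x} \leq 0\} \cap (x + E^\perp)$.

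The main point to check is Step 2: that the strict lower contour set is genuinely convex and excludes $x$. The rest is routine bookkeeping between annihilators in finite dimension. In the easy case $T(x) \neq E$, one could shortcut Steps 2 and 3 by picking any $x^* \in T(x) \setminus E$ directly, using Step 1.
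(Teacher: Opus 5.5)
Your proof is correct, but it follows a different route from the paper's. The paper never invokes \Cref{thm_chain}. It first shows $C^T(x) \subseteq x + E^\perp$ by contraposition, using $\pm e^*$ for $e^* \in E$. It then argues by contradiction that $\cc(S^T(x)-x) \neq E^\perp$. If equality held, finite dimension would give a finite $\mathcal P \subseteq S^T(x)$ with $\cc(\mathcal P - x) = E^\perp$. For each $x_0 \in \mathcal P$, the last step $(z_{n-1}, z^*_{n-1})$ of a $T$-ascending path from $x_0$ to $x$ defines a closed half-space that contains $x_0$ (by cyclical quasi-monotonicity) and leaves $x$ strictly outside. This half-space cannot contain all of $\mathcal P$, since $\mathcal P - x$ would then lie in a proper half-space of $E^\perp$; hence $x_0 \prec_T x_1$ for some other $x_1 \in \mathcal P$. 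Iterating inside the finite set $\mathcal P$ produces a $T$-ascending cycle, and polarity in $E^\perp$ then gives $x^*$.

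You instead use the global structure. The preference relation from \Cref{thm_chain} places $S^T(x)$ inside the convex strict lower contour set $\{y : x \not\preceq y\}$, which misses $x$, and plain finite-dimensional separation finishes. This is legitimate: \Cref{thm_chain} is proved in Section~3 without using this proposition, and finite-dimensional spaces are smoothable. It is also shorter and makes the geometric content transparent: $S^T(x)$ lies in a convex set avoiding $x$.

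The price is that your proof rests on the heaviest machinery of the paper: Zorn's lemma for the maximal extension, the Ekeland-based separation argument of \Cref{convexity_stable_by_union}, and the Arrow--Hansson theorem, hence on choice principles. The paper's proof is elementary and self-contained, using only cyclical quasi-monotonicity and the finite generation of a cone that fills a finite-dimensional space. As a consequence, in your version the nondegeneracy of maximal operators (\Cref{values_maximal_operators}) becomes a corollary of the existence theorem rather than an independent, local fact about $T$.
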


\begin{proof}
	Since $T(x) \neq \X^*$, $E = \lin T(x)$ is a proper vector subspace of $\X^*$. First of all, let us show that $C^T(x)$, and thus $S^T(x)$ as well, is included in $x +E^\perp$. We proceed by contraposition, considering $y \not\in x+E^\perp$. There exists $e^* \in E$ such that $\pair{e^*}{y-x} \neq 0$, and since $\pm e^* \in T(x)$ we get $\pair{y^*}{y-x} > 0$ for either $y^* = e^*$ or $y^* = -e^*$, thus $x \prec_T y$. By \Cref{transitivity_properties}, this implies that $y\not\in C^T(x)$.

	To show the desired result, it suffices to prove that the convex cone $\cc(S^T(x)-x)$, which is included in $E^\perp$, is not equal to $E^\perp$. Indeed, in that case, considering the associated polar cones in $E^\perp$, there exists a linear form on $E^\perp$ which is nonpositive on $S^T(x)-x$ and does not vanish identically, and any extension $x^* \in \X^*$ of this form satisfies the desired inclusion, as well as $x^* \notin E$. We proceed by contradiction, assuming that $\cc(S^T(x)-x) = E^\perp$. Thus there exists a finite subset $\mathcal{P}\subseteq S^T(x)$ such that $\cc(\mathcal{P}-x) = E^\perp$.

	Take $x_0 \in \mathcal{P}$. Since $x_0 \prec_T x$, there exists a $T$-ascending path $(z_0, \ldots, z_n)$ from $x_0$ to $x$. In particular, there exists $z_{n-1}^* \in T(z_{n-1})$ such that
	\begin{equation}\label{x_in_half_space}
		\pair{z_{n-1}^*}{x-z_{n-1}} > 0.
	\end{equation}
	Besides, if $n \geq 2$, then $x_0 \prec_T z_{n-1}$, and since $T$ is cyclically quasi-monotone, necessarily
	\begin{equation}\label{x0_in_half_space}
		\pair{z_{n-1}^*}{x_0 - z_{n-1}} \leq 0,
	\end{equation}
	which also holds trivially if $n = 1$, since then $z_{n-1} = x_0$.

	We claim that
	\begin{equation}\label{induction_step}
		\exists x_1 \in \mathcal{P}\setminus\{x_0\} \quad\text{such that}\quad x_0 \prec_T x_1.
	\end{equation}
	By contradiction, assume this is false. In that case we necessarily have
	\begin{equation}\label{x0_complement_in_half_space}
		\mathcal{P}\setminus\{x_0\} \subseteq \{y : \pair{z_{n-1}^*}{y-z_{n-1}} \leq 0\},
	\end{equation}
	because otherwise there would exist a point $y \in \mathcal{P}\setminus\{x_0\}$ such that $\pair{z_{n-1}^*}{y-z_{n-1}} > 0$, thus $z_{n-1} \prec_T y$, hence $x_0 \prec_T y$. From \labelcref{x0_in_half_space} and \labelcref{x0_complement_in_half_space} we get
	\[\mathcal{P} \subseteq \{y : \pair{z_{n-1}^*}{y-z_{n-1}} \leq 0\},\]
	and since $\{y : \pair{z_{n-1}^*}{y-z_{n-1}} \leq 0\} \subseteq \{y : \pair{z_{n-1}^*}{y-x} \leq 0\}$ by \labelcref{x_in_half_space}, we deduce that
	\[\mathcal{P} - x \subseteq \{v \in E^\perp : \pair{z_{n-1}^*}{v} \leq 0\}.\]
	Since $z_{n-1} \in S^T(x) \subseteq x + E^\perp$, we have $x - z_{n-1} \in E^\perp$, so that the linear form $\pair{z_{n-1}^*}{\cdot}$ does not vanish identically on $E^\perp$ by \labelcref{x_in_half_space}. Thus the right-hand side is a closed half-space of $E^\perp$, which contradicts the fact that $\cc(\mathcal{P}-x) = E^\perp$, and we have proved \labelcref{induction_step}.

	Since $x_0 \in \mathcal{P}$ was arbitrary, every point of $\mathcal{P}$ is $\prec_T$-smaller than another point of $\mathcal{P}$. Starting from any point of $\mathcal{P}$, we may thus build a sequence $(x_i)_{i \in \N}$ in $\mathcal{P}$ such that $x_i \prec_T x_{i+1}$ for every $i$. Since $\mathcal{P}$ is finite, we have $x_j = x_i$ for some $i < j$, hence $x_i \prec_T x_{i+1} \prec_T \cdots \prec_T x_j = x_i$ yields a $T$-ascending cycle: a contradiction with $T$ being cyclically quasi-monotone.
\end{proof}

\begin{cor}\label{values_maximal_operators}
	If $T$ is maximal cyclically quasi-monotone, then for every $x \in \X$, $T(x)$ is not a proper vector subspace of $\X^*$.
\end{cor}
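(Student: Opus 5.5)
We argue by contradiction. Assume that $T$ is maximal cyclically quasi-monotone and that, for some $x \in \X$, the value $T(x)$ is a proper vector subspace of $\X^*$. We will produce a strictly larger cyclically quasi-monotone extension of $T$, which contradicts maximality.

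First, \Cref{F_equals_normal_cone} gives $T(y) = (S^T(y)-y)^\circ$ for every $y$. Each value of $T$ is therefore a polar cone, hence a closed convex cone, so the hypotheses of \Cref{strict_sublevel_half_subspace} are satisfied. Since $T(x)$ is a subspace, $E \coloneqq \lin T(x) = T(x) \neq \X^*$.

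\Cref{strict_sublevel_half_subspace} then provides $x^* \in \X^* \setminus E$ such that
\[S^T(x) - x \subseteq \{v : \pair{x^*}{v} \leq 0\}.\]
This inequality says exactly that $x^* \in (S^T(x)-x)^\circ$. By the second part of \Cref{F_equals_normal_cone}, this polar cone equals $T(x) = E$, so $x^* \in E$. This contradicts $x^* \notin E$.

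Equivalently, one can phrase the conclusion as an explicit extension. The operator $\tilde T$ obtained from $T$ by adding $x^*$ to $T(x)$ is cyclically quasi-monotone, by the argument in the proof of \Cref{F_equals_normal_cone}. It strictly extends $T$, which again contradicts maximality.

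The one point to check is that \Cref{strict_sublevel_half_subspace} applies, which requires convex cone values. This is supplied by the identity $T(y) = (S^T(y)-y)^\circ$, valid for maximal $T$. Once that is in place, the corollary is a direct combination of the two results.
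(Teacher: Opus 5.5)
Your proposal is correct and follows the paper's argument: use the identity $T(x) = (S^T(x)-x)^\circ$ for maximal $T$ to get convex-cone values, apply the half-space proposition to obtain $x^* \notin \lin T(x)$ lying in $(S^T(x)-x)^\circ = T(x)$, and conclude. The paper phrases this directly as $x^* \in T(x) \setminus \lin T(x)$, whereas you phrase it as a contradiction with $T(x) = \lin T(x)$; your closing remark about the explicit extension $\tilde T$ is correct but not needed.
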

\begin{proof}
	Recall that the values of $T$ are closed convex cones by \Cref{F_equals_normal_cone}. Assume that $T(x) \neq \X^*$. By \Cref{strict_sublevel_half_subspace}, there exists $x^* \in \X^* \setminus \lin T(x)$ such that $\pair{x^*}{y - x} \leq 0$ for every $y \in S^T(x)$, i.e., $x^* \in (S^T(x)-x)^\circ$. Since $T$ is maximal, $(S^T(x)-x)^\circ = T(x)$ by \Cref{F_equals_normal_cone}, hence $x^* \in T(x) \setminus \lin T(x)$, and $T(x)$ is not a vector subspace.
\end{proof}

\subsection{Essentially non-satiated functions}\label{sec_ens}

By \Cref{values_maximal_operators}, if $\nop f$ is maximal cyclically quasi-monotone for some quasi-convex function $f$, then $\nop f(x)$ is never a proper vector subspace of $\X^*$. Since $\nop f(x) = \X^*$ if and only if $x$ is a (hence the unique) strict global minimizer of $f$, this means that $\nop f(x)$ is not a vector subspace at any other point $x$. We will show that this characterizes the following class of quasi-convex functions.

\begin{dfn}[Essential non-satiation]\label{dfn_essential_non_satiation}
	A quasi-convex function $f \colon \X \to \R \cup \{+\infty\}$ is said to be \emph{essentially non-satiated} if every local minimizer of $f$ in $\dom f$ is a strict global minimizer. In particular, if $f$ has a minimizer in $\dom f$, then it is unique.
\end{dfn}

\begin{rmk}[Terminology and related notions]\label{rmk_non_satiation}
	The terminology comes from economics, where a preference relation is \emph{locally non-satiated} if every neighborhood of every point contains a strictly preferred point \cite[Definition~3.B.3]{mascolellMicroeconomicTheory1995}. For a utility function, this means that it has no local maximizer, and for $f$, that it has no local minimizer. Essential non-satiation allows one exception, namely a unique global minimizer, which corresponds to a \emph{bliss point} in economics. In optimization, quasi-convex functions whose local minimizers in $\dom f$ are all global minimizers are called \emph{neatly quasi-convex} \cite[Proposition~4.1]{alhomidanTransformationQuasiconvexFunctions2018}. In \cite[Definition~4.1]{alhomidanTransformationQuasiconvexFunctions2018}, they are defined equivalently by the fact that $\{f < f(x)\}$ and $\{f \leq f(x)\}$ have the same closure, or the same relative interior, for every $x$ such that $f(x) > \inf f$. Thus $f$ is essentially non-satiated if and only if it is neatly quasi-convex and has at most one minimizer. Neat quasi-convexity is weaker than the classical \emph{semistrict quasi-convexity} \cite{karamardianStrictlyQuasiconvexConcave1967,avrielGeneralizedConcavity1988}, which requires $f(z) < f(y)$ for every $x, y \in \dom f$ such that $f(x) < f(y)$ and every $z \in \mathopen]x,y\mathclose[$. Both notions coincide for continuous functions \cite[Theorem~3.37]{avrielGeneralizedConcavity1988}, \cite[Proposition~4.2]{alhomidanTransformationQuasiconvexFunctions2018}, but not for lower semicontinuous ones \cite[Example~4.1]{alhomidanTransformationQuasiconvexFunctions2018}.
\end{rmk}

We denote by $\relint C$ the relative interior of a convex set $C \subseteq \X$, i.e., its interior relative to its affine hull. We shall use the following classical properties.

\begin{prp}[{\cite[Theorems~6.2, 6.3 and 6.4]{rockafellarConvexAnalysis1970}}]\label{relative_interior}
	Let $C$ be a nonempty convex subset of $\X$. The following statements hold.
	\begin{enumerate}[(a)]
		\item\label{item_ri_dense} $\relint C$ is nonempty and dense in $\cl C$.
		\item\label{item_ri_cone} For every $x \in C$, we have $x \in \relint C$ if and only if $\cc (C-x)$ is a vector subspace of $\X$, if and only if $\clcc (C-x)$ is a vector subspace of $\X$.
	\end{enumerate}
\end{prp}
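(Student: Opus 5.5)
These are the classical facts of Rockafellar's Theorems~6.2--6.4, and the plan is to reprove them in the finite-dimensional space $\X \cong \R^d$. Both items are translation invariant, so we may place points at the origin when convenient. Let $\operatorname{aff} C$ denote the affine hull of $C$, and let $\operatorname{int}$ denote the interior relative to $\operatorname{aff} C$; we identify $\operatorname{aff} C$ with $\R^m$.

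For \labelcref{item_ri_dense}, I would first take a maximal affinely independent family $a_0,\dots,a_m$ in $C$. These points affinely span $\operatorname{aff} C$, so the simplex $\operatorname{conv}\{a_i\}\subseteq C$ has nonempty interior in $\operatorname{aff} C$, witnessed by its barycenter. Hence $\relint C \neq \emptyset$.

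For density, the key step is the line-segment principle: if $a \in \relint C$ and $b \in \cl C$, then $[a,b\mathclose[ \subseteq \relint C$. To prove it, pick a ball $B(a,\rho)\cap\operatorname{aff} C \subseteq C$ and $\lambda\in\mathopen]0,1]$. The point $(1-\lambda)a+\lambda b$ has a neighborhood of the form $(1-\lambda)B(a,\rho')+\lambda b_n$ inside $C$, where $b_n\in C$ is close to $b$ and the error $\lambda(b-b_n)$ is absorbed into the ball around $a$. Letting $\lambda\to1$ along this segment then gives $b \in \cl \relint C$.

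For \labelcref{item_ri_cone}, assume first that $x\in\relint C$. Then $C-x$ contains a relative ball of $L \coloneqq \operatorname{aff}C - x$, so $\cc(C-x)=L$, which is a subspace; its closure is the same subspace. The middle condition trivially implies the last one, since the closure of a subspace is a subspace. It therefore remains to show that if $\clcc(C-x)$ is a subspace, then $x \in \relint C$.

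That last implication is the main obstacle, and I would argue it as follows. First, $\clcc(C-x)$ contains $C-x$, hence also its span $L$, and it is contained in the closed space $L$; so $\clcc(C-x)=L$. Next, $\cc(C-x)$ is a convex cone whose closure is $L$. A convex set in $L$ with the same closure as $L$ has the same relative interior as $L$, by \labelcref{item_ri_dense} together with the standard fact $\relint \cl K=\relint K$, which itself follows from the line-segment principle. Thus $\cc(C-x)=L$. In particular, for a basis $e_1,\dots,e_m$ of $L$, each vector $\pm e_j$ is a nonnegative combination of finitely many points of $C-x$. Collecting these finitely many points $c_k-x$ and taking small positive multiples, convexity and $0\in C-x$ give $t(\pm e_j)\in C-x$ for some $t>0$ and all $j$. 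Hence $C-x$ contains the polytope $\operatorname{conv}\{\pm t e_j\}$, which is a neighborhood of $0$ in $L$, and so $x\in\relint C$.
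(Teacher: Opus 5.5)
Your proof is correct. The paper gives no proof of its own and only cites Rockafellar. Your part (a) is the textbook argument behind Theorems~6.1--6.3: a simplex on a maximal affinely independent family, then the line-segment principle. Your passage from $\clcc(C-x)=L$ to $\cc(C-x)=L$ via $\relint\cl K=\relint K$ is again Theorem~6.3.

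The one step where your route genuinely differs is the implication $\cc(C-x)=L \Rightarrow x\in\relint C$. The cited Theorem~6.4 handles it with the line-segment principle. Pick $a\in\relint C$. Then $x-a\in\cc(C-x)$, so $x+\eps(x-a)\in C$ for some $\eps>0$. Hence $x$ lies in the open segment between $a\in\relint C$ and a point of $C$, and so $x\in\relint C$.

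Your cross-polytope construction instead exhibits an explicit relative neighbourhood of $x$ in $C$, using only finitely many points of $C$ and convexity. This makes that implication independent of (a) and of the line-segment principle, which you then need only for the closure step. The price is a bit more bookkeeping than the two-line Theorem~6.4 argument.

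One small slip: in your line-segment principle the parameter must satisfy $0\leq\lambda<1$, not $0<\lambda\leq 1$ as written. For $\lambda=1$ the set $(1-\lambda)B(a,\rho')+\lambda b_n$ collapses to a point and is no neighbourhood. Your subsequent limit $\lambda\to 1$ shows this is what you meant.
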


\begin{lem}\label{equivalence_normal_cone_non_linear}
	Let $f \colon \X \to \R \cup \{+\infty\}$ be a quasi-convex function and $x \in \dom f$. Then $\nop f(x)$ is a vector subspace of $\X^*$ if and only if $x \in \relint \{f \leq f(x)\}$.
\end{lem}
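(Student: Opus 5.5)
The plan is to reduce the statement to \Cref{relative_interior}~\labelcref{item_ri_cone} by polar duality in finite dimension. Set $C \coloneqq \{f \leq f(x)\}$, which is convex since $f$ is quasi-convex, and contains $x$ since $x \in \dom f$. By definition \labelcref{normal_cone_operator_function}, $\nop f(x) = (C - x)^\circ$. Since the polar of a set coincides with the polar of its closed conic hull, we get $\nop f(x) = (\clcc(C-x))^\circ$. Write $K \coloneqq \clcc(C-x)$; it is a nonempty closed convex cone in $\X$.

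The key steps, in order, are as follows. First, in finite dimension the bipolar theorem gives $K^{\circ\circ} = K$, so $K$ and $\nop f(x) = K^\circ$ determine each other. Second, I claim that a closed convex cone $K$ is a vector subspace if and only if $K^\circ$ is a vector subspace. If $K$ is a subspace, then $K^\circ = K^\perp$, which is a subspace. Conversely, if $K^\circ$ is a subspace $L$, then $K = K^{\circ\circ} = L^\circ = L^\perp$ is a subspace, because a linear form that is nonpositive on a subspace vanishes on it. Third, I combine these with \Cref{relative_interior}~\labelcref{item_ri_cone}: $x \in \relint C$ if and only if $\clcc(C-x)$ is a vector subspace. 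This holds if and only if $\nop f(x)$ is a vector subspace.

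The only delicate point is the use of the closed conic hull rather than the cone itself. This is exactly why \Cref{relative_interior}~\labelcref{item_ri_cone} is phrased with $\clcc$, and it makes the bipolar step clean. No further obstacle is expected, since finite dimensionality, assumed throughout this section, gives the bipolar identity directly.
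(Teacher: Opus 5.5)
Your proposal is correct and follows the paper's proof essentially verbatim: both write $\nop f(x)$ as the polar of $\clcc(\{f \leq f(x)\} - x)$, use the bipolar identity to see that this closed convex cone is a subspace exactly when its polar is, and then conclude by \Cref{relative_interior}~\labelcref{item_ri_cone}.
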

\begin{proof}
	Setting $K \coloneqq \{f \leq f(x)\}-x$ and $C \coloneqq \clcc K$, we have $\nop f(x) = K^\circ = C^\circ$. Since $C$ is a closed convex cone, we have $C = C^{\circ\circ}$, and since the polar cone of a vector subspace is its orthogonal, $C$ is a vector subspace if and only if $C^\circ$ is. We conclude by \Cref{relative_interior} \labelcref{item_ri_cone}.
\end{proof}

\begin{prp}\label{characterization_essential_non-satiation}
	A quasi-convex function $f$ is essentially non-satiated if and only if for every $x \in \dom f$, $\nop f(x)$ is not a proper vector subspace of $\X^*$.
\end{prp}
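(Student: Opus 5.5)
The plan is to translate everything through \Cref{equivalence_normal_cone_non_linear}: for $x \in \dom f$, set $L_x \coloneqq \{f \leq f(x)\}$, which is convex. Then $\nop f(x)$ is a vector subspace if and only if $x \in \relint L_x$. Moreover, $\nop f(x) = \X^*$ if and only if $L_x = \{x\}$, since then $L_x - x = \{0\}$; conversely, if $L_x$ contains some $y \neq x$, we can pick $x^*$ with $\pair{x^*}{y-x} > 0$, so $\nop f(x) \neq \X^*$. Thus the condition in the statement reads: for every $x \in \dom f$, either $L_x = \{x\}$ or $x \notin \relint L_x$. Note that $L_x = \{x\}$ means exactly that $x$ is a strict global minimizer. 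I will prove both implications by exhibiting, in each case, a point whose sublevel set has it in its relative interior.

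($\Leftarrow$) I argue by contraposition. Let $x \in \dom f$ be a local minimizer that is not a strict global minimizer, and let $U$ be a neighborhood of $x$ with $f \geq f(x)$ on $U$. Since $x$ is not a strict global minimizer, $L_x$ contains a point $y \neq x$, either with $f(y) < f(x)$ or with $f(y) = f(x)$. By \Cref{relative_interior} \labelcref{item_ri_dense}, $\relint L_x$ is dense in $L_x$, so there is $w \in \relint L_x \cap U$. Then $f(w) \leq f(x)$ because $w \in L_x$, and $f(w) \geq f(x)$ because $w \in U$, so $f(w) = f(x)$ and $L_w = L_x$. Hence $w \in \relint L_w$, and $\nop f(w)$ is a vector subspace. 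It is proper, since $L_w$ contains at least two points among $x, y, w$. If $w = y$, use $x$ instead of $y$, which is possible since $x \neq y$.

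($\Rightarrow$) Assume $f$ is essentially non-satiated, and suppose by contradiction that $x \in \dom f$ satisfies $x \in \relint L_x$ and $L_x \neq \{x\}$. Let $S \coloneqq \{f < f(x)\}$, which is convex and contained in $L_x$. I distinguish two cases.

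\emph{Case $\cl S = \cl L_x$.} Convex sets with the same closure have the same relative interior, so $x \in \relint L_x = \relint S \subseteq S$. This gives $f(x) < f(x)$, which is absurd.

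\emph{Case $\cl S \neq \cl L_x$.} First, $\relint L_x \not\subseteq \cl S$: otherwise density would give $\cl L_x \subseteq \cl S$, and hence equality of the closures. So pick $w \in \relint L_x \setminus \cl S$. Then $f(w) = f(x)$, since $w \in L_x$ and $w \notin S$. Take a neighborhood $U$ of $w$ disjoint from $\cl S$. On $U$, points outside $L_x$ have $f > f(x)$, and points of $L_x$ have $f = f(x)$ since they lie outside $S$. Hence $f \geq f(w)$ on $U$, and $w$ is a local minimizer. Since $w \in \relint L_x$ and $L_x$ is not a singleton, every neighborhood of $w$ contains points $w' \neq w$ of $L_x \cap U$ with $f(w') = f(w)$. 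So $w$ is not a strict global minimizer, contradicting essential non-satiation.

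The main obstacle is this second case of ($\Rightarrow$). A point in the relative interior of its sublevel set need not itself be a local minimizer, because values strictly below $f(x)$ may accumulate at $x$. The fix is to move to a point $w$ of $\relint L_x$ lying away from $\cl S$, and the identity $\relint S = \relint L_x$ when the closures agree is what rules out the remaining case.
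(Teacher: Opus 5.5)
Your proof is correct. The direction ($\Leftarrow$) is essentially the paper's argument: pick a point of $\relint\{f\le f(x)\}$ near the local minimizer, observe that it has the same sublevel set, and apply \Cref{equivalence_normal_cone_non_linear}. Your case split there is unnecessary, since $L_w=L_x$ already contains the two distinct points $x$ and $y$.

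For ($\Rightarrow$) you take a different route. The paper first notes that global minimizers are strict, so that $\nop f(x)=\X^*$ at such points. Then, for a non-minimizer $x\in\relint\{f\le f(x)\}$, it separates $x$ from $\{f<f(x)\}$ by a linear form inside the affine span of $\{f\le f(x)\}$. The resulting open half-space $H_+$ is a region where $f\ge f(x)$, and it meets $\{f\le f(x)\}$ because $x$ is a relative interior point. This yields a non-global local minimizer directly.

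You avoid separation and split on whether $\cl\{f<f(x)\}=\cl\{f\le f(x)\}$:
\begin{itemize}
\item If the closures agree, you use that convex sets with the same closure have the same relative interior \cite[Corollary~6.3.1]{rockafellarConvexAnalysis1970}, which forces $x\in\{f<f(x)\}$. This fact is not among the items of \Cref{relative_interior}, but it follows from the same Theorem~6.3.
\item If they differ, a relative interior point lying outside $\cl\{f<f(x)\}$ is a non-strict local minimizer.
\end{itemize}

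What each route buys:
\begin{itemize}
\item Your version handles non-strict global minimizers uniformly: they fall into the second case, with an empty strict sublevel set.
\item It also makes explicit the link with neat quasi-convexity from \Cref{rmk_non_satiation}. Essential non-satiation forces the strict and large sublevel sets to have the same closure, hence the same relative interior. The paper only extracts that fact afterwards, in the proof of \Cref{lem_equality_cones}.
\item The paper's argument is a single direct construction that needs only weak separation in finite dimension.
\end{itemize}
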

\begin{proof}
	Assume that $f$ is essentially non-satiated. In particular, $f$ admits at most one minimizer, thus
	\[x \text{ is a global minimizer } \iff x \text{ is a strict global minimizer } \iff \nop f(x) = \X^*.\]
	It suffices to show that for any point $x$ which is not a global minimizer, $\nop f(x)$ is not a vector subspace of $\X^*$. By \Cref{equivalence_normal_cone_non_linear}, this is equivalent to $x \notin \relint \{f \leq f(x)\}$. Assume by contradiction that for some $x$ which is not a minimizer of $f$, we have $x \in \relint \{f \leq f(x)\}$. This does not imply that $x$ is a local minimizer of $f$, but we show that there exists a point $x'$ such that $f(x') = f(x)$ and $x'$ is a local minimizer of $f$. Since in finite dimension we may always separate two disjoint convex sets by a linear form (in the weak sense), we may separate $x$ and $\{f < f(x)\}$ linearly in the affine span $A$ of $\{f \leq f(x)\}$, and extending the corresponding linear form to an element of $\X^*$ we obtain a linear form $x^* \in \X^*$ such that
	\[\sup_{y\in \{f < f(x)\}} \pair{x^*}{y-x} \leq 0\]
	and $x^*$ is not identically $0$ on $\{f \leq f(x)\}-x$. Since $x \in \relint \{f \leq f(x)\}$, we see that $H_+ \coloneqq \{y : \pair{x^*}{y-x} > 0\}$ has a nonempty intersection with $\{f \leq f(x)\}$ (just take $y = x+tu$ for $t > 0$ small and $\pair{x^*}{u} > 0$ for some well-chosen $u$ in the linear span of $\{f \leq f(x)\}-x$). By definition of $x^*$, we have $f \geq f(x)$ on the open set $H_+$. Consequently, any point $x' \in H_+ \cap \{f \leq f(x)\}$ satisfies $f(x') = f(x)$, and it is a local minimizer of $f$, since $f \geq f(x')$ on its open neighborhood $H_+$. Since $f(x') = f(x)$ and $x$ was assumed not to be a global minimizer, then $x'$ is not a global minimizer either, contradicting the fact that $f$ is essentially non-satiated.

	Conversely, assume that $\nop f(x)$ is never a proper vector subspace of $\X^*$, and let $x \in \dom f$ be a local minimizer of $f$. Setting $C \coloneqq \{f \leq f(x)\}$, there exists $r > 0$ such that $f = f(x)$ on $C \cap B(x,r)$. By \Cref{relative_interior} \labelcref{item_ri_dense}, there exists $y \in \relint C \cap B(x,r)$. Then $f(y) = f(x)$, thus $\{f \leq f(y)\} = C$ and $y \in \relint \{f \leq f(y)\}$, so that $\nop f(y)$ is a vector subspace by \Cref{equivalence_normal_cone_non_linear}. By assumption, $\nop f(y) = \X^*$, i.e., $y$ is a strict global minimizer of $f$, so that $x = y$ is also a strict global minimizer of $f$, and $f$ is essentially non-satiated.
\end{proof}

\subsection{Maximality of normal cone operators}\label{sec_maximality_normal_cone}

Given an operator $T$ and $x \in \X$, we denote by
\[S^T_1(x) \coloneqq \{y \in \X : \pair{y^*}{x-y} > 0 \text{ for some } y^* \in T(y)\} \subseteq S^T(x)\]
the set of points from which $x$ can be reached by a $T$-ascending path of length one.

\begin{lem}\label{lem_equality_cones}
	Let $f$ be a lower semicontinuous quasi-convex function which is essentially non-satiated. If $x \notin \argmin f$ then
	\[\clcc (S^{\nop f}_1(x)-x) = \clcc (S^{\nop f}(x)-x) = \clcc (\{f < f(x)\}-x) = \clcc (\{f \leq f(x)\}-x).\]
\end{lem}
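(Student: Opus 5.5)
Write $T \coloneqq \nop f$, $L \coloneqq \{f \leq f(x)\}$ and $L_< \coloneqq \{f < f(x)\}$, and set $K_1 \coloneqq \clcc(S^T_1(x)-x)$, $K \coloneqq \clcc(S^T(x)-x)$, $K_< \coloneqq \clcc(L_<-x)$ and $K_\leq \coloneqq \clcc(L-x)$. The plan is to prove the chain of inclusions
\[K_1 \subseteq K \subseteq K_< \subseteq K_\leq \subseteq K_1,\]
from which all four equalities follow at once. The first inclusion is immediate from $S^T_1(x) \subseteq S^T(x)$. The third is immediate from $L_< \subseteq L$.

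For the second inclusion, I will show $S^T(x) \subseteq L_<$. Since $T$ is the normal cone operator of the chain of sublevel sets of $f$, \Cref{normality_strict_part} applies with the preference relation represented by $f$. It gives that $y \prec_T x$ implies $x \not\preceq y$, that is, $f(y) < f(x)$.

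The last inclusion $K_\leq \subseteq K_1$ is the real content, and I expect it to be the main obstacle. By density and closedness, it suffices to show that every $y \in L_<$ lies in $K_1 + x$, and then to pass to $L$. The passage to $L$ should come from essential non-satiation. Since $x$ is not a minimizer, the equality $\cl L_< = \cl L$ should hold, as recalled in \Cref{rmk_non_satiation}. I would reprove it as follows. Suppose some $z \in \relint L$ is not in $\cl L_<$. Then $f \equiv f(x)$ on a relative neighborhood of $z$ in $L$, and $f > f(x)$ off $L$, so $z$ is a local minimizer. By essential non-satiation $z$ is then the strict global minimizer, which contradicts $f(z) = f(x) > \inf f$ (here $L_<$ is nonempty because $x \notin \argmin f$). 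Hence $\relint L \subseteq \cl L_<$, and so $\cl L = \cl L_<$ by \Cref{relative_interior}.

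It remains to show that points $y$ with $f(y) < f(x)$ give directions in $K_1$. The naive attempt is to find $y^* \in \nop f(y)$ with $\pair{y^*}{x-y} > 0$. If $y$ is not a minimizer, $\nop f(y)$ is not a vector subspace by \Cref{characterization_essential_non-satiation}, but its elements need not be positive on $x-y$. So I would argue at the level of cones instead. Take $z$ in $\relint L_<$, or more generally any $y$ with $f(y) < f(x)$ that is not the minimizer. Then $x \notin \{f \leq f(y)\}$, since $f(x) > f(y)$. Let $C \coloneqq \{f \leq f(y)\}$, which is closed and convex by lower semicontinuity. For each point $w \in \partial C$ (relative boundary, or full boundary), the normal cone $\nop_C(w)$ contains $\nop f(w)$ whenever $f(w) = f(y)$. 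The key sub-claim is then: for every $y$ with $f(y) < f(x)$, the segment direction $y-x$ belongs to $K_1$. To prove it, consider the points $w$ of $C$ that are nearest to $x$, or more generally the boundary points $w$ of $C$ exposed by a separating functional $w^*$ with $\pair{w^*}{x-w} > 0$. Such a $w$ satisfies $w^* \in \nop_C(w)$. If $f(w) = f(y)$, then $w^* \in \nop f(w)$, so $w \in S^T_1(x)$. Letting the level $f(y)$ vary over values in $(\inf f, f(x))$ and using all exposing functionals, the union of these exposed points $w$ has closed conic hull, from $x$, containing $C-x$. This is where I would use a Straszewicz/Krein–Milman type argument in finite dimension, together with the approximation $\cl L_< = \cl L$ and non-satiation to handle the case $f(w) < f(y)$: one replaces $C$ by the sublevel set at level $f(w)$ and iterates, or picks levels where this does not occur. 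I expect precisely this step, ensuring that exposed boundary points carry normals from $\nop f$ and generate the full cone, to be the delicate part of the proof.
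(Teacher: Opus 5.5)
You have the easy parts right. The chain $K_1\subseteq K\subseteq K_<\subseteq K_\leq$ is fine, and so is $S^{\nop f}(x)\subseteq\{f<f(x)\}$ via \Cref{normality_strict_part}. Your proof that $\{f\le f(x)\}\subseteq\cl\{f<f(x)\}$ from essential non-satiation is also correct; it works for any point of $\{f\le f(x)\}\setminus\cl\{f<f(x)\}$, not just relative interior points.

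The gap is the only substantive inclusion, $\{f<f(x)\}-x\subseteq\clcc(S^{\nop f}_1(x)-x)$. You assert it and do not prove it, and the route you sketch would not get you there. Krein--Milman or Straszewicz is the wrong tool here. Sublevel sets may be unbounded with no extreme points at all, for instance half-spaces. Exposed points of $C$ on the side facing away from $x$ have no normal positive on $x-w$, so they are not in $S^{\nop f}_1(x)$ anyway. The case $f(w)<f(y)$, which you plan to handle with non-satiation and an iteration over levels, is not an obstacle at all: you have the inclusion backwards. Since $\{f\le f(w)\}\subseteq C$ for every $w\in C=\{f\le f(y)\}$, and normal cones reverse inclusion, you always have $\nop_C(w)\subseteq\nop f(w)$.

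The missing idea is to follow the segment starting at $x$. Fix $s<f(x)$ with $C\coloneqq\{f\le s\}\neq\emptyset$ and $y\in\relint C$. Let $w$ be the first point of the closed set $C$ on $[x,y]$; note $w\neq x$.
\begin{enumerate}[(a)]
\item If $x\in\operatorname{aff}C$, then $w$ is not in $\relint C$, otherwise points of $[x,w)$ near $w$ would lie in $C$. In particular $w\neq y$. Take a supporting functional $w^*\in\nop_C(w)$ that is not constant on $C$. It is negative on $y-w$, because $y\in\relint C$. It is therefore positive on $x-w$, which is a negative multiple of $y-w$.
\item If $x\notin\operatorname{aff}C$, then the line through $x$ and $y$ meets $\operatorname{aff}C$ only at $y$, so $w=y$. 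Any $w^*$ vanishing on the direction of $\operatorname{aff}C$ with $\pair{w^*}{x-y}>0$ belongs to $\nop_C(y)$.
\end{enumerate}
In both cases $w^*\in\nop_C(w)\subseteq\nop f(w)$ and $\pair{w^*}{x-w}>0$. Hence $w\in S^{\nop f}_1(x)$, and $y-x$ is a positive multiple of $w-x$. This gives $\relint C-x\subseteq\cc(S^{\nop f}_1(x)-x)$. Using the density of $\relint C$ in $C$ and taking the union over $s<f(x)$, you get $K_<\subseteq K_1$. Your non-satiation step then yields $K_\leq=K_<$, which is the only place where essential non-satiation is needed.

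This route differs from the paper's. The paper shows directly, for $y\in\relint\{f\le f(x)\}$, that $x+t(y-x)\in S^{\nop f}_1(x)$ for small $t$. It argues by contradiction and compactness, using essential non-satiation to produce normals at $x+t_n(y-x)$ outside $V^\perp$.
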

\begin{proof}
	Notice that $S^{\nop f}_1(x) \subseteq S^{\nop f}(x) \subseteq \{f < f(x)\} \subseteq \{f \leq f(x)\}$. Besides, since $\{f \leq f(x)\}$ is nonempty, closed and convex, \Cref{relative_interior} \labelcref{item_ri_dense} yields
	\[\cl \relint \{f \leq f(x)\} = \{f \leq f(x)\}.\]
	To get the equalities of all considered cones, it thus suffices to show that
	\[\relint \{f \leq f(x)\}-x \subseteq \clcc (S^{\nop f}_1(x)-x).\]

	The essential non-satiation of $f$ guarantees that
	\[\relint \{f \leq f(x)\} \subseteq \{f < f(x)\}.\]
	Indeed, by contraposition, if $y$ is a point such that $f(y) = f(x)$, then by \Cref{characterization_essential_non-satiation} and the fact that $x$ is not a minimizer of $f$ (hence neither is $y$), $\nop f(y)$ is not a vector subspace of $\X^*$, and by \Cref{equivalence_normal_cone_non_linear} it implies that $y \notin \relint \{f \leq f(x)\}$. In other words, $\{f < f(x)\}$ and $\{f \leq f(x)\}$ have the same relative interior (see \cite[Proposition~4.1]{alhomidanTransformationQuasiconvexFunctions2018} and \Cref{rmk_non_satiation}).

	Let us show that for every $y \in \relint \{f \leq f(x)\}$, for $t > 0$ small enough we have $x+t(y-x) \in S^{\nop f}_1(x)$. By contradiction, assume that $x_n \coloneqq x+t_n(y-x) \notin S^{\nop f}_1(x)$ for some sequence $t_n \downarrow 0$. Since $f$ is lower semicontinuous and $x_n \to x$, we get $f(x) \leq \liminf_n f(x_n)$. But $x_n$ lies on the segment between $x \in \{f \leq f(x)\}$ and $y \in \relint \{f \leq f(x)\}$, so that $x_n \in \relint \{f \leq f(x)\} \subseteq \{f < f(x)\}$ for every $n$, and thus $f(x_n) \to f(x)$. Therefore
	\begin{equation}\label{union_strict_sublevels}
		\{f < f(x)\} = \bigcup_{n\in \N} \{f \leq f(x_n)\}.
	\end{equation}
	By \labelcref{union_strict_sublevels} and the fact that $\{f < f(x)\}$ is dense in $\{f \leq f(x)\}$, the affine span of $\{f \leq f(x_n)\}$ coincides with that of $\{f \leq f(x)\}$ for $n$ large enough, which we assume from now on. Denoting by $V$ the direction of this affine span, we get $\lin \nop f(x_n) = (\{f \leq f(x_n)\}-x_n)^\perp = V^\perp$.

	For every $n$, since $x_n$ is not a minimizer of $f$, the cone $\nop f(x_n)$ is not a vector subspace by \Cref{characterization_essential_non-satiation}, hence it contains some $y_n^* \notin V^\perp$. Let $p_n$ be a closest point to $y_n^*$ in $V^\perp$. Since $-p_n \in V^\perp = \lin \nop f(x_n)$, the linear form $y_n^* - p_n$ still belongs to $\nop f(x_n)$, and $\operatorname{dist}(y_n^* - p_n, V^\perp) = \operatorname{dist}(y_n^*, V^\perp) = \norm{y_n^* - p_n} > 0$. Dividing by this quantity, we obtain $x_n^* \in \nop f(x_n)$ such that $\norm{x_n^*} = \operatorname{dist}(x_n^*, V^\perp) = 1$. From $x_n \notin S^{\nop f}_1(x)$ we obtain $\pair{x_n^*}{x-x_n} \leq 0$, or equivalently
	\[\pair{x_n^*}{y-x} \geq 0.\]
	Besides, since $f(y) < f(x) = \lim_n f(x_n)$ we know that for $n$ large enough $y \in \{f \leq f(x_n)\}$, thus $\pair{x_n^*}{y-x_n} \leq 0$, or equivalently
	\[ \pair{x_n^*}{y-x} \leq 0.\]
	As a consequence $\pair{x_n^*}{y-x} = 0$ for every $n$ large enough. By relative compactness of bounded subsets in finite dimension, we can extract a subsequence (relabeled) $(x_n^*)$ which converges to some $x^*$ such that $\operatorname{dist}(x^*, V^\perp) = 1$, and which still satisfies
	\[\pair{x^*}{y-x} = 0.\]
	Besides, $x_n^*$ being in $\nop f(x_n)$ we have $\pair{x_n^*}{z-x_n} \leq 0$ for every $z$ in $\{f \leq f(x_n)\}$ thus for every $z \in \{f < f(x)\}$ for $n$ large enough, and by passing to the limit we get
	\[\forall z \in \{f < f(x)\}, \quad \pair{x^*}{z-x} \leq 0.\]
	By density of $\relint \{f \leq f(x)\}$, thus of $\{f < f(x)\}$, in $\{f \leq f(x)\}$, it means that $x^* \in \nop f(x)$. We therefore get a contradiction: since $\pair{x^*}{y-x} = 0$, $x^*$ attains its maximum on $\{f \leq f(x)\}$ at $x$ but also at $y$, and $y$ is a relative interior point of $\{f \leq f(x)\}$, thus $x^*$ is constant on that set, which means that $x^* \in V^\perp$, contradicting $\operatorname{dist}(x^*, V^\perp) = 1$.
\end{proof}

\begin{thm}[Characterization of maximal operators]\label{thm_characterization_maximal}
	Let $f \colon \X \to \R \cup \{+\infty\}$ be a proper lower semicontinuous quasi-convex function. The following assertions are equivalent.
	\begin{enumerate}[(i)]
		\item $\nop f$ is maximal quasi-monotone.
		\item $\nop f$ is maximal cyclically quasi-monotone.
		\item $f$ is real-valued and essentially non-satiated.
	\end{enumerate}

	Moreover, if $T$ is a cyclically quasi-monotone operator, the following assertions are equivalent.
	\begin{enumerate}[(i')]
		\item $T$ is maximal quasi-monotone.
		\item $T$ is maximal cyclically quasi-monotone.
		\item $T = \nop f$ for some $f \in \cE_0(\X)$.
	\end{enumerate}
\end{thm}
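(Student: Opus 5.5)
We prove the first block of equivalences along the cycle (i) $\Rightarrow$ (ii) $\Rightarrow$ (iii) $\Rightarrow$ (i), and then derive the second block from it together with \Cref{thm_chain}.

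\emph{(i) $\Rightarrow$ (ii).} The operator $\nop f$ is cyclically quasi-monotone by \Cref{convex_chain_operator_CQM}. Any cyclically quasi-monotone extension of it is in particular quasi-monotone, so by (i) it equals $\nop f$.

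\emph{(ii) $\Rightarrow$ (iii).} By \Cref{values_maximal_operators}, no value $\nop f(x)$ is a proper vector subspace. By \Cref{characterization_essential_non-satiation}, $f$ is therefore essentially non-satiated. It remains to show that $f$ is real-valued. Suppose $\dom f \neq \X$ and pick $y \notin \dom f$, so that $\nop f(y) = \emptyset$. Since $\dom f$ is convex, we can attach a nonzero vector at $y$ and argue by maximality. Concretely, let $\tilde T$ be $\nop f$ with one extra pair $(y, y^*)$, where $y^*$ is chosen so that $S^{\nop f}(y)$ lies in $\{z : \pair{y^*}{z-y}\le 0\}$. Adding the pair $(y,0)$ is simpler: $0$ creates no ascending step, so $\tilde T$ stays cyclically quasi-monotone, which contradicts maximality. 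Hence $f$ is real-valued.

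\emph{(iii) $\Rightarrow$ (i).} Let $T \supseteq \nop f$ be quasi-monotone, and take $x^* \in T(x)$. If $x \in \argmin f$, then $\nop f(x) = \X^*$ already contains $x^*$. Otherwise, take $y \in S^{\nop f}_1(x)$, so that $\pair{y^*}{x-y} > 0$ for some $y^* \in \nop f(y) \subseteq T(y)$. Quasi-monotonicity of $T$ applied to the pair $(x,x^*),(y,y^*)$ gives $\pair{x^*}{y-x} \le 0$. Hence $x^*$ is nonpositive on $S^{\nop f}_1(x)-x$, and therefore on its closed conic hull. By \Cref{lem_equality_cones} this hull is $\clcc(\{f\le f(x)\}-x)$, so $x^* \in \nop f(x)$. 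This gives $T = \nop f$. The real work here is \Cref{lem_equality_cones}, which is already established.

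\emph{Second block.} The implication (i') $\Rightarrow$ (ii') holds exactly as (i) $\Rightarrow$ (ii), since $T$ is assumed cyclically quasi-monotone. The implication (iii') $\Rightarrow$ (i') follows from (iii) $\Rightarrow$ (i). For (ii') $\Rightarrow$ (iii'), we proceed in four steps.
\begin{enumerate}
\item By \Cref{thm_chain}, there is an lsc quasi-convex preference relation $\preceq$ with $T \subseteq \nop_\preceq$.
\item By \Cref{cor_rader_separable}, which applies since $\X$ is finite-dimensional and hence separable, $\preceq$ is represented by some $f \in \cQ_0(\X)$, and $\nop_\preceq = \nop f$.
\item Since $\nop f$ is cyclically quasi-monotone and $T$ is maximal, $T = \nop f$, so $\nop f$ is maximal cyclically quasi-monotone.
\item By (ii) $\Rightarrow$ (iii), $f$ is essentially non-satiated, so $f \in \cE_0(\X)$.
\end{enumerate}

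The main obstacle is checking real-valuedness in (ii) $\Rightarrow$ (iii), that is, confirming that adding $(y,0)$ at a point outside the domain yields a genuine extension. This holds because $\nop f(y) = \emptyset$, so the new pair is not already in the graph. The remaining steps are direct assemblies of earlier results.
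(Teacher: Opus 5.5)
Your proof is correct and follows the paper's argument essentially step by step. You use the same cycle (i)$\Rightarrow$(ii)$\Rightarrow$(iii)$\Rightarrow$(i), via \Cref{values_maximal_operators}, \Cref{characterization_essential_non-satiation}, the extension by $(y,0)$ and \Cref{lem_equality_cones}; for (ii')$\Rightarrow$(iii'), your steps 1--2 are exactly the proof of item~\labelcref{main_thm_existence} of \Cref{main_thm}, which the paper invokes directly, and you can delete the abandoned aside about attaching a nonzero $y^*$ at $y \notin \dom f$, since the pair $(y,0)$ alone does the job.
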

\begin{proof}
	If $\nop f$ is maximal quasi-monotone, then it is maximal cyclically quasi-monotone, since it is cyclically quasi-monotone (\Cref{convex_chain_operator_CQM}) and cyclically quasi-monotone operators are quasi-monotone. If $\nop f$ is maximal cyclically quasi-monotone, then $0 \in \nop f(x)$ for every $x \in \X$, since adding $(x,0)$ to $\nop f$ preserves cyclical quasi-monotonicity. Thus $f$ is real-valued, since $\nop f(x) = \emptyset$ whenever $f(x) = +\infty$. Moreover, $\nop f(x)$ is never a proper vector subspace of $\X^*$ by \Cref{values_maximal_operators}, which means that $f$ is essentially non-satiated by \Cref{characterization_essential_non-satiation}.

	Now assume that $f$ is real-valued and essentially non-satiated, and let $x \in \X$ and $x^* \in \X^*$ be such that $\nop f \cup \{(x,x^*)\}$ is quasi-monotone. Then $\pair{x^*}{y-x} \leq 0$ for every $y \in S^{\nop f}_1(x)$, i.e., $x^* \in (S^{\nop f}_1(x)-x)^\circ$. If $x$ is the minimizer of $f$, then $\nop f(x) = \X^*$ contains $x^*$. Otherwise, \Cref{lem_equality_cones} yields
	\[(S^{\nop f}_1(x)-x)^\circ = (\{f \leq f(x)\}-x)^\circ = \nop f(x),\]
	thus $x^* \in \nop f(x)$. We have shown that $\nop f$ is maximal quasi-monotone.
	
	Now let $T$ be a cyclically quasi-monotone operator. If $T$ is maximal quasi-monotone, then it is maximal cyclically quasi-monotone, since cyclically quasi-monotone operators are quasi-monotone. If $T$ is maximal cyclically quasi-monotone, then $T \subseteq \nop f$ for some $f \in \cQ_0(\X)$ by item~\labelcref{main_thm_existence} of \Cref{main_thm}, and $T = \nop f$ by maximality, since $\nop f$ is cyclically quasi-monotone (\Cref{convex_chain_operator_CQM}). Then $f \in \cE_0(\X)$ by the implication (ii) $\Rightarrow$ (iii). Finally, if $T = \nop f$ for some $f \in \cE_0(\X)$, then $T$ is maximal quasi-monotone by the implication (iii) $\Rightarrow$ (i), which ends the proof.
\end{proof}
The fact that the equality of the cones of \Cref{lem_equality_cones}, which we proved for essentially non-satiated functions, implies maximal quasi-monotonicity was already observed for general operators in \cite{buenoMaximalityQuasimonotoneOperators2019}.

\subsection{Proof of \texorpdfstring{\Cref{main_thm}}{Theorem A}}\label{sec_proof_thm_A}

We are now almost in position to prove \Cref{main_thm}, by assembling the results of the previous sections. It only remains to establish the nonuniqueness statement, which is the object of the following example.

\begin{xmp}[Nonuniqueness]\label{example_non_uniqueness}
	Consider on $\R$ the functions $f_1(x) \coloneqq \abs{x}$ and
	\[
		f_2(x) \coloneqq
		\begin{cases}
			-x & \text{if } x \leq 0,\\
			2x & \text{if } x \geq 0.
		\end{cases}
	\]
	Both belong to $\cE_0(\R)$: they are continuous and quasi-convex, and $0$ is their only local minimizer, which is a strict global minimizer. For $t \geq 0$, their sublevel sets are $\{f_1 \leq t\} = [-t, t]$ and $\{f_2 \leq t\} = [-t, t/2]$. Hence, for every $x \in \R$,
	\[
		\nop f_1(x) = \nop f_2(x) =
		\begin{cases}
			[0, +\infty\mathclose[ & \text{if } x > 0,\\
			\R & \text{if } x = 0,\\
			\mathopen]-\infty, 0] & \text{if } x < 0.
		\end{cases}
	\]
	By \Cref{thm_characterization_maximal}, the operator $T \coloneqq \nop f_1 = \nop f_2$ is maximal cyclically quasi-monotone, and it admits two potentials whose families of sublevel sets, $\{[-t, t]\}_{t \geq 0}$ and $\{[-t, t/2]\}_{t \geq 0}$, differ. In particular, $f_2$ is not of the form $\varphi \circ f_1$ with $\varphi$ increasing.
\end{xmp}

We now have all the ingredients for the proof of \Cref{main_thm}.

\begin{proof}[Proof of \Cref{main_thm}]
	\textit{Item \labelcref{main_thm_existence}.} If $T \subseteq \nop f$ for some quasi-convex function $f$, then $T$ is cyclically quasi-monotone by \Cref{convex_chain_operator_CQM}. Conversely, assume that $T$ is cyclically quasi-monotone. Since $\X$ is separable, it is smoothable, and \Cref{thm_chain} provides a lower semicontinuous quasi-convex preference relation $\preceq$ such that $T \subseteq \nop_\preceq$. By \Cref{cor_rader_separable}, $\preceq$ is represented by some $f \in \cQ_0(\X)$, so that $\nop_\preceq = \nop f$, hence $T \subseteq \nop f$.

	\textit{Items \labelcref{main_thm_maximal_CQM} and \labelcref{main_thm_maximal_QM}} follow from \Cref{thm_characterization_maximal}, and \textit{item \labelcref{main_thm_nonuniqueness}} from \Cref{example_non_uniqueness}.
\end{proof}

\section{Applications}\label{sec_applications}
In this section, we present two applications of \Cref{main_thm}: the first one to revealed preference theory in economics (\Cref{sec_revealed_preferences}), the second one to supremal optimal transport (\Cref{sec_supremal_OT}).

\subsection{Revealed preferences}\label{sec_revealed_preferences}
In revealed preference theory, one observes the choices of a consumer facing bundles of goods at various prices, and asks whether these choices are rational, in the sense that they can be explained by the maximization of a single hidden utility function. A \emph{data set} is a set $\mathcal D \subseteq \X \times \X^*$ of pairs $(x,p)$, where $x$ is a chosen bundle of goods and $p$ the corresponding price vector (classically, $\X = \R^d$ and bundles and prices are nonnegative). The quantity $\pair{p}{x}$ is the price paid for $x$. If $\pair{p}{y} \leq \pair{p}{x}$, then the bundle $y$ was affordable when $x$ was chosen, so that $x$ is \emph{directly revealed preferred} to $y$. If moreover $\pair{p}{y} < \pair{p}{x}$, then $x$ is \emph{strictly} directly revealed preferred to $y$.

A utility function $u \colon \X \to \R$ \emph{rationalizes} $\mathcal D$ if every chosen bundle maximizes $u$ among the affordable ones, that is,
\begin{equation}\label{classical_rationalization}
	\forall (x,p) \in \mathcal D,\ \forall y \in \X, \qquad \pair{p}{y} \leq \pair{p}{x} \implies u(y) \leq u(x).
\end{equation}
Every data set is rationalized by a constant function, so that this notion is only meaningful under an additional requirement on $u$, such as local non-satiation. Following Reny \cite[footnote~2]{renyCharacterizationRationalizableConsumer2015}, one may instead require strictly cheaper bundles to be strictly worse, which we call \emph{rationalization in the strict sense}:
\begin{equation}\label{strict_rationalization}
	\forall (x,p) \in \mathcal D,\ \forall y \in \X, \qquad \pair{p}{y} < \pair{p}{x} \implies u(y) < u(x),
\end{equation}
which rules out such trivial rationalizations. If $u$ is locally non-satiated, then \eqref{classical_rationalization} implies \eqref{strict_rationalization}: if $\pair{p}{y} < \pair{p}{x}$, then every $z$ close enough to $y$ is still affordable, and choosing such a $z$ with $u(z) > u(y)$ gives $u(y) < u(z) \leq u(x)$. Conversely, if $u$ is lower semicontinuous and all prices are nonzero, then \eqref{strict_rationalization} implies \eqref{classical_rationalization}: if $\pair{p}{y} = \pair{p}{x}$, then $y$ is the limit of strictly cheaper bundles $y_n$, so that $u(y) \leq \liminf_n u(y_n) \leq u(x)$. Reny requires both \eqref{classical_rationalization} and \eqref{strict_rationalization}, while the rationalization in the strict sense of \Cref{cor_afriat} only requires \eqref{strict_rationalization}.

Several axioms on $\mathcal D$ express that the observed choices are consistent, in the sense that they contain no contradictory cycle of revealed preferences. Let $(x_0,p_0), \ldots, (x_{N-1},p_{N-1})$ be elements of $\mathcal D$, set $x_N \coloneqq x_0$, and assume that they form a cycle of direct revealed preferences:
\begin{equation}\label{revealed_preference_cycle}
	\pair{p_k}{x_{k+1}} \leq \pair{p_k}{x_k} \qquad \text{for every } k < N.
\end{equation}
The data set $\mathcal D$ satisfies
\begin{itemize}
	\item the \emph{Weak Axiom of Revealed Preference} (WARP) \cite{samuelsonNotePureTheory1938} if \eqref{revealed_preference_cycle} is impossible for $N = 2$ and $x_0 \neq x_1$,
	\item the \emph{Strong Axiom of Revealed Preference} (SARP) \cite{houthakkerRevealedPreferenceUtility1950} if \eqref{revealed_preference_cycle} is impossible with $x_0, \ldots, x_{N-1}$ not all equal,
	\item the \emph{Generalized Axiom of Revealed Preference} (GARP) \cite{varianNonparametricApproachDemand1982} if \eqref{revealed_preference_cycle} is impossible with at least one strict inequality.
\end{itemize}
For instance, WARP means that if $(x,p), (y,q) \in \mathcal D$ with $x \neq y$ and $\pair{p}{y} \leq \pair{p}{x}$, then $\pair{q}{x} > \pair{q}{y}$. Indeed, $x$ is then revealed preferred to $y$, and since $y$ was nevertheless chosen at prices $q$, $x$ was not affordable with the budget $\pair{q}{y}$. SARP extends this requirement to cycles of any length, which accounts for the transitivity of preferences. GARP allows for indifference: it only forbids the cycles containing a strict revealed preference, and a cycle is allowed if every bundle costs exactly the budget of the previous one. By Afriat's theorem (\Cref{thm_afriat}), GARP characterizes the finite data sets with nonnegative bundles and positive prices that are rationalized by a locally non-satiated utility function. Note that SARP implies both WARP and GARP, which are independent in general.

The link with our problem appears on the operator $T(x) \coloneqq \{-p : (x,p) \in \mathcal D\}$, the opposite of the price operator. With $x^*_k = -p_k \in T(x_k)$, the cycle \eqref{revealed_preference_cycle} reads $\pair{x^*_k}{x_{k+1}-x_k} \geq 0$ for every $k < N$. Therefore GARP means that $T$ is \emph{cyclically pseudo-monotone} \cite{daniilidisSubdifferentialsQuasiconvexPseudoconvex1999}: no cycle satisfies $\pair{x^*_k}{x_{k+1}-x_k} \geq 0$ for every $k$, with at least one strict inequality. On the other hand, the cyclical quasi-monotonicity of $T$ means that there is no cycle of strict direct revealed preferences, that is, no cycle \eqref{revealed_preference_cycle} in which all the inequalities are strict. This is the \emph{Quasi Axiom of Revealed Preference} (QARP) of \Cref{cor_afriat}, which is weaker than GARP. Moreover, rationalization in the strict sense is exactly our integration problem for $T$. Indeed, setting $f \coloneqq -u$, the contrapositive of \eqref{strict_rationalization} reads
\begin{equation}\label{rationalization_normal_cone}
	\forall (x,p) \in \mathcal D,\ \forall y \in \X, \qquad f(y) \leq f(x) \implies \pair{-p}{y-x} \leq 0,
\end{equation}
that is, $-p \in \nop f(x)$. Thus $u$ rationalizes $\mathcal D$ in the strict sense if and only if $T \subseteq \nop f$. Through this correspondence, \Cref{cor_afriat} shows that QARP characterizes the data sets rationalized in the strict sense, for arbitrary data sets in separable Banach spaces.
\begin{proof}[Proof of \Cref{cor_afriat}]
	(i) $\Rightarrow$ (ii). Let $u$ rationalize $\mathcal D$ in the strict sense. If elements $(x_k,p_k)_{k<N}$ of $\mathcal D$ formed a cycle of strict direct revealed preferences, then $u(x_{k+1}) < u(x_k)$ for every $k < N$ by \eqref{strict_rationalization}, hence $u(x_0) = u(x_N) < u(x_0)$, a contradiction.

	(ii) $\Rightarrow$ (i). By (ii), the operator $T(x) = \{-p : (x,p) \in \mathcal D\}$ is cyclically quasi-monotone. By item~\labelcref{main_thm_existence} of \Cref{main_thm}, $T \subseteq \nop f$ for some $f \in \cQ_0(\X)$, so that $u \coloneqq -f$ is upper semicontinuous and quasi-concave, and rationalizes $\mathcal D$ in the strict sense by \eqref{rationalization_normal_cone}. If $\X$ is finite-dimensional, $T$ admits a maximal cyclically quasi-monotone extension by \Cref{existence_maximal_extension}, which is of the form $\nop f$ with $f \in \cE_0(\X)$ by item~\labelcref{main_thm_maximal_CQM} of \Cref{main_thm}. Then $u \coloneqq -f$ is moreover essentially non-satiated.
\end{proof}

\subsection{Supremal optimal transport}\label{sec_supremal_OT}
Let $\mu$ and $\nu$ be Borel probability measures on $\R^d$ with compact supports, and let $\Pi(\mu,\nu)$ be the set of \emph{transport plans} between them, that is, Borel probability measures on $\R^d \times \R^d$ with marginals $\mu$ and $\nu$. Given a continuous cost function $c \colon \R^d \times \R^d \to \R$, the classical (Kantorovich) optimal transport problem reads
\begin{equation}\label{OT_problem}
	\min_{\gamma \in \Pi(\mu,\nu)} \int c \, d\gamma.
\end{equation}
Typical costs are the Euclidean distance $c(x,y) = \abs{x-y}$ and, more commonly, its square $c(x,y) = \abs{x-y}^2$. In the latter case, expanding the square shows that \eqref{OT_problem} has the same minimizers as with the cost $c(x,y) = -\pair{x}{y}$, since the terms $\abs{x}^2$ and $\abs{y}^2$ integrate to constants. Optimal plans are characterized by the \emph{$c$-cyclical monotonicity} of their supports \cite[Chapter~1]{santambrogioOptimalTransportApplied2015}: a plan $\gamma \in \Pi(\mu,\nu)$ is optimal if and only if for every $(x_0,y_0), \ldots, (x_{N-1},y_{N-1}) \in \supp \gamma$, setting $x_N \coloneqq x_0$,
\[
	\sum_{i=0}^{N-1} c(x_i,y_i) \leq \sum_{i=0}^{N-1} c(x_{i+1},y_i).
\]
For $c(x,y) = -\pair{x}{y}$, this means that $\supp \gamma$, seen as the graph of an operator $T_\gamma \colon \R^d \multimap \R^d$, is cyclically monotone. By Rockafellar's theorem (\Cref{Rockafellar_Theorem}), $T_\gamma \subseteq \partial\varphi$ for some proper lower semicontinuous convex function $\varphi$, which prescribes the target of every point. This is the key step of Brenier's theorem \cite{brenierPolarFactorizationMonotone1991}.

The \emph{supremal}, or $L^\infty$, optimal transport problem consists in minimizing the worst cost instead of the average one:
\begin{equation}\label{Linfty_OT_problem}
	\min_{\gamma \in \Pi(\mu,\nu)} C_\infty(\gamma), \qquad \text{where } C_\infty(\gamma) \coloneqq \gamma\text{-}\operatorname*{ess\,sup} c = \max_{\supp \gamma} c.
\end{equation}
For $c(x,y) = \abs{x-y}$, its value is the $\infty$-Wasserstein distance between $\mu$ and $\nu$ \cite{championInftyWassersteinDistance2008}. Solutions exist, but are far from unique: an optimal plan can be modified away from the region where the maximal cost is reached, as long as the cost does not exceed this maximum. As for supremal functionals in the calculus of variations, where Aronsson introduced absolute minimizers \cite{aronssonMinimizationProblemsFunctional1965}, one selects the minimizers which are also optimal locally. A plan $\gamma \in \Pi(\mu,\nu)$ is a \emph{restrictable solution} of \eqref{Linfty_OT_problem} if, for every nonzero Borel measure $\gamma' \leq \gamma$, the probability measure $\gamma'/\gamma'(\R^d \times \R^d)$ is a solution of \eqref{Linfty_OT_problem} between its own marginals \cite{championInftyWassersteinDistance2008}. Restrictable solutions are characterized by the \emph{$\infty$-cyclical monotonicity} of their supports: for every $(x_0,y_0), \ldots, (x_{N-1},y_{N-1}) \in \supp \gamma$, setting $x_N \coloneqq x_0$,
\begin{equation}\label{infinite_cyclical_monotonicity}
	\max_{0 \leq i < N} c(x_i,y_i) \leq \max_{0 \leq i < N} c(x_{i+1},y_i).
\end{equation}
This was shown by Champion, De Pascale and Juutinen for $c(x,y) = \abs{x-y}$ \cite{championInftyWassersteinDistance2008}, and by Jylhä for more general costs, in particular continuous ones \cite{jylhaInftyOptimalTransport2015}. Moreover, if $c$ is nonnegative, restrictable solutions can be obtained as limits of solutions of the $L^p$ problems $\min_{\gamma \in \Pi(\mu,\nu)} \int c^p \, d\gamma$ as $p \to \infty$ \cite{championInftyWassersteinDistance2008,jylhaInftyOptimalTransport2015}.

The reduction from $\abs{x-y}^2$ to $-\pair{x}{y}$ no longer works for \eqref{Linfty_OT_problem}, since $C_\infty$ is not linear in $\gamma$, so that these two costs lead to different problems. For $c(x,y) = -\pair{x}{y}$, $\infty$-cyclical monotonicity implies cyclical quasi-monotonicity, as observed in \cite{depascaleRockafellarTheoremCyclically2025}.

\begin{lem}\label{infinite_CM_implies_CQM}
	Let $c(x,y) = -\pair{x}{y}$ and let $\Gamma \subseteq \R^d \times \R^d$ satisfy \eqref{infinite_cyclical_monotonicity} for every $(x_0,y_0), \ldots, (x_{N-1},y_{N-1}) \in \Gamma$. Then the operator $x \mapsto \{y \in \R^d : (x,y) \in \Gamma\}$ is cyclically quasi-monotone.
\end{lem}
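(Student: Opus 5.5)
We argue by contraposition, using the characterization \labelcref{CQM_no_ascending_cycle}: we assume that the operator $T_\Gamma \colon x \mapsto \{y : (x,y) \in \Gamma\}$ admits an ascending cycle, and we build from it a finite family of points of $\Gamma$ violating \eqref{infinite_cyclical_monotonicity}. So let $x_0, \ldots, x_N = x_0$ be points and $y_i \in T_\Gamma(x_i)$, i.e.\ $(x_i,y_i) \in \Gamma$, such that
\[\pair{y_i}{x_{i+1}-x_i} > 0 \qquad \text{for every } i < N.\]

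With $c(x,y) = -\pair{x}{y}$, this inequality reads $\pair{y_i}{x_{i+1}} > \pair{y_i}{x_i}$, that is,
\[c(x_{i+1},y_i) < c(x_i,y_i) \qquad \text{for every } i < N.\]
Every term on the right-hand side of \eqref{infinite_cyclical_monotonicity} is therefore strictly smaller than the corresponding term on the left-hand side. Since there are finitely many indices, the maximum over $i$ of $c(x_{i+1},y_i)$ is attained at some index $j$. Then
\[\max_{0 \leq i < N} c(x_{i+1},y_i) = c(x_{j+1},y_j) < c(x_j,y_j) \leq \max_{0 \leq i < N} c(x_i,y_i).\]
This contradicts \eqref{infinite_cyclical_monotonicity} applied to the family $(x_0,y_0), \ldots, (x_{N-1},y_{N-1}) \in \Gamma$ with $x_N = x_0$.

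Hence there is no $T_\Gamma$-ascending cycle, and $T_\Gamma$ is cyclically quasi-monotone by \labelcref{CQM_no_ascending_cycle}. There is no real obstacle here. The only points needing care are that the maximum of finitely many strict inequalities remains strict, and that the indexing convention $x_N = x_0$ in \eqref{infinite_cyclical_monotonicity} matches the cycle convention in \eqref{cyclical_quasi_monotonicity}. Both are immediate.
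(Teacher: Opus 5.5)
Your proof is correct and is essentially the paper's argument. The paper argues directly, writing \eqref{infinite_cyclical_monotonicity} as $\min_i \pair{y_i}{x_i} \geq \min_i \bigl(\pair{y_i}{x_i} + \pair{y_i}{x_{i+1}-x_i}\bigr)$ and bounding the right-hand side from below by a sum of minima, whereas you argue by contradiction at an index attaining the maximum, which is the same elementary observation.
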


\begin{proof}
	Let $(x_0,y_0), \ldots, (x_{N-1},y_{N-1}) \in \Gamma$ and set $x_N \coloneqq x_0$. Changing signs in \eqref{infinite_cyclical_monotonicity}, and since the minimum of a sum is at least the sum of the minima, we get
	\[
		\min_{0 \leq i < N} \pair{y_i}{x_i} \geq \min_{0 \leq i < N} \bigl(\pair{y_i}{x_i} + \pair{y_i}{x_{i+1}-x_i}\bigr) \geq \min_{0 \leq i < N} \pair{y_i}{x_i} + \min_{0 \leq i < N} \pair{y_i}{x_{i+1}-x_i}.
	\]
	Hence $\min_i \pair{y_i}{x_{i+1}-x_i} \leq 0$, which is \eqref{cyclical_quasi_monotonicity}.
\end{proof}

We obtain that every restrictable solution admits a nontrivial quasi-convex potential, which prescribes the direction of the target of every point.

\begin{thm}\label{thm_supremal_OT}
	Let $\mu$ and $\nu$ be Borel probability measures on $\R^d$ with compact supports, and let $c(x,y) = -\pair{x}{y}$. Then the following statements hold.
	\begin{enumerate}[(i)]
		\item The supremal optimal transport problem \eqref{Linfty_OT_problem} admits a restrictable solution.
		\item For every restrictable solution $\gamma$ of \eqref{Linfty_OT_problem}, there exists an essentially non-satiated function $f \in \cE_0(\R^d)$ such that
		\[
			y \in \nop f(x) \qquad \text{for every } (x,y) \in \supp \gamma.
		\]
	\end{enumerate}
\end{thm}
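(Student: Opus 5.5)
For item (i), I will not build a restrictable solution by hand. Instead I will use the existence results of Jylhä \cite{jylhaInftyOptimalTransport2015} (see also \cite{championInftyWassersteinDistance2008}). Those results are stated for nonnegative continuous costs, obtained as limits of $L^p$ minimizers. So the first step is to reduce to a nonnegative cost. The supports of $\mu$ and $\nu$ are compact, so $c(x,y) = -\pair{x}{y}$ is bounded on $\supp\mu \times \supp\nu$. Adding a constant $M$, I get a cost $\tilde c = c + M$ that is nonnegative and continuous there.

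Adding a constant changes none of the relevant structure. The problem \eqref{Linfty_OT_problem} keeps the same solutions, since $C_\infty$ is simply shifted by $M$. Restrictability is a statement about solutions between sub-marginals, so it is unchanged as well. The condition \eqref{infinite_cyclical_monotonicity} is also invariant, because both maxima shift by $M$. Hence Jylhä's theorem applied to $\tilde c$ (restricted to the compact product of the supports, and extended continuously if needed) yields a restrictable solution for $c$. The main point to check here is that the cited existence result really covers continuous bounded-below costs on compact sets via $\Gamma$-convergence of the $L^p$ problems. If it only needs lower semicontinuity and compactness, the shift is enough.

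For item (ii), let $\gamma$ be a restrictable solution. By the characterization of Champion--De Pascale--Juutinen and Jylhä, $\Gamma \coloneqq \supp\gamma$ satisfies \eqref{infinite_cyclical_monotonicity} for $c$. By \Cref{infinite_CM_implies_CQM}, the operator $T_\gamma(x) \coloneqq \{y : (x,y)\in\Gamma\}$ is then cyclically quasi-monotone on $\R^d$, where we identify $\R^d$ with its dual.

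Next I apply \Cref{existence_maximal_extension} to obtain a maximal cyclically quasi-monotone extension $\tilde T \supseteq T_\gamma$. By item~\labelcref{main_thm_maximal_CQM} of \Cref{main_thm}, this extension satisfies $\tilde T = \nop f$ for some $f \in \cE_0(\R^d)$. Therefore $y \in \nop f(x)$ for every $(x,y)\in\supp\gamma$.

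The only delicate step is the transfer of the characterization of restrictability by $\infty$-cyclical monotonicity to the cost $-\pair{x}{y}$. This cost is not nonnegative, so I will again pass through the shifted cost $\tilde c$ and use the invariance observed above.
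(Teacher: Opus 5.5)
Your proposal is correct and follows essentially the same route as the paper. For (i), you shift the cost by a constant so that it is nonnegative on $\supp\mu\times\supp\nu$; this leaves the solutions, the restrictable solutions and \eqref{infinite_cyclical_monotonicity} unchanged, so the $L^p$-limit existence results apply. For (ii), you combine the $\infty$-cyclical monotonicity of the support with \Cref{infinite_CM_implies_CQM}, \Cref{existence_maximal_extension} and item~\labelcref{main_thm_maximal_CQM} of \Cref{main_thm}. The only cosmetic difference is that you restrict and extend the shifted cost, whereas the paper simply notes that every plan in $\Pi(\mu,\nu)$ is concentrated on $\supp\mu\times\supp\nu$.
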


\begin{proof}
	(i) Since $\mu$ and $\nu$ have compact supports, there is a constant $K$ such that $c + K \geq 0$ on $\supp \mu \times \supp \nu$, on which every plan $\gamma \in \Pi(\mu,\nu)$ is concentrated. Replacing $c$ by $c + K$ adds $K$ to $C_\infty(\gamma)$ for every plan $\gamma$, hence does not change the solutions of \eqref{Linfty_OT_problem}, its restrictable solutions, nor the condition \eqref{infinite_cyclical_monotonicity}. For every $p \geq 1$, let $\gamma_p$ be a solution of $\min_{\gamma \in \Pi(\mu,\nu)} \int (c+K)^p \, d\gamma$. Since $\Pi(\mu,\nu)$ is weakly compact, a subsequence of $(\gamma_p)$ converges weakly to some $\gamma \in \Pi(\mu,\nu)$ as $p \to \infty$, and $\gamma$ is a restrictable solution of \eqref{Linfty_OT_problem} \cite{championInftyWassersteinDistance2008,jylhaInftyOptimalTransport2015}.

	(ii) Let $\gamma$ be a restrictable solution of \eqref{Linfty_OT_problem}. Its support satisfies \eqref{infinite_cyclical_monotonicity} \cite{championInftyWassersteinDistance2008,jylhaInftyOptimalTransport2015}, so the operator $T_\gamma$ whose graph is $\supp \gamma$ is cyclically quasi-monotone by \Cref{infinite_CM_implies_CQM}. By \Cref{existence_maximal_extension} and item~\labelcref{main_thm_maximal_CQM} of \Cref{main_thm}, $T_\gamma \subseteq \nop f$ for some $f \in \cE_0(\R^d)$. Hence $y \in T_\gamma(x) \subseteq \nop f(x)$ for every $(x,y) \in \supp \gamma$.
\end{proof}

\section*{Acknowledgments}
Part of this work was carried out during visits of the first author to the CEREMADE laboratory of Université Paris Dauphine--PSL and the MOKAPLAN team of Inria Paris, and during a visit of the second author to the Dipartimento di Matematica e Informatica \enquote{Ulisse Dini} of the Università di Firenze in the fall of 2025. The authors gratefully acknowledge the warm hospitality and support of these institutions.

The work of the first author is partially funded by the \textit{Fondi di ricerca di ateneo} of the Università di Firenze and by the European Union -- NextGenerationEU (Missione 4, Componente 2, Investimento 1.1, \textit{Progetti di Ricerca di Rilevante Interesse Nazionale} (PRIN), CUP B53D23009310006, project 2022J4FYNJ). The first author is a member of the research group GNAMPA of INdAM.

Claude (Anthropic) was used to assist with the revision of a first draft of this manuscript, including literature search, editing, exposition and reviewing. The mathematical results and proofs are the authors' own.

\printbibliography
\end{document}